\title{Equivariant perverse sheaves on Coxeter \\ arrangements and buildings}
\author{Martin H. Weissman}
\institution{Department of Mathematics, University of California, Santa Cruz, CA 95064, USA}\\
\email{weissman@ucsc.edu}}
\date{\vspace{-5ex}} 
\journal{\'Epijournal de G\'eom\'etrie Alg\'ebrique} 
\renewcommand\thesubsection{{\thesection}\@arabic\c@subsection.}
\renewcommand{\p@subsection}{\arabic{section}.\arabic{subsection}\expandafter\@gobble}
\numberwithin{equation}{subsection}
\renewcommand{\p@subsection}{\arabic{section}.\arabic{subsection}.\arabic{equation}\expandafter\@gobble}
\newtheorem{thm}{Theorem}[subsection]
\newtheorem{lemma}[thm]{Lemma}
\newtheorem{proposition}[thm]{Proposition}
\newtheorem{tdefinition}[thm]{Definition}
\newenvironment{definition}{\begin{tdefinition} \rm}{\end{tdefinition}}
\newtheorem{tremark}[thm]{Remark}
\newenvironment{remark}{\begin{tremark} \rm}{\end{tremark}}
\setlist[enumerate,1]{label={\rm (\arabic*)}}
\DeclareMathAlphabet{\mathcalligra}{T1}{calligra}{m}{n}
\DeclareMathOperator{\Vol}{vol}
\DeclareMathOperator{\fd}{{fd}}
\DeclareMathOperator{\Star}{{Star}}
\DeclareMathOperator{\mon}{mon}
\DeclareMathOperator{\trans}{tran}
\DeclareMathOperator{\sm}{sm}
\DeclareMathOperator{\Aut}{Aut}
\DeclareMathOperator{\End}{End}
\DeclareMathOperator{\Ker}{Ker}
\DeclareMathOperator{\Id}{Id}
\DeclareMathOperator{\Res}{Res}
\DeclareMathOperator{\Inc}{Inc}
\newcommand{\From}{\colon}
\newcommand{\inar}{\ar@{^{(}->}}
\newcommand{\onar}{\ar@{->>}}
\newcommand{\defined}[1]{\underline{{#1}}}
\newlength{\dtildeheight}
\newcommand{\Facet}{\mathcal{F}}
\newcommand{\Hyper}{\mathcal{H}}
\newcommand{\Apart}{\mathcal{A}}
\newcommand{\Build}{\mathcal{B}}
\newcommand{\mm}{\mathfrak{m}}
\newcommand{\Matrix}[4]{ \left( \begin{array}{cc}  #1 & #2 \\  #3 & #4 \\ \end{array} \right) }
\newcommand{\raisemath}[1]{\mathpalette{\raisem@th{#1}}}
\newcommand{\raisem@th}[3]{\raisebox{#1}{$#2#3$}}
\newcommand{\Cat}[1]{ {\mathsf{#1}} }
\newcommand{\Lie}[1]{ {\mathfrak{#1}} }
\newcommand{\sch}[1]{\underline{\boldsymbol{ \mathrm{#1}}}}
\newcommand{\alg}[1]{\boldsymbol{\mathrm{#1}}}
\newcommand{\sheaf}[1]{{\mathscr{#1}}}
\newcommand{\RR}{\mathbb R}
\newcommand{\CC}{\mathbb C}
\renewcommand{\AA}{\mathcal A}
\newcommand{\FF}{\mathbb F}
\newcommand{\OO}{\mathcal{O}}
\newcommand{\Into}{\hookrightarrow}
\newcommand{\Onto}{\twoheadrightarrow}
\newcommand{\To}{\rightarrow}
\newcommand{\e}{\textbf{e}}
\newcommand{\isom}{\cong}
\newcommand\@biprod[1]{%
  \vcenter{\hbox{\ooalign{$#1\prod$\cr$#1\coprod$\cr}}}}
\newcommand\biprod{\mathop{\mathpalette\@biprod\relax}\displaylimits}
\newcommand{\defeq}{:=}
\DeclareMathAlphabet{\mathcalligra}{T1}{calligra}{m}{n}
\begin{document}


\maketitle



\begin{prelims}

\vspace{-0.55cm}

\def\abstractname{Abstract}
\abstract{When $W$ is a finite Coxeter group acting by its reflection representation on $E$, we describe the category $\Cat{Perv}_W(E_\CC, \Hyper_\CC)$ of $W$-equivariant perverse sheaves on $E_\CC$, smooth with respect to the stratification by reflection hyperplanes.  By using Kapranov and Schechtman's recent analysis of perverse sheaves on hyperplane arrangements, we find an equivalence of categories from $\Cat{Perv}_W(E_\CC, \Hyper_\CC)$ to a category of finite-dimensional modules over an algebra given by explicit generators and relations.

We also define categories of equivariant perverse sheaves on affine buildings, e.g., $G$-equivariant perverse sheaves on the Bruhat--Tits building of a $p$-adic group $G$.  In this setting, we find that a construction of Schneider and Stuhler gives equivariant perverse sheaves associated to depth zero representations.}

\keywords{Perverse; Coxeter; building; $p$-adic}

\MSCclass{20F55 (primary); 32S22; 11F70}


\languagesection{Fran\c{c}ais}{%

\vspace{-0.05cm}
\textbf{Titre. Faisceaux pervers \'equivariants sur les immeubles et arrangements de Coxeter} \commentskip \textbf{R\'esum\'e.} Lorsque $W$ est un groupe de Coxeter fini agissant par r\'eflexions sur $E$, nous d\'ecrivons la cat\'egorie $\Cat{Perv}_W(E_\CC, \Hyper_\CC)$ des faisceaux pervers $W$-\'equivariants sur $E_\CC$, lisses par rapport \`a la stratification par les hyperplans de r\'eflexion. En utilisant l'analyse r\'ecente de Kapranov et Schechtman des faisceaux pervers sur les arrangements d'hyperplans, nous donnons une \'equivalence de cat\'egorie entre $\Cat{Perv}_W(E_\CC, \Hyper_\CC)$ et une cat\'egorie de modules de dimension finie sur une alg\`ebre donn\'ee explicitement par g\'en\'erateurs et relations. 

Nous d\'efinissons \'egalement des cat\'egories de faisceaux pervers \'equivariants sur les immeubles affines, par exemple celle des faisceaux pervers $G$-\'equivariants sur l'immeuble de Bruhat--Tits d'un groupe $p$-adique $G$. Dans ce cadre, nous montrons qu'une construction de Schneider et Stuhler fournit des faisceaux pervers \'equivariants associ\'es \`a des repr\'esentations de profondeur~nulle.}

\end{prelims}


\newpage

\setcounter{tocdepth}{1} \tableofcontents

\section*{Introduction}

In their recent paper \cite{KapSch}, Kapranov and Schechtman give a new description of the category of perverse sheaves on a complex affine space, smooth with respect to a hyperplane arrangement, when the hyperplane arrangement arises as the complexification of a real hyperplane arrangement.  Their description is explicit, in terms of the facets arising from the real hyperplane arrangement.  

An important example is given by the arrangement of reflection hyperplanes, for a finite (or affine) Coxeter group $W$.  In this setting, we have not only a hyperplane arrangement, but a $W$-equivariant hyperplane arrangement; thus we may study the category of $W$-equivariant perverse sheaves.  Theorem \ref{mainth1} of this paper applies the results of \cite{KapSch} to give an explicit description of this category, as a category of modules over a finitely-presented algebra.  This is closely related to remarks of \cite[\S 4.B]{KapSch2}, but while they discuss perverse sheaves on a geometric quotient $\Lie{h}/W$, we consider $W$-equivariant perverse sheaves on $\Lie{h}$; effectively we work on the stacky quotient $[\Lie{h} / W]$, which carries a bit more information.  Moreover, in loc.~cit., Kapranov and Schechtman state that ``A complete quiver description of $\mathrm{Perv}(\Lie{h} / W)$ is not yet available.''  So in this way, the current paper addresses a gap identified by Kapranov and Schechtman.

Our pursuit of this topic began as an effort to revisit the work of Schneider and Stuhler \cite{SS}, who study equivariant sheaves and cosheaves (coefficient systems) on the Bruhat--Tits building of a $p$-adic group.  In their introduction, they write
\begin{quote}
the latter objects [certain coefficient systems] constitute something which one might call perverse sheaves on the building $X$.  From this point of view our constructions bear a certain resemblance to the Beilinson-Bernstein localization theory from Lie algebra representations to perverse sheaves on the flag manifold.''
\end{quote}
In this way, the vision of Schneider and Stuhler is to study something like perverse sheaves on the Bruhat--Tits building.

After Kapranov and Schechtman \cite{KapSch}, there seems to be a natural way to advance Schneider and Stuhler's vision.  For the Bruhat--Tits building is a union of (affine) hyperplane arrangements, glued along facets.  While the Bruhat--Tits building itself does not seem to admit a complexification, a combinatorial notion of perverse sheaf from Kapranov and Schechtman (a monotonic, transitive, invertible bisheaf) generalizes easily enough from a single hyperplane arrangement to a complex thereof (such as an affine building).

We conclude this paper with Theorem \ref{depthzeroPS}, giving an exact functor from the category of depth-zero smooth representations of a $p$-adic group to the category of $G$-equivariant perverse sheaves on the Bruhat--Tits building.  We hope this is the first step in a program of studying $G$-equivariant perverse sheaves on the building.  Constructing a faithful functor from higher-depth representations to perverse sheaves seems difficult, and closely related to theories of minimal K-types and Hecke algebras for $p$-adic groups.

\subsection*{Acknowledgments}

I  thank Jessica Fintzen for a close reading, valuable edits, and crucial insights on Moy-Prasad filtrations -- our discussions were most helpful in the later sections of this paper.  Nicholas Proudfoot provided advice on perverse sheaves and hyperplane arrangements.  I appreciate the expert feedback of Geordie Williamson and Mikhail Kapranov, who informed me about the current state of the art.  I thank Asilata Bapat for pointing out an essential error in the main theorem, in my reformulation of invertibility -- her observation and explicit counterexample allowed me to fix this error.

This paper was completed at the Weizmann Institute of Science, and I appreciate their hospitality during a summer visit.  Collaborations related to this paper were supported by a grant from the Simons Foundation (\#426453).

\section{Bisheaves}

We begin with an abstract treatment of bisheaves and equivariant bisheaves on posets.  The results of this section are somewhat tedious exercises in diagram chasing, but seem necessary for what comes later.  

\subsection{Bisheaves on a poset}

Let $(\Facet, \leq)$ be a partially ordered set (poset).  Elements of $\Facet$ will be called facets, to be consistent with what comes later.  Let $\Omega$ be a category -- we call it the \defined{category of coefficients}.  A common category of coefficients is the category $\Cat{Vec}_\CC^{\fd}$ of finite-dimensional complex vector spaces and $\CC$-linear maps.

\begin{definition}
A $\Omega$-valued \defined{bisheaf} $(\sheaf{V}, \gamma, \delta)$ on $\Facet$ consists of the following data:
\begin{itemize}
\item
For each facet $F \in \Facet$, an object $\sheaf{V}_F \in \Omega$;
\item
For each $F_1 \leq F_2$, a morphism (called an \defined{upper}) $\gamma_{F_1, F_2} \From \sheaf{V}_{F_1} \To \sheaf{V}_{F_2}$;
\item
For each $F_1 \leq F_2$, a morphism (called a \defined{downer}) $\delta_{F_2, F_1} \From \sheaf{V}_{F_2} \To \sheaf{V}_{F_1}$.
\end{itemize}
To be a bisheaf, we require the following axioms:  if $F_1 \leq F_2 \leq F_3$, then 
$$\gamma_{F_2, F_3} \circ \gamma_{F_1, F_2} = \gamma_{F_1, F_3} \text{ and } \delta_{F_2, F_1} \circ \delta_{F_3, F_2} = \delta_{F_3, F_1}.$$
Furthermore, $\delta_{F,F} = \gamma_{F,F} = \Id_{\sheaf{V}_F}$ for all $F \in \Facet$.
\end{definition}

Define $\Cat{BiSh}(\Facet, \Omega)$ to be the category whose objects are bisheaves, and whose morphisms are those families of morphisms that intertwine uppers with uppers and downers with downers accordingly.  

Bisheaves on $\Facet$ can be viewed as functors in the following way:  begin with the directed graph with vertex-set $\Facet$ and two arrows $F_1 \xrightarrow{\gamma} F_2$, $F_2 \xrightarrow{\delta} F_1$ for every relation of the form $F_1 \leq F_2$.  When $F_1 \leq F_2$, we call the arrow $F_1 \xrightarrow{\gamma} F_2$ an \defined{up-arrow}, and $F_2 \xrightarrow{\delta} F_1$ a \defined{down-arrow} (so there is both an up-arrow and a down-arrow from every facet $F$ to itself).

Let $\Cat{Fac}$ be the category with object set $\Facet$, with morphisms generated by this directed graph, modulo the relations:
\begin{enumerate}
\item[(A)]
$F \xrightarrow{\gamma} F = F \xrightarrow{\delta} F = \Id_F$ for all $F \in \Facet$.
\item[(B)]
$(F_2 \xrightarrow{\gamma} F_3) \circ (F_1 \xrightarrow{\gamma} F_2) = (F_1 \xrightarrow{\gamma} F_3)$, for all $F_1 \leq F_2 \leq F_3 \in \Facet$.
\item[(C)]
$(F_2 \xrightarrow{\delta} F_1) \circ (F_3 \xrightarrow{\delta} F_2) = (F_3 \xrightarrow{\delta} F_1)$, for all $F_1 \leq F_2 \leq F_3 \in \Facet$.
\end{enumerate}
We refer to MacLane's text \cite[Chapter II.8]{Mac} for more detail on the construction of categories by generators and relations.  

By construction, the category $\Cat{BiSh}(\Facet, \Omega)$ of $\Omega$-valued bisheaves on $\Facet$ is equivalent (isomorphic, in fact) to the category of functors $\Cat{Fun}(\Cat{Fac}, \Omega)$ from $\Cat{Fac}$ to $\Omega$.  Thus we view bisheaves as functors when convenient.  From this perspective, it follows that if $\Omega$ happens to be an abelian category, then $\Cat{BiSh}(\Facet, \Omega)$ inherits the structure of an abelian category as well.

\subsection{Equivariant structures}

Now we consider equivariant bisheaves.  Suppose that a group $G$ acts on the poset $(\Facet, \leq)$.  Thus if $F_1 \leq F_2$ and $g \in G$, then $g F_1 \leq g F_2$.

If $(\sheaf{V}, \gamma, \delta)$ is a $\Omega$-valued bisheaf on $(\Facet, \leq)$, then a \defined{$G$-equivariant structure} consists of a family $\eta$ of isomorphisms
$$\eta_{g,F} \From \sheaf{V}_F \To \sheaf{V}_{gF} \text{ for all } g \in G, F \in \Facet,$$
such that
\begin{enumerate}
\item[(Eq1)]
$\eta_{1,F} = \Id_{\sheaf{V}_F}$ for all $F \in \Facet$;
\item[(Eq2)]
For all $g,h \in G$ and all $F \in \Facet$, $\eta_{g, hF} \circ \eta_{h,F} = \eta_{gh,F} \From \sheaf{V}_F \To \sheaf{V}_{gh F}$;
\item[(Eq3)]
For all $F_1 \leq F_2$, and all $g \in G$,
$$\eta_{g,F_2} \circ \gamma_{F_1, F_2} = \gamma_{g F_1, g F_2} \circ \eta_{g,F_1};$$
$$\eta_{g,F_1} \circ \delta_{F_2, F_1} = \delta_{g F_2, gF_1} \circ \eta_{g,F_2}.$$
\end{enumerate}

Let $\Cat{BiSh}_G(\Facet, \Omega)$ be the resulting category of \defined{$G$-equivariant $\Omega$-valued bisheaves} on $(\Facet, \leq)$ and $G$-intertwining morphisms of bisheaves.  

The category of $G$-equivariant bisheaves can also be realized as a functor category.  Indeed, consider the directed graph with vertex set $\Facet$, and three types of arrows:  the up-arrows $\xrightarrow{\gamma}$ and down-arrows $\xrightarrow{\delta}$ from before, and a new set of arrows called ``$G$-arrows'' $F \xrightarrow{g} gF$ for every $F \in \Facet$ and every $g \in G$.  Let $\Cat{Fac}_G$ be the category with object set $\Facet$, with morphisms generated by this directed graph, modulo the previous relations (A-C) for up- and down-arrows, together with:
\begin{enumerate}
\item[(D)] $(F \xrightarrow{1} F) = \Id_F$ for all $F \in \Facet$;
\item[(E)] $(h F \xrightarrow{g} g h F) \circ (F \xrightarrow{h} hF) = (F \xrightarrow{gh} gh F)$ for all $g,h \in G$, $F \in \Facet$;
\item[(F)] $(F_2 \xrightarrow{g} g F_2)  \circ (F_1 \xrightarrow{\gamma} F_2) = (g F_1 \xrightarrow{\gamma} g F_2) \circ (F_1 \xrightarrow{g} g F_1)$ for all $g \in G$, $F_1 \leq F_2 \in \Facet$;
\item[(G)] $(F_1 \xrightarrow{g} g F_1) \circ (F_2 \xrightarrow{\delta} F_1) = (g F_2 \xrightarrow{\delta} g F_1) \circ (F_2 \xrightarrow{g} g F_2)$ for all $g \in G$, $F_1 \leq F_2 \in \Facet$.
\end{enumerate}

By construction, the category $\Cat{BiSh}_G(\Facet, \Omega)$ is isomorphic to the the category of functors $\Cat{Fun}(\Cat{Fac}_G, \Omega)$.  Thus, if $\Omega$ is an abelian category, then $\Cat{BiSh}_G(\Facet, \Omega)$ inherits the structure of an abelian category.

\subsection{Restriction of equivariant bisheaves}

Suppose that $G$ acts on $(\Facet, \leq)$ as before.  Let $\Facet^+$ be a subposet of $\Facet$, i.e., a subset with the inherited partial order.  
\begin{definition}
A $\Omega$-valued $G$-equivariant bisheaf on $\Facet^+$ is a bisheaf $(\sheaf{V}, \gamma, \delta)$ on the partially ordered set $\Facet^+$, together with a family of maps $\eta_{g, F} \From \sheaf{V}_F \To \sheaf{V}_{gF}$ indexed by pairs $g \in G, F \in \Facet^+$ for which $gF \in \Facet^+$, satisfying the equivariance axioms (Eq1), (Eq2), (Eq3) in {\em all cases where the maps are defined (within $\Facet^+$)}.

Write $\Cat{BiSh}_G(\Facet^+, \Omega)$ for the resulting category of $G$-equivariant bisheaves on $\Facet^+$.  
\end{definition}

If $(\sheaf{V}, \gamma, \delta, \eta)$ is a $G$-equivariant bisheaf on $\Facet$, and $\Facet^+$ is a subposet, restriction yields a $G$-equivariant bisheaf on $\Facet^+$.  This gives a functor,
$$\Res \From \Cat{BiSh}_G(\Facet, \Omega) \To \Cat{BiSh}_G(\Facet^+, \Omega).$$


One may give a functorial interpretation of $G$-equivariant bisheaves on $\Facet^+$, in the setting above.  For this, consider the graph whose vertex set is $\Facet^+$, with up-arrows $F \xrightarrow{\gamma} F'$ and down-arrows $F' \xrightarrow{\delta} F$ for all $F \leq F'$ (when $F,F' \in \Facet^+$), and with ``$G$-arrows'' $F \xrightarrow{\eta_g} gF$ for all $g \in G$, $F \in \Facet^+$ (when $F, gF \in \Facet^+$).  Let $\Cat{Fac}_G^+$ to be the category whose object set is $\Facet^+$, generated by this directed graph modulo the relations (A) -- (G) whenever the facets are elements of $\Facet^+$.

By construction, there is an isomorphism of categories,
$$\Cat{BiSh}_G(\Facet^+, \Omega) \xrightarrow{\sim} \Cat{Fun}(\Cat{Fac}_G^+, \Omega).$$

There is an obvious functor from $\Cat{Fac}_G^+ \To \Cat{Fac}_G$.  The objects of $\Cat{Fac}_G^+$ form a subset of the objects of $\Cat{Fac}_G$.  The generating set of morphisms of $\Cat{Fac}_G^+$ is a subset of the generating set of morphisms of $\Cat{Fac}_G^+$, and the relations are a subset of the relations.  But since the relations of $\Cat{Fac}_G^+$ are, a priori, a subset of those arising from $\Cat{Fac}_G$, it is not clear whether $\Cat{Fac}_G^+$ is a subcategory of $\Cat{Fac}_G$.  I.e., faithfulness is not a consequence of the construction.

But through the functor $\Cat{Fac}_G^+ \To \Cat{Fac}_G$, restriction of $G$-equivariant bisheaves from $\Facet$ to $\Facet^+$ corresponds to pullback of functors,
$$\Res \From \Cat{Fun}(\Cat{Fac}_G, \Omega) \To \Cat{Fun}(\Cat{Fac}_G^+, \Omega).$$

In general, this restriction is not very well-behaved.  But when $\Facet^+$ carries enough information from $\Facet$, restriction does not lose much information.  
\begin{definition}
A subset $\Facet^+ \subset \Facet$ is called a \defined{$G$-full subposet} if it satisfies the following conditions.
\begin{enumerate}
\item
(closedness):  if $F,F' \in \Facet$, $F \leq F'$, and $F' \in \Facet^+$, then $F \in \Facet^+$.
\item
($G$-fullness)  If $F \in \Facet$, then there exists $g \in G$ such that $g F \in \Facet^+$.
\end{enumerate}
\end{definition}

\begin{thm}
\label{restful}
Suppose that $G$ acts on $(\Facet, \leq)$, and $\Facet^+$ is a $G$-full subposet of $\Facet$.  Then restriction gives an equivalence of categories,
$$\Cat{BiSh}_G(\Facet, \Omega) \xrightarrow{\sim} \Cat{BiSh}_G(\Facet^+, \Omega).$$
\end{thm}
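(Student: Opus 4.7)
The plan is to construct an explicit quasi-inverse $E \From \Cat{BiSh}_G(\Facet^+, \Omega) \To \Cat{BiSh}_G(\Facet, \Omega)$ to $\Res$. Invoking the axiom of choice, pick for each $F \in \Facet$ an element $g_F \in G$ with $g_F F \in \Facet^+$, normalized so that $g_F = 1$ when $F \in \Facet^+$ already. Given $(\sheaf{V}^+, \gamma^+, \delta^+, \eta^+) \in \Cat{BiSh}_G(\Facet^+, \Omega)$, define $E(\sheaf{V}^+)_F \defeq \sheaf{V}^+_{g_F F}$ on objects, and on a morphism $f^+$ set $E(f^+)_F \defeq f^+_{g_F F}$.

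The structure maps on $E(\sheaf{V}^+)$ are then forced by transport. The closedness hypothesis guarantees that when $F_1 \leq F_2$ the intermediate facet $g_{F_2} F_1$ lies in $\Facet^+$ (it is dominated by $g_{F_2} F_2 \in \Facet^+$), so one may define
\begin{align*}
\gamma_{F_1, F_2} &\defeq \gamma^+_{g_{F_2} F_1,\, g_{F_2} F_2} \circ \eta^+_{g_{F_2} g_{F_1}^{-1},\, g_{F_1} F_1}, \\
\delta_{F_2, F_1} &\defeq \eta^+_{g_{F_1} g_{F_2}^{-1},\, g_{F_2} F_1} \circ \delta^+_{g_{F_2} F_2,\, g_{F_2} F_1}, \\
\eta_{g, F} &\defeq \eta^+_{g_{gF}\, g\, g_F^{-1},\, g_F F}.
\end{align*}
With the normalization $g_F = 1$ for $F \in \Facet^+$, axiom (Eq1) collapses these formulas back to the original structure on $\Facet^+$, yielding $\Res \circ E = \Id$ strictly.

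The remaining content is axiomatic bookkeeping. To check transitivity $\gamma_{F_2, F_3} \circ \gamma_{F_1, F_2} = \gamma_{F_1, F_3}$, one applies (Eq3) for $\eta^+$ and $\gamma^+$ to commute the middle $\eta^+$ past a $\gamma^+$, then transitivity of $\gamma^+$, then (Eq2) to merge the resulting pair of $\eta^+$'s into a single $\eta^+$; the remaining axioms (A)--(C), (Eq1)--(Eq3) for $E(\sheaf{V}^+)$ follow from the same template. Functoriality of $E$ on morphisms is immediate, since $f^+$ commutes with each of $\gamma^+, \delta^+, \eta^+$ separately. Finally, for $E \circ \Res \cong \Id$, given $(\sheaf{V}, \gamma, \delta, \eta)$ on $\Facet$, the family $\eta_{g_F, F} \From \sheaf{V}_F \To \sheaf{V}_{g_F F} = (E \circ \Res)(\sheaf{V})_F$ is the required natural isomorphism of $G$-equivariant bisheaves; its naturality with respect to $\gamma$ reduces, via (Eq2) in the form $\eta_{g_{F_2} g_{F_1}^{-1},\, g_{F_1} F_1} \circ \eta_{g_{F_1}, F_1} = \eta_{g_{F_2}, F_1}$, directly to an instance of (Eq3) for $\sheaf{V}$.

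The expected obstacle is not conceptual but combinatorial: each axiom check produces a wall of subscripts that simplifies only after systematic use of (Eq2) to collapse $\eta^+$-chains and (Eq3) to commute $\eta^+$ past $\gamma^+$ and $\delta^+$. The two hypotheses on $\Facet^+$ play complementary roles: $G$-fullness supplies the $g_F$'s, accounting for essential surjectivity of $\Res$ on objects, while closedness is precisely what keeps the intermediate facet $g_{F_2} F_1$ inside $\Facet^+$ so that the composite definitions above make sense.
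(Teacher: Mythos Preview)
Your proposal is correct and follows essentially the same approach as the paper: the same choice of translating elements $g_F$ (called $n_F$ there, with the same normalization $n_F = 1$ for $F \in \Facet^+$), the identical transport formulas for $\gamma$, $\delta$, and $\eta$, and the same natural isomorphism $\eta_{g_F,F}$ witnessing $E \circ \Res \cong \Id$. The only difference is packaging: the paper verifies the relations once at the level of the indexing categories (constructing an inverse $S \From \Cat{Fac}_G \To \Cat{Fac}_G^+$ to the inclusion), whereas you carry out the equivalent computations directly on bisheaves.
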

\begin{proof}
It is easier to prove this theorem by viewing both categories as functor categories.  From this perspective, restriction is given by pullback of functors,
$$\Res \From \Cat{Fun}(\Cat{Fac}_G, \Omega) \To \Cat{Fun}(\Cat{Fac}_G^+, \Omega).$$
To show that $\Res$ is an equivalence, we demonstrate that the inclusion functor $\Inc \From \Cat{Fac}_G^+ \To \Cat{Fac}_G$ is an equivalence.  And for this, we construct an inverse functor,
$$S \From \Cat{Fac}_G \To \Cat{Fac}_G^+.$$

The construction depends on the choice, for every $F \in \Facet$, of an element $n_F \in G$ such that $n_F F \in \Facet^+$.  Such an element $n_F$ is not uniquely determined by $F$, but we write $F^+ = n_F F$ in what follows.  When $F \in \Facet^+$ already, we choose $n_F = 1$.  

The functor $S$ is defined on objects by $S(F) = F^+$.  For morphisms, we must be more careful.
\begin{enumerate}
\item
If $F_1 \leq F_2$, write $n_1 = n_{F_1}$ and $n_2 = n_{F_2}$ for the moment.  Then $n_2 F_1 \leq n_2 F_2$, and thus $n_2 F_1 \in \Facet^+$ (by closedness).  Therefore $F_1^+ \xrightarrow{n_2 n_1^{-1}} n_2 F_1$ is a $G$-morphism in $\Cat{Fac}_G^+$.  Define the functor $S$ on up- and down-arrows by
$$S( F_1 \xrightarrow{\gamma} F_2 ) = (n_2 F_1 \xrightarrow{\gamma} F_2^+) \circ (F_1^+ \xrightarrow{n_2 n_1^{-1}} n_2 F_1).$$
$$S( F_2 \xrightarrow{\delta} F_1 ) = (n_2 F_1 \xrightarrow{n_1 n_2^{-1}} F_1^+) \circ (F_2^+ \xrightarrow{\delta} n_2 F_1).$$
\item
If $F \in \Facet$ and $g \in G$, then $F^+ = n_F F \in \Facet^+$ and $(gF)^+ = n_{(gF)} gF \in \Facet^+$.  Define the functor $S$ on $G$-arrows by
$$S(F \xrightarrow{g} gF) = (F^+ \xrightarrow{n_{gF} \cdot g \cdot n_F^{-1}}  (gF)^+).$$
\end{enumerate}

In this way, $S$ defines a functor from the free category on the directed graph with vertex set $\Facet$, up-arrows, down-arrows, and $G$-arrows, to the category $\Cat{Fac}_G^+$.  To see that this defines a functor from $\Cat{Fac}_G$ to $\Cat{Fac}_G^+$, we must check that relations (A)-(G) are respected.

To begin, if $F_1 \leq F_2 \leq F_3$, and the notation is as above,
\begin{align*}
S(F_2 \xrightarrow{\gamma} F_3) \circ S(F_1 \xrightarrow{\gamma} F_2) &= (n_3 F_2 \xrightarrow{\gamma} F_3^+) \circ (F_2^+ \xrightarrow{n_3 n_2^{-1}} n_3 F_2) \\
& \phantom{=} \circ (n_2 F_1 \xrightarrow{\gamma} F_2^+) \circ (F_1^+ \xrightarrow{n_2 n_1^{-1}} n_2 F_1), \\
&= (n_3 F_2 \xrightarrow{\gamma} F_3^+) \circ  (n_3 F_1 \xrightarrow{\gamma} n_3 F_2)  \\
& \phantom{=} \circ (n_2 F_1 \xrightarrow{n_3 n_2^{-1}} n_3 F_1) \circ (F_1^+ \xrightarrow{n_2 n_1^{-1}} n_2 F_1), \quad \text{(by F)} \\
&= (n_3 F_1 \xrightarrow{\gamma} F_3^+) \circ (F_1^+ \xrightarrow{n_3 n_1^{-1}} n_3 F_1), \quad \text{ (by B, E)} \\
&= S(F_1 \xrightarrow{\gamma} F_3).
\end{align*}
Also, we have
\begin{align*}
S(\Id_F) &= S(F \xrightarrow{\gamma} F) = (F^+ \xrightarrow{\gamma} F^+) \circ (F^+ \xrightarrow{n_F n_F^{-1}} F^+), \\
&= \Id_{F^+} \circ \Id_{F^+}, \quad \text{(by A, D)} \\
&= \Id_{F^+}
\end{align*}
For down-arrows, the computations are very similar.  This verifies that relations (A), (B), (C) are respected by $S$.  Relation (D) is trivial to check.  For relation (E), we have
\begin{align*}
S(hF \xrightarrow{g} ghF) \circ S(F \xrightarrow{h} hF) &= ((hF)^+ \xrightarrow{n_{gh F} \cdot g \cdot n_{h F}^{-1}} (ghF)^+) \\
& \phantom{=} \circ (F^+ \xrightarrow{n_{h F} \cdot h \cdot n_{F}^{-1}} (hF)^+), \\
&= (F^+ \xrightarrow{ n_{gh F} \cdot g h \cdot n_F^{-1}  } (ghF)^+ ), \\
&= S( F \xrightarrow{gh} ghF ).
\end{align*}
For relation (F), the intertwining of $G$-arrows and up-arrows, we have
\begin{align*}
S( F_2 \xrightarrow{g} g F_2) \circ S(F_1 \xrightarrow{\gamma} F_2) &= (F_2^+ \xrightarrow{ n_{g F_2} \cdot g \cdot n_{F_2}^{-1} } (g F_2)^+) \circ (n_{F_2} F_1 \xrightarrow{\gamma} F_2^+) \\
& \phantom{=} \circ (F_1^+ \xrightarrow{n_{F_2} \cdot n_{F_1}^{-1}} n_{F_2} F_1), \\
&= (n_{g F_2} g F_1 \xrightarrow{\gamma} (g F_2)^+) \circ (n_{F_2} F_1 \xrightarrow{ n_{g F_2} \cdot g \cdot n_{F_2}^{-1} } n_{g F_2} g F_1) \\
& \phantom{=}  \circ (F_1^+ \xrightarrow{n_{F_2} \cdot n_{F_1}^{-1}} n_{F_2} F_1), \\
&= (n_{g F_2} g F_1 \xrightarrow{\gamma} (g F_2)^+) \circ ( (g F_1)^+ \xrightarrow{ n_{g F_2} n_{g F_1}^{-1} } n_{g F_2} g F_1) \\
& \phantom{=} \circ (F_1^+ \xrightarrow{ n_{g F_1} \cdot g \cdot n_{F_1}^{-1} } (g F_1)^+ ), \quad \text{ (by E)} \\
&= S(g F_1 \xrightarrow{\gamma} g F_2) \circ S( F_1 \xrightarrow{g} g F_1).
\end{align*}
For the intertwining of $G$-arrows and down-arrows, the computations are similar.  This verifies that relations (F) and (G) are respected by $S$, and thus we have defined a functor as claimed,
$$S \From \Cat{Fac}_G \To \Cat{Fac}_G^+.$$
By construction, we have $S \circ \Inc = \Id$, as functors from $\Cat{Fac}_G^+$ to itself.  It remains to see that $\Inc \circ S$ is naturally isomorphic to the identity functor.  Note that $[\Inc \circ S](F) = F^+$ for all objects $F$ of $\Cat{Fac}_G$.  Define a natural isomorphism $N \From \Id \Rightarrow \Inc \circ S$ by putting
$$N(F) = (F \xrightarrow{n_F} F^+).$$
To see that this defines a natural isomorphism, we check the commutativity of diagrams in the category $\Cat{Fac}_G$.
$$\begin{tikzcd}
F_1 \arrow{r}{n_{F_1}} \arrow{d}{\gamma} & F_1^+ \arrow{d}{\gamma \circ n_{F_2} n_{F_1}^{-1}} \\
F_2 \arrow{r}{n_{F_2}} & F_2^+
\end{tikzcd}
\quad
\begin{tikzcd}
F_1 \arrow{r}{n_{F_1}}  & F_1^+  \\
F_2 \arrow{r}{n_{F_2}} \arrow{u}{\delta} & F_2^+ \arrow{u}[swap]{\delta \circ n_{F_1} n_{F_2}^{-1}}
\end{tikzcd}
$$
Commutativity of the left diagram follows from relation (E) and (F), and the right diagram from (E) and (G).

Finally, we have to check the commutativity of the diagram below.
$$\begin{tikzcd}
F \arrow{d}{g} \arrow{r}{n_F} & F^+ \arrow{d}{n_{g F} \cdot g \cdot n_F^{-1}} \\
gF \arrow{r}{n_{gF}} & (gF)^+
\end{tikzcd}$$
This is obvious, verifying that $N$ is a natural isomorphism.
\end{proof}

\subsection{Monotonic equivariant bisheaves}

A bisheaf $(\sheaf{V}, \gamma, \delta)$ on any poset $\Facet$ is called \defined{monotonic} if for all facets $F_1 \leq F_2$,
$$\Id_{\sheaf{V}_{F_2}} = \gamma_{F_1, F_2} \circ \delta_{F_2, F_1}   \From \sheaf{V}_{F_2} \To \sheaf{V}_{F_1} \To \sheaf{V}_{F_2}.$$
Within the category of bisheaves, write $\Cat{BiSh}^{\mon}(\Facet, \Omega)$ for the full subcategory of $\Cat{BiSh}(\Facet, \Omega)$ whose objects are the monotonic bisheaves.  Similarly, write $\Cat{BiSh}_G^{\mon}(\Facet, \Omega)$ for the category of $G$-equivariant monotonic bisheaves.

\begin{proposition}
\label{monores}
Suppose that $\Facet^+$ is a $G$-full subposet of a poset $\Facet$ with $G$-action.  Then the restriction functor,
$$\Res \From \Cat{BiSh}_G(\Facet, \Omega) \xrightarrow{\sim} \Cat{BiSh}_G(\Facet^+, \Omega),$$
restricts to an equivalence of categories,
$$\Res \From \Cat{BiSh}_G^{\mon}(\Facet, \Omega) \xrightarrow{\sim} \Cat{BiSh}_G^{\mon}(\Facet^+, \Omega).$$
\end{proposition}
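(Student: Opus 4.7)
The plan is to piggyback on Theorem \ref{restful}. That theorem already gives that $\Res \From \Cat{BiSh}_G(\Facet, \Omega) \to \Cat{BiSh}_G(\Facet^+, \Omega)$ is an equivalence. Since $\Cat{BiSh}_G^{\mon}$ is by definition a full subcategory of $\Cat{BiSh}_G$ on both sides, the restriction of $\Res$ to $\Cat{BiSh}_G^{\mon}(\Facet,\Omega)$ is automatically fully faithful. Thus it suffices to check two things: (i) $\Res$ carries monotonic bisheaves to monotonic bisheaves, so that the restricted functor is well-defined; and (ii) every monotonic bisheaf on $\Facet^+$ lies in the essential image, i.e., any $\sheaf{V}$ on $\Facet$ whose restriction to $\Facet^+$ is monotonic is itself monotonic.

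Part (i) is immediate, essentially by unwinding the definition: if $F_1 \leq F_2$ are both in $\Facet^+$, then the monotonicity identity $\gamma_{F_1,F_2} \circ \delta_{F_2,F_1} = \Id_{\sheaf{V}_{F_2}}$ for $\sheaf{V}$ on $\Facet$ restricts verbatim to the same identity for $\Res(\sheaf{V})$ on $\Facet^+$.

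Part (ii) is where the $G$-fullness hypothesis enters, and it is the one real step. Let $(\sheaf{V}, \gamma, \delta, \eta)$ be a $G$-equivariant bisheaf on $\Facet$ whose restriction to $\Facet^+$ is monotonic, and let $F_1 \leq F_2$ in $\Facet$. By $G$-fullness, choose $g \in G$ with $g F_2 \in \Facet^+$; by closedness, $g F_1 \in \Facet^+$ as well. The equivariance axiom (Eq3) supplies the identities
$$\gamma_{g F_1, g F_2} = \eta_{g, F_2} \circ \gamma_{F_1, F_2} \circ \eta_{g, F_1}^{-1}, \qquad \delta_{g F_2, g F_1} = \eta_{g, F_1} \circ \delta_{F_2, F_1} \circ \eta_{g, F_2}^{-1},$$
where we use that each $\eta_{g, F}$ is an isomorphism. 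Composing and using monotonicity on the translated pair $(g F_1, g F_2) \in \Facet^+$ gives
$$\eta_{g, F_2} \circ \gamma_{F_1, F_2} \circ \delta_{F_2, F_1} \circ \eta_{g, F_2}^{-1} = \gamma_{g F_1, g F_2} \circ \delta_{g F_2, g F_1} = \Id_{\sheaf{V}_{g F_2}},$$
and conjugating back by $\eta_{g, F_2}$ yields $\gamma_{F_1, F_2} \circ \delta_{F_2, F_1} = \Id_{\sheaf{V}_{F_2}}$, as required.

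The main obstacle is really just bookkeeping: making sure that $g F_1$ is available in $\Facet^+$ once $g F_2$ is (which is precisely closedness), and that the equivariance isomorphisms can be inverted to transport the monotonicity identity back to the original pair. Once this is in place, combining (i) and (ii) with the full-faithfulness inherited from Theorem \ref{restful} gives the desired equivalence.
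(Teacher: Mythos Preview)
Your proof is correct and matches the paper's argument in substance: both use the $G$-equivariance isomorphisms to transport the monotonicity identity from an arbitrary pair $F_1 \leq F_2$ to a translated pair lying in $\Facet^+$. The only cosmetic difference is that the paper phrases this as showing the explicit inverse functor $S^\ast$ (from the proof of Theorem~\ref{restful}) preserves monotonicity, computing $S(\gamma)\circ S(\delta)$ in $\Cat{Fac}_G^+$, whereas you argue directly on bisheaves that monotonicity of $\Res(\sheaf{V})$ forces monotonicity of $\sheaf{V}$; the underlying conjugation by $\eta_{g,F_2}$ is identical.
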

\begin{proof}
The restriction functor sends monotonic bisheaves to monotonic bisheaves.  The only thing to verify is that the inverse functor sends monotonic bisheaves to monotonic bisheaves.  For this, we recall the functor $S \From \Cat{Fac}_G \To \Cat{Fac}_G^+$ (see the proof of Theorem \ref{restful}) and compute,
\begin{align*}
S(F_1 \xrightarrow{\gamma} F_2) \circ S(F_2 \xrightarrow{\delta} F_1) &= 
(n_2 F_1 \xrightarrow{\gamma} F_2^+) \circ (F_1^+ \xrightarrow{n_2 n_1^{-1}} n_2 F_1) \\
&\phantom{=} \circ (n_2 F_1 \xrightarrow{n_1 n_2^{-1}} F_1^+) \circ (F_2^+ \xrightarrow{\delta} n_2 F_1), \\
&= (n_2 F_1 \xrightarrow{\gamma} F_2^+) \circ (F_2^+  \xrightarrow{\delta} n_2 F_1).  \quad \text{(by (E))}
\end{align*}
If $\sheaf{V}$ is a monotonic bisheaf on $\Facet^+$, viewed as a functor $\Cat{Fac}_G^+ \To \Omega$, then $\sheaf{V}(\gamma_{F_1, F_2} \circ \delta_{F_2, F_1}) = \Id_{\sheaf{V}_{F_2}}$.  The above computation shows that
$$[\sheaf{V} \circ S](F_1 \xrightarrow{\gamma} F_2 \xrightarrow{\delta} F_1) = \sheaf{V}(n_2 F_1 \xrightarrow{\gamma} F_2^+ \xrightarrow{\delta} n_2 F_1) = \Id.$$
Hence $\sheaf{V} \circ S = S^\ast \sheaf{V}$ is a monotonic bisheaf as well.
\end{proof}

\subsection{Fundamental domains}

We may strengthen the ``$G$-fullness'' condition, to define a notion of fundamental domain.
\begin{definition}
A subset $\Facet^+ \subset \Facet$ is called a \defined{closed fundamental domain} if it satisfies the following conditions.
\begin{enumerate}
\item
(closedness):  if $F,F' \in \Facet$, $F \leq F'$, and $F' \in \Facet^+$, then $F \in \Facet^+$.
\item
If $F \in \Facet$, then $\{ g F : g \in G \} \cap \Facet^+$ has cardinality one.
\end{enumerate}
\end{definition}

Every closed fundamental domain in $\Facet$ is also a $G$-full poset.  When $\Facet^+$ is a closed fundamental domain, the resulting category $\Cat{Fac}_G^+$ is particularly nice.  It is the category generated by up-arrows and down-arrows, as before, together with the following $G$-arrows:  for every $F \in \Facet^+$, let $G_F$ be the stabilizer of $F$ in $G$.  Then the $G$-arrows generating $\Cat{Fac}_G^+$ are precisely the arrows $F \xrightarrow{g} F$, for $F \in \Facet^+$ and $g \in G_F$.  In this way, the category $\Cat{Fac}_G^+$ can be thought of as a ``stacky quotient'' $[\Cat{Fac} / G]$.  From this perspective, it may not be surprising that $G$-equivariant bisheaves on $\Facet$ can be interpreted as bisheaves on this stacky quotient $[\Cat{Fac} / G]$.

In what follows, we apply this to the case of Coxeter arrangements, reducing bisheaves on the reflection representation of a Coxeter group $W$ to objects on a closed chamber.

\section{Coxeter arrangements}
Our treatment of Coxeter arrangements, for finite or affine Coxeter groups, follows Bourbaki \cite[Chapter 5, \S 3]{Bour}.  Let $E$ be a real affine space of dimension $d$, whose space of translations is endowed with a Euclidean metric.  Let $\Hyper$ be a set of hyperplanes in $E$, and $W$ the group of affine-linear automorphisms of $E$ generated by reflections across these hyperplanes.  Assume the two conditions, called (D1), (D2) in Bourbaki,
\begin{enumerate}
\item[(D1)] For every $w \in W$ and $H \in \Hyper$, the hyperplane $w(H)$ belongs to $\Hyper$.
\item[(D2)] If $K$ and $K'$ are compact subsets of $E$, then $\{ w \in W : w(K) \cap K' \neq \emptyset \}$ is finite.
\end{enumerate}
It follows that the set of hyperplanes is locally finite in $E$.  The set of hyperplanes $\Hyper$ gives an equivalence relation on $E$, by declaring $x \sim y$ if for all $H \in \Hyper$, either $x,y \in H$ or $x,y$ lie on the same side of $H$.  The equivalence classes for this relation are called \defined{facets}, and we write $\Facet$ for the set of facets.  We have $E = \bigsqcup_{F \in \Facet} F$.  A \defined{chamber} is a facet which is contained in no hyperplane from $\Hyper$.  

If $F_1, F_2 \in \Facet$, we write $F_1 \leq F_2$ if $F_1$ is contained in the closure of $F_2$.  Every facet is contained in the closure of some chamber.  Every facet $F$ has a well-defined dimension -- the dimension of the minimal affine subspace of $E$ containing $F$.  The chambers are the facets of dimension $d = \dim(E)$.

Let $C$ be a chamber.  A facet $F \leq C$ is called a \defined{face} of $C$ if $\dim(F) = d-1$.  A hyperplane $H \in \Hyper$ is called a \defined{wall} of $C$ if there exists a face $F \leq C$ such that $F \subset H$.  Let $S$ be the set of reflections with respect to the walls of $C$.  Then in \cite[Chapter 5, \S 3.2]{Bour}, it is verified that $(W,S)$ is a Coxeter system.  This means
\begin{enumerate}
\item
$S$ is a finite set which generates $W$.
\item
For all $s \in S$, we have $s^2 = 1$.
\item
For any pair $s, s' \in S$, write $m_{s,s'}$ for the order of $s s'$ in $W$.  Then a presentation of $W$ is given by the set of generators $S$ and the set of relations $\{ (s s')^{m_{s,s'}} : m_{s,s'} < \infty \}$.  
\end{enumerate}
For this reason, we call $(E, \Hyper)$ a \defined{Coxeter arrangement}.  When $(E, \Hyper)$ is a Coxeter arrangement, and $C$ is a chamber, we have constructed a Coxeter system $(W,S)$ and a poset $\Facet$ endowed with a $W$-action.

\subsection{Restriction of bisheaves to a chamber}
Fix a Coxeter arrangement $(E, \Hyper)$ as before, with facet set $\Facet$, a fixed chamber $C$, and the resulting Coxeter system $(W,S)$.  Let $\Facet^+ = \{ F \in \Facet : F \leq C \}$, a fundamental domain for $W$ acting on $\Facet$.  The partially ordered set $\Facet^+$ has an alternative description in terms of $S$.

Define $\Lambda$ to be the set of all finitary subsets of $S$.  Here a \defined{finitary} subset of $S$ is a subset $I \subset S$ which generates a finite subgroup of $W$.  Thus if $W$ is finite, $\Lambda$ is the set of all subsets of $S$.  We make $\Lambda$ a poset by {\em reverse-inclusion} (so $I \leq J$ means that $I \supset J$).
\begin{proposition}
If $I \in \Lambda$, define 
$$F_I = \{ x \in \bar C : s(x) = x \text{ for all } s \in I, \text{ and } s(x) \neq x \text{ for all } s \in S - I \}.$$
Then the map $I \mapsto F_I$ gives a poset isomorphism from $\Lambda$ to $\Facet^+$.
\end{proposition}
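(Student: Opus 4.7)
The plan is to exhibit mutually inverse, order-reversing maps between $\Lambda$ and $\Facet^+$. Given $F \in \Facet^+$, I will define $I(F) = \{s \in S : s \text{ fixes every point of } F\}$ and show that $F \mapsto I(F)$ is inverse to $I \mapsto F_I$, and that the face relation matches reverse inclusion. The core tool is the classical dictionary (Bourbaki V.3.3) identifying the pointwise stabilizer of a point of $\bar C$ with a standard parabolic subgroup of $W$, together with property (D2) for finiteness.

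First I would check that $F \mapsto I(F)$ lands in $\Lambda$ and is a one-sided inverse to $I \mapsto F_I$. For any $x \in F$, all points of $F$ lie on exactly the same hyperplanes of $\Hyper$, so the stabilizer $W_x$ depends only on $F$. By the Bourbaki result, $W_x$ is generated by the reflections across the hyperplanes of $\Hyper$ containing $x$; for $x \in \bar C$ these hyperplanes are walls of $C$, so $W_x = \langle I(F) \rangle$. Applying hypothesis (D2) with $K = K' = \{x\}$, this group is finite, so $I(F) \in \Lambda$. Moreover $F_{I(F)} = F$, since two points of $\bar C$ lie in the same facet if and only if they lie on exactly the same hyperplanes of $\Hyper$, and for points of $\bar C$ those hyperplanes are precisely $\{H_s : s \in I(F)\}$.

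The main obstacle is the converse direction: showing $F_I \neq \emptyset$ for every finitary $I$, so that the equality $I(F_I) = I$ makes sense. Here I would use that $W_I = \langle I \rangle$ is finite and acts on $E$ as a finite reflection group whose fundamental chamber contains $\bar C$; then the standard fact for finite reflection groups (Bourbaki V.3.9) that every face of the fundamental chamber has nonempty relative interior, and that a point in that relative interior is fixed by exactly $W_I$, produces a point $x \in \bar C$ fixed by all of $I$ and by no reflection in $S \setminus I$. Any other hyperplane of $\Hyper$ through such $x$ would give a further reflection in the stabilizer of $x$, hence a further element of $S$ fixing $x$, a contradiction; so $x \in F_I$ and $I(F_I) = I$.

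For the order structure, suppose $I \leq J$ in $\Lambda$, i.e., $I \supseteq J$. Then $L_I := \bigcap_{s \in I} H_s \subseteq L_J$, and a point of $F_I$ lies on every wall containing $F_J$ plus additional ones, so it belongs to the closure of $F_J$ in $\bar C$, giving $F_I \leq F_J$. Conversely, if $F_I \leq F_J$, then any reflection fixing $F_J$ fixes its closure, hence fixes $F_I$, yielding $W_J \subseteq W_I$; by the standard injectivity of $K \mapsto W_K$ on subsets of $S$ (Bourbaki IV.1.8), $J \subseteq I$, i.e., $I \leq J$ in $\Lambda$. Combined with the bijection established above, this gives the desired poset isomorphism.
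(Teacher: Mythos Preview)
Your argument is sound overall and more complete than the paper's, which only checks injectivity (the defining conditions for distinct $F_I$ are mutually exclusive) and surjectivity (for a facet $F \leq C$, the set $I$ of walls containing $F$ is finitary because $\bar F$ contains a $0$-dimensional facet $v$, and $I \subset I_v$ with $W_{I_v}$ the finite stabilizer of $v$).  The paper leaves both nonemptiness of $F_I$ and the order compatibility entirely implicit.  Your finitariness argument---applying (D2) with $K = K' = \{x\}$ and identifying $W_x$ with $W_{I(F)}$---is more direct than the paper's detour through a vertex, and your verification of the order relation is a genuine addition.

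There is, however, a slip in your nonemptiness paragraph.  The $I$-face of the $W_I$-chamber $\bar C_I$ is the entire affine subspace $L_I = \bigcap_{s \in I} H_s$ (since every $W_I$-hyperplane contains $L_I$), and a point in its relative interior need not lie in $\bar C$.  Without $x \in \bar C$ you cannot invoke the fact that every hyperplane of $\Hyper$ meeting $\bar C$ is a wall of $C$, so the ``further element of $S$ fixing $x$'' step is unjustified as written.  The repair is easy: your finiteness of $W_I$ already gives $L_I \neq \emptyset$; then cite the simplicial structure of $\bar C$ itself (a simplicial cone when $W$ is finite, a simplex in the irreducible affine case) to conclude that the face $L_I \cap \bar C$ has nonempty relative interior in $L_I$, and take $x$ there.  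After this adjustment your ``further element of $S$'' argument goes through, since now $x \in \bar C$.
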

\begin{proof}
For injectivity, suppose that $I,J \in \Lambda$ and $I \neq J$.  Note that the conditions defining $F_I$ and those defining $F_J$ are mutually exclusive; thus $F_I \neq F_J$.  For surjectivity, consider a facet $F \leq C$.  Let $I$ be the set of walls containing it, identified with a subset of $S$ (identifying a wall with the reflection across it).  Then the closure $\bar F$ contains at least one 0-dimensional facet $v \in \Facet^+$.  It follows that $I$ is contained in the set $I_v$ of walls containing $v$.  But the reflections through walls containing $v$ generate a finite Coxeter group, and so $I_v$ is finitary.  Hence $I$ is finitary.
\end{proof}

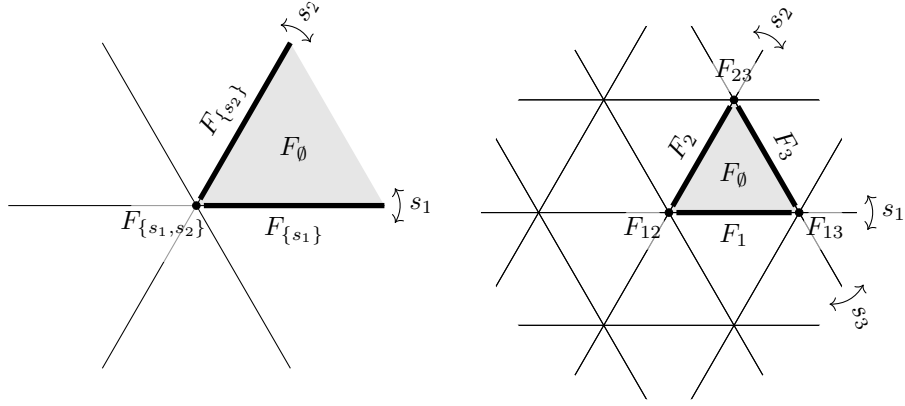
\begin{figure}[htbp!]
\begin{tikzpicture}
\clip (0,0) circle (3.1);
\fill[black!10] (0:2.5) -- (0,0) -- (60:2.5);
\foreach \theta in {0, 60, 120, 180, 240 , 300}
{
\draw (0,0) -- (\theta: 2.5);
}
\draw (30:1.5) node {$F_\emptyset$};
\draw[<->] (-5:2.65) to [bend right = 30] node[right] {$s_1$} (5:2.65);
\draw[<->] (55:2.65) to [bend right = 30] node[right, rotate=60] {$s_2$} (65:2.65);
\draw[line width=2pt] (0.1,0) to node[below] {$F_{\{ s_1 \}}$} (0:2.5);

\draw[line width=2pt] (60:0.1) to node[above, rotate=60] {$F_{\{ s_2 \}}$} (60:2.5);

\fill[white, opacity=0.6] (210:0.5) circle (0.5);
\filldraw (0,0) circle (0.05);
\draw (210:0.5) node {$F_{\{ s_1, s_2\} }$};

\end{tikzpicture}
\hfill
\begin{tikzpicture}

\draw[white] (0,0) circle (3.0);
\fill[black!10] (0:1.75) -- (0,0) -- (60:1.75);
\draw[<->] (-5:2.65) to [bend right = 30] node[right] {$s_1$} (5:2.65);
\draw[<->] (55:2.65) to [bend right = 30] node[right, rotate=60] {$s_2$} (65:2.65);
\draw (30:1) node {$F_\emptyset$};
\draw[line width = 2pt] (60:0.1) to node[above, rotate=60] {$F_2$} (60:1.632);
\draw[line width = 2pt] (0:0.1) to node[below] {$F_1$} (0:1.632);

\begin{scope}[xshift=0.866 cm, yshift = 1.5cm ]
\draw[<->] (-55:3) to [bend left = 30] node[right, rotate=-60] {$s_3$} (-65:3);
\draw[line width = 2pt] (-60:0.1) to node[above, rotate=-60] {$F_3$} (-60:1.632);
\end{scope}
\begin{scope}
\clip (0,0) circle (2.5);
\foreach \theta in {0, 60, 120, 180, 240 , 300}
{
\begin{scope}[rotate = \theta]
\foreach \off in {-3, -1.5, 0, 1.5, 3}
{
\begin{scope}[yshift = \off cm]
\draw (-2.5,0) -- (2.5,0);
\end{scope}
}
\end{scope}

}
\end{scope}
\filldraw (0,0) circle (0.05);
\fill[white, opacity=0.6] (210:0.4) circle (0.3);
\draw (210:0.4) node {$F_{12}$};

\filldraw (0.866,1.5) circle (0.05);
\fill[white, opacity=0.6] (0.866,1.9) circle (0.3);
\draw (0.866,1.9) node {$F_{23}$};

\begin{scope}[xshift = 1.732cm]
\filldraw (0,0) circle (0.05);
\fill[white, opacity=0.6] (-30:0.4) circle (0.3);
\draw (-30:0.4) node {$F_{13}$};
\end{scope}
\end{tikzpicture}

\caption{Coxeter arrangements of type $A_2$ and $\hat A_2$, with fixed chambers.  Facets in the closure of the chamber are labeled by subsets of $S$, with the chamber corresponding to $\emptyset$.  On the right, subsets are abbreviated by their numbers, e.g., $F_{12}$ stands for $F_{\{ s_1, s_2 \} }$.}
\end{figure}

Restriction of bisheaves, together with the above proposition, gives a functor,
$$\Res \From \Cat{BiSh}(\Facet, \Omega) \To \Cat{BiSh}(\Facet^+, \Omega) \xrightarrow{\sim} \Cat{BiSh}(\Lambda, \Omega).$$
The second functor is an {\em isomorphism} of categories.  To make the category $\Cat{BiSh}(\Lambda, \Omega)$ concrete, a $\Omega$-valued bisheaf on $\Lambda$ consists of objects $\sheaf{V}_I \in \Omega$ for each $I \in \Lambda$, together with morphisms
$$\gamma_{I,J} \From \sheaf{V}_I \To \sheaf{V}_J, \quad \delta_{J,I} \From \sheaf{V}_J \To \sheaf{V}_I,  \text{ for all } I \supset J,$$
satisfying the axioms
$$\gamma_{J,K} \circ \gamma_{I,J} = \gamma_{I,K}, \quad \delta_{J,I} \circ \delta_{K,J} = \delta_{K,I} \text{ if } I \supset J \supset K,$$
and $\delta_{I,I} = \gamma_{I,I} = \Id_{\sheaf{V}_I}$ for all $I \in \Lambda$.
A bisheaf on $\Lambda$, valued in $\Cat{Vec}_\CC$ (for example), is the same as a representation of the {\em Hasse quiver} discussed in \cite[\S 5]{DJS}.

Applying Theorem \ref{restful} and Proposition \ref{monores}, we have
\begin{thm}
Restriction to a chamber gives an equivalence of categories,
$$\Cat{BiSh}_W(\Facet, \Omega) \xrightarrow{\sim} \Cat{BiSh}_W(\Lambda, \Omega),$$
from $W$-equivariant bisheaves on the Coxeter arrangement to $W$-equivariant bisheaves on the poset $\Lambda$ of finitary subsets of $S$.  This equivalence respects the subcategories of monotonic bisheaves.
\end{thm}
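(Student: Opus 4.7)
The plan is to combine the general restriction equivalence (Theorem \ref{restful}), its monotonic refinement (Proposition \ref{monores}), and the poset isomorphism $\Lambda \xrightarrow{\sim} \Facet^+$ of the previous proposition.

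First, I would verify that $\Facet^+ \defeq \{ F \in \Facet : F \leq C \}$ is a closed fundamental domain for the $W$-action on $\Facet$, and in particular a $W$-full subposet. Closedness is immediate: if $F \leq F'$ and $F' \in \Facet^+$, then $F \subseteq \overline{F'} \subseteq \bar C$, so $F \in \Facet^+$. The fundamental domain property is the classical statement, recorded above the theorem, that $\bar C$ is a strict fundamental domain for $W$ acting on $E$; this forces each $W$-orbit of facets to meet $\Facet^+$ in exactly one element.

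Second, I would apply Theorem \ref{restful} to produce an equivalence
$$\Res \From \Cat{BiSh}_W(\Facet, \Omega) \xrightarrow{\sim} \Cat{BiSh}_W(\Facet^+, \Omega),$$
and invoke Proposition \ref{monores} to see that the same restriction is an equivalence on the monotonic subcategories.

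Finally, the previous proposition gives a poset isomorphism $I \mapsto F_I$ from $\Lambda$ to $\Facet^+$, which induces an isomorphism of non-equivariant bisheaf categories $\Cat{BiSh}(\Facet^+, \Omega) \xrightarrow{\sim} \Cat{BiSh}(\Lambda, \Omega)$ that obviously preserves monotonicity (a condition involving only $\gamma$ and $\delta$). The one substantive point is to transfer the $W$-equivariant structure across this isomorphism. Because $\Facet^+$ is a fundamental domain, a $W$-equivariant structure on a bisheaf over $\Facet^+$ amounts to compatible actions of the stabilizers $W_F$ on each stalk $\sheaf{V}_F$, as explained in the remark preceding the theorem; and from the defining condition on $F_I$, the stabilizer $W_{F_I}$ is precisely the parabolic subgroup $W_I = \langle I \rangle$. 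Composing these equivalences yields the theorem, with the monotonic version following from Proposition \ref{monores}. I expect the only genuine content beyond invoking the earlier results to be this final bookkeeping identifying stabilizer actions in $\Facet^+$ with parabolic-subgroup actions on the $\sheaf{V}_I$.
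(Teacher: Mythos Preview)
Your proposal is correct and follows essentially the same approach as the paper: the paper's proof consists entirely of the phrase ``Applying Theorem \ref{restful} and Proposition \ref{monores},'' so your argument---verifying that $\Facet^+$ is a closed fundamental domain, invoking those two results, and transporting across the poset isomorphism $\Lambda \cong \Facet^+$---is just a more detailed rendering of the intended proof. Your identification of the stabilizer $W_{F_I}$ with the parabolic $W_I$ is exactly what the paper records in the paragraph immediately following the theorem.
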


It is important to note that the closed Weyl chamber $\Facet^+$ is a closed fundamental domain; it follows that a $W$-equivariant structure for a sheaf on $\Lambda$ (equivalently on $\Facet^+$) is given by actions of stabilizing subgroups only; the $W$-arrows of the category $\Cat{Fac}_W^+$ are precisely the arrows $I \xrightarrow{w} I$ for all $I \in \Lambda$ and $w \in W_I$.

\subsection{Modules and algebras}

Let $R$ be a (commutative, unital) ring, and $\Omega$ the category of $R$-modules.  Suppose that $(W,S)$ is a {\em finite} Coxeter system, so that $\Lambda$ is the power set of $S$.  In this case, the category $\Cat{BiSh}_W^{\mon}(\Lambda, \Omega)$ has a concrete interpretation.

Let $(\sheaf{V}, \gamma, \delta, \eta)$ be a monotonic, $W$-equivariant $\Omega$-valued bisheaf on $\Lambda$.  Thus, for every $I \in \Lambda$, we have an $R$-module $\sheaf{V}_I$.  The condition of monotonicity states that
$$\gamma_{I,J} \circ \delta_{J,I} = \Id \From \sheaf{V}_J \To \sheaf{V}_J \text{ for all } I \supset J.$$
It follows that $\delta_{J,I}$ is injective and $\gamma_{I,J}$ is surjective.

Define the following data from $(\sheaf{V}, \gamma, \delta, \eta)$:
\begin{itemize}
\item
Let $V = \sheaf{V}_S$, the $R$-module associated to the unique vertex in $\bar C$.
\item
For all $I \subset S$, define $e_I = \delta_{I,S} \circ \gamma_{S,I} \in \End(V)$.  Monotonicity implies that $e_I$ is idempotent.
\item
For $w \in W$, write $w$ for the automorphism $\eta(w) \in \Aut(V) = \Aut(\sheaf{V}_S)$ arising from the equivariant structure on $\sheaf{V}$.
\end{itemize}

The equivariant bisheaf axioms imply the following properties of the idempotents $e_I$ and automorphisms $\eta(w)$.
\begin{enumerate}
\item
If $I \supset J$ then $e_I e_J = e_J$.
\item
If $s \in I$, then $s e_I = e_I s$.
\end{enumerate}
Motivated by this, let $\AA_W^{\mon}$ be the $R$-algebra $R \langle e_I : I \in \Lambda,\ s : s \in S \rangle$ modulo the relations
\begin{enumerate}
\item
If $I \supset J$ then $e_I e_J = e_J$.
\item
If $s \in I$ then $s e_I = e_I s$.
\item
$s^2 = 1$ for all $s \in S$.
\item
$(s s')^{m_{s,s'}} = 1$ for all $s,s' \in S$ with $m_{s,s'} < \infty$.
\end{enumerate}

\begin{proposition}
\label{monequiv}
The construction above gives an equivalence of categories, from $\Cat{BiSh}_W^{\mon}(\Lambda, \Omega)$ to the category of modules over $\AA_W^{\mon}$ 
\end{proposition}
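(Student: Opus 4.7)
The plan is to construct an explicit quasi-inverse $\Psi$ to the functor $\Phi$ described above (sending $(\sheaf{V},\gamma,\delta,\eta)$ to the data $V = \sheaf{V}_S$ with $e_I = \delta_{I,S}\gamma_{S,I}$ and $s$ acting as $\eta_{s,S}$), and to exhibit natural isomorphisms $\Psi\Phi \cong \Id$, $\Phi\Psi \cong \Id$.

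First I would verify that $\Phi$ really lands in the category of modules over $\AA_W^{\mon}$. Relations (3) and (4)---the Coxeter relations---are immediate from the $W$-equivariance axioms (Eq1) and (Eq2) applied to $W = W_S$ (the stabilizer of the apex $F_S$ is all of $W$, since $W$ is finite). Relation (2), $se_I = e_I s$ for $s \in I$, follows from (Eq3) applied to the comparison $F_S \leq F_I$ and the element $s \in W_I$, which yields the intertwinings $\eta_{s,I}\gamma_{S,I} = \gamma_{S,I}\eta_{s,S}$ and $\eta_{s,S}\delta_{I,S} = \delta_{I,S}\eta_{s,I}$; combining them gives $se_I = e_I s$. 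Relation (1), $e_Ie_J = e_J$ for $I \supset J$, is proved by factoring $\delta_{J,S} = \delta_{I,S}\delta_{J,I}$ and $\gamma_{S,J} = \gamma_{I,J}\gamma_{S,I}$ and applying monotonicity $\gamma_{S,I}\delta_{I,S} = \Id_{\sheaf{V}_I}$ in the middle of $e_Ie_J$. A parallel computation (factoring $\gamma_{S,J} = \gamma_{I,J}\gamma_{S,I}$ inside $e_Je_I$ and applying the same monotonicity identity) shows that the symmetric identity $e_Je_I = e_J$ also holds on the bisheaf side; this will be crucial in the other direction.

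For the quasi-inverse $\Psi$, I would set $\Psi(V)_I = e_I V$ for each $I \in \Lambda$. Relation (1) ensures $e_JV \subset e_IV$ for $I \supset J$, so define $\delta_{J,I}$ to be this inclusion and $\gamma_{I,J}\colon e_I V \to e_J V$ by $x \mapsto e_J x$; monotonicity $\gamma_{I,J}\delta_{J,I} = \Id_{e_JV}$ is then tautological. Relations (3) and (4) make $V$ a $W$-module, and by relation (2) each $s \in I$ preserves $e_I V$, so the restriction of the $W$-action along $W_I \hookrightarrow W$ gives the required equivariant maps $\eta_{w,I}$; the intertwining axioms (Eq3) are read off from $se_I = e_I s$. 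The natural isomorphism $\Psi\Phi(\sheaf{V}) \cong \sheaf{V}$ is implemented by $\delta_{I,S}\colon \sheaf{V}_I \xrightarrow{\sim} e_I\sheaf{V}_S$, which is monic by monotonicity and has image exactly $e_I\sheaf{V}_S$. The isomorphism $\Phi\Psi(V) \cong V$ recovers $e_SV = V$ under the implicit convention $e_S = 1$ (corresponding to the fact that $V = \sheaf{V}_S$ is the entire space at the top element $S$ of $\Lambda$).

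The main obstacle is verifying the upper-composition axiom for $\Psi(V)$: for $I \supset J \supset K$, one must check $e_Ke_Jx = e_Kx$ whenever $x \in e_IV$, i.e., the algebra identity $e_Ke_Je_I = e_Ke_I$. This is not a formal consequence of relation (1) as written, but follows once one also has $e_Je_I = e_J$ for $I \supset J$---the symmetric companion derived on the bisheaf side above. Accordingly, the relations defining $\AA_W^{\mon}$ should be read so as to include this symmetric form (equivalently, the idempotents $\{e_I\}_{I \in \Lambda}$ commute along comparable pairs and satisfy $e_Ie_J = e_J$ for $I \supset J$). With this understood, composition of uppers and downers and all remaining bisheaf and equivariance axioms propagate cleanly from the algebra structure, completing the equivalence.
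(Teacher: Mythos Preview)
Your approach is essentially identical to the paper's: construct the inverse functor by $\sheaf{V}_I = e_I V$ with $\delta$ inclusion and $\gamma$ multiplication by $e_J$, then check the axioms. The paper's own proof is terse and simply asserts that ``the bisheaf conditions are clear, as they follow from the conditions $e_I e_J = e_J$ for $I \supset J$.''

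You have, however, been more careful than the paper on one point, and your observation is correct. The upper-composition axiom $\gamma_{J,K} \circ \gamma_{I,J} = \gamma_{I,K}$ for $I \supset J \supset K$ requires $e_K e_J = e_K$ on $e_I V$, and this does \emph{not} follow formally from relation~(1) as written (which only gives $e_J e_K = e_K$, the wrong order). The symmetric companion $e_J e_I = e_J$ for $I \supset J$ does hold for any operators coming from a monotonic bisheaf, exactly by the computation you give, so the forward functor lands in a quotient where both orderings hold; but for the inverse functor to produce a genuine bisheaf from an arbitrary $\AA_W^{\mon}$-module, one needs this relation in the algebra. The paper tacitly assumes it (and later in Theorem~\ref{mainth1} upgrades to relation~(1'), $e_I e_J = e_{I\cap J}$, but only after invoking transitivity). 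Your fix---reading the relations so that $e_I e_J = e_J = e_J e_I$ whenever $I \supset J$---is the right one, and costs nothing since it is automatic on the bisheaf side. A similar remark applies to $e_S = 1$, which you also flag.
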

\begin{proof}
The construction described above gives a functor $(\sheaf{V}, \gamma, \delta, \eta) \mapsto V$ from $\Cat{BiSh}_W^{\mon}(\Lambda, \Omega)$ to $\Cat{Mod}(\AA_W^{\mon})$.

From an $\AA_W^{\mon}$-module $V$, one may reconstruct an equivariant bisheaf $(\sheaf{V}, \gamma, \delta, \eta)$ according to the following recipe.  Write $\sheaf{V}_I = e_I V$.  Define $\delta_{J,I} \From \sheaf{V}_J \To \sheaf{V}_I$ by $\delta(v) = v$, whenever $I \supset J$, i.e., $\delta_{J,I}$ is the inclusion map.  Define $\gamma_{I,J} \From \sheaf{V}_I \To \sheaf{V}_J$ by $\gamma_{I,J}(v) = e_J v$.  The bisheaf conditions are clear, as they follow from the conditions $e_I e_J = e_J$ for $I \supset J$.  We have constructed a bisheaf $(\sheaf{V}, \gamma, \delta)$ on the poset $\Lambda$.

The relations (3) and (4) in $\AA_W^{\mon}$ are precisely the Coxeter relations in $W$, and so every $\AA_W^{\mon}$-module becomes a $W$-module.  In this way, $\sheaf{V}_S = V$ inherits ``$W$-morphisms'' -- it is a $RW$-module.  The commutativity $s e_I = e_I s$ implies that every submodule $\sheaf{V}_I = e_I V$ inherits the structure of a $W_I$-module.  This is precisely the data of a $W$-equivariant structure on the bisheaf $\sheaf{V}$.

This construction gives an inverse functor $\Cat{Mod}(\AA_W^{\mon}) \To \Cat{BiSh}_W^{\mon}(\Lambda, \Omega)$.  
\end{proof}

\section{Perverse sheaves after Kapranov and Schechtman}

Let $\Hyper$ be a locally finite hyperplane arrangement in a real affine space $E$.  Let $\Facet$ be the resulting partially ordered set of facets.  Let $\Hyper_\CC$ and $E_\CC$ be the complexification; then $\Hyper_\CC$ defines a stratification of the complex affine space $E_\CC$.  In \cite{KapSch}, Kapranov and Schechtman describe the category $\Cat{Perv}(E_\CC, \Hyper_\CC)$ of perverse sheaves on $E_\CC$, smooth with respect to this stratification.  Specifically, Kapranov and Schechtman define an equivalence of categories from $\Cat{Perv}(E_\CC, \Hyper_\CC)$ to a full subcategory of $\Cat{BiSh}(\Facet, \Omega)$, where $\Omega$ is the category of finite-dimensional complex vector spaces.  This full subcategory consists of those bisheaves which are {\em monotonic} and {\em transitive} and {\em invertible}.

We have discussed the monotonic axiom, and in this section we review the axioms for transitivity and invertibility.  These notions rely on more than the poset structure of $\Facet$ -- the structure of the real hyperplane arrangement plays a deeper role.   

\subsection{The transitive axiom}

Fix $\Hyper$ and $E$ as above, a locally finite real hyperplane arrangement, with resulting facet decomposition $\Facet$.  If $F \in \Facet$, the \defined{star} of $F$ is defined by 
$$\Star(F) = \bigcup_{F' : F \leq F'} F'.$$
Then $\Star(F)$ is an open neighborhood of $F$ in $E$.  Following \cite[\S 9.B]{KapSch}, we define \defined{collinear facets} in such a star-neighborhood.
\begin{definition}
Let $F_0$ be a facet, and suppose that $(F_1, F_2, F_3)$ is an ordered triple of facets in the star $\Star(F_0)$.  We say this triple is \defined{collinear} if there exist points $p_1, p_2, p_3$ in $F_1, F_2, F_3$, respectively, such that $p_2 \in \overline{p_1 p_3}$ (the line segment joining $p_1$ and $p_3$ in $\Apart$).
\end{definition}

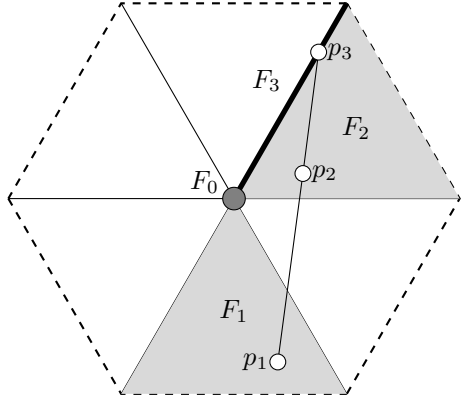
\begin{figure}[htbp!]
\begin{tikzpicture}[scale=1.5]
\foreach \t in {0,120,240}
{
\draw (\t:-2) -- (\t:2);
\draw[dashed, thick] (\t - 60:2) -- (\t:2) -- (\t+60:2);
}
\fill[fill=gray!30] (0,0) -- (-60:2) -- (-120:2) -- cycle;
\fill[fill=gray!30] (0,0) -- (60:2) -- (0:2) -- cycle;
\filldraw[fill=gray, line width = 2pt] (0,0) -- (60:2);
\filldraw[fill=gray, draw=black] (0,0) circle (0.1);
\draw (150:0.3) node {$F_0$};

\node (F1) at (-90:1) {$F_1$};
\node (F2) at (30:1.25) {$F_2$};
\node[above left] (F3) at (60:1) {$F_3$};

\coordinate (p1) at (-75:1.5); 
\coordinate (p2) at (20:0.65); 
\coordinate (p3) at (60:1.5);
\draw (p1) -- (p2) -- (p3);
\filldraw[draw=black, fill=white] (p1) circle (2pt) node[left] {$p_1$};
\filldraw[draw=black, fill=white] (p2) circle (2pt) node[right] {$p_2$};
\filldraw[draw=black, fill=white] (p3) circle (2pt) node[right] {$p_3$};
\end{tikzpicture}
\caption{The star neighborhood $\Star(F_0)$.  The triple of facets $(F_1, F_2, F_3)$ (of dimensions $2,2,1$, respectively) are collinear, exhibited by the line segment through $p_1, p_2, p_3$.}
\end{figure}

\begin{definition}
A bisheaf $(\sheaf{V}, \gamma, \delta)$ on $\Facet$ is called \defined{transitive} if for all $F_0 \in \Facet$ and all collinear triples $(F_1, F_2, F_3)$ in the star neighborhood $\Star(F_0)$, we have
$$\gamma_{F_0,F_3} \delta_{F_2, F_0} \circ \gamma_{F_0, F_2} \delta_{F_1, F_0} = \gamma_{F_0, F_3} \delta_{F_1, F_0}.$$
I.e., both paths in the diagram below lead to the same map from $\sheaf{V}_{F_1}$ to $\sheaf{V}_{F_3}$.
$$\begin{tikzcd}
\sheaf{V}_{F_1} \arrow{dr}{\delta} & & \sheaf{V}_{F_2} \arrow{dr}{\delta} & & \sheaf{V}_{F_3} \\
& \sheaf{V}_{F_0} \arrow{ur}{\gamma} \arrow{rr}{=} & & \sheaf{V}_{F_0} \arrow{ur}{\gamma}& 
\end{tikzcd}$$
\end{definition}

Following the notation of \cite{KapSch}, we define $\phi_{A,B} = \gamma_{F_0, B} \circ \delta_{A,F_0}$ when $A,B$ are contained in the star neighborhood $\Star(F_0)$, and $F_0$ is held fixed.  In this notation, the transitive axiom becomes
$$\phi_{F_2, F_3} \circ \phi_{F_1, F_2} = \phi_{F_1, F_3}.$$

Consider a line segment in $E$, and the sequence of facets it passes through.  Each time the segment passes from a facet $F$ to a facet $F'$, we have $\dim(F) < \dim(F')$ or $\dim(F) > \dim(F')$; we say the segment goes Up or Down accordingly.  Since hyperplanes are determined by affine-linear functions on $E$, one cannot have a segment go Up then Up again, or Down then Down again.  The dimensions of facets must alternately increase and decrease along the segment.

For segments that follow a Down-Up pattern, the axiom of monotonicity implies the axiom of transitivity.
\begin{lemma}
\label{DUseg}
Suppose $F_0 \in \Facet$, and $F_1, F_2, F_3$ are facets in $\Star(F_0)$.  Assume moreover that $F_1 \geq F_2$ and $F_2 \leq F_3$.  If $\sheaf{V}$ is a monotonic bisheaf on $\Facet$, then $\phi_{F_2, F_3} \circ \phi_{F_1, F_2} = \phi_{F_1, F_3}.$
\end{lemma}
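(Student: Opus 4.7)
The plan is to unpack both sides in terms of the primitive operations $\gamma$ and $\delta$ and then exploit the fact that the segment described by the hypotheses actually factors entirely through $\sheaf{V}_{F_2}$. Concretely, since $F_1, F_2, F_3 \in \Star(F_0)$ we have $F_0 \leq F_1$, $F_0 \leq F_2$, $F_0 \leq F_3$, and the additional hypotheses $F_1 \geq F_2$ and $F_2 \leq F_3$ give the chain $F_0 \leq F_2 \leq F_1$. This is the only geometric content used; once we have this chain, the rest is diagram chasing from the bisheaf axioms and monotonicity.

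First I would write out
\[
\phi_{F_2,F_3} \circ \phi_{F_1,F_2} \;=\; \gamma_{F_0,F_3} \circ \delta_{F_2,F_0} \circ \gamma_{F_0,F_2} \circ \delta_{F_1,F_0}.
\]
Then I would apply the downer composition axiom to $F_0 \leq F_2 \leq F_1$ to factor $\delta_{F_1,F_0} = \delta_{F_2,F_0} \circ \delta_{F_1,F_2}$. Inserting this gives a middle piece of the form $\gamma_{F_0,F_2} \circ \delta_{F_2,F_0}$, which by monotonicity (applied to the relation $F_0 \leq F_2$) equals $\Id_{\sheaf{V}_{F_2}}$.

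What remains collapses cleanly: the inner four arrows telescope to $\delta_{F_2,F_0} \circ \delta_{F_1,F_2} = \delta_{F_1,F_0}$ (again by downer composition), leaving $\gamma_{F_0,F_3} \circ \delta_{F_1,F_0} = \phi_{F_1,F_3}$, which is the desired identity.

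There is no real obstacle here; the only thing to verify carefully is that the inequalities from ``$F_1, F_2, F_3 \in \Star(F_0)$'' combined with ``$F_1 \geq F_2$'' really give the totally ordered chain $F_0 \leq F_2 \leq F_1$, so that both the downer composition axiom and monotonicity are available. The role of $F_3$ is inert in the argument — the relation $F_2 \leq F_3$ is not needed for the equality of maps from $\sheaf{V}_{F_1}$ to $\sheaf{V}_{F_3}$, since the simplification already occurs on the $F_0$–$F_1$–$F_2$ side before $\gamma_{F_0,F_3}$ is applied.
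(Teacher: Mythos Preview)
Your proof is correct and uses the same ingredients as the paper (the downer composition axiom along the chain $F_0 \leq F_2 \leq F_1$ together with monotonicity for $F_0 \leq F_2$), but your argument is more economical. The paper starts from $\phi_{F_1,F_3}$, factors \emph{both} $\delta_{F_1,F_0}$ and $\gamma_{F_0,F_3}$ through $\sheaf{V}_{F_2}$ (thereby using $F_2 \leq F_3$), and then inserts two separate monotonicity identities to rebuild $\phi_{F_2,F_3}\circ\phi_{F_1,F_2}$; you instead work from $\phi_{F_2,F_3}\circ\phi_{F_1,F_2}$ directly, factor only $\delta_{F_1,F_0}$, and cancel a single $\gamma_{F_0,F_2}\circ\delta_{F_2,F_0}=\Id$. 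Your closing observation is correct and is a genuine sharpening: the hypothesis $F_2 \leq F_3$ is never used in your computation, so the identity holds for any $F_3 \in \Star(F_0)$ once $F_0 \leq F_2 \leq F_1$.
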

\begin{proof}
If $F_1 \geq F_2$ and $F_2 \leq F_3$, then we find the diagram of linear maps below.
$$\begin{tikzcd}
\sheaf{V}_{F_1} \arrow{ddr}[swap]{\delta}  \arrow{dr}{\delta} & & & & \sheaf{V}_{F_3} \\
& \sheaf{V}_{F_2} \arrow{d}{\delta} \arrow{r}{=} & \sheaf{V}_{F_2} \arrow{dr}{\delta} \arrow{r}{=} & \sheaf{V}_{F_2} \arrow{ur}{\gamma} &  \\
& \sheaf{V}_{F_0} \arrow{ur}{\gamma} \arrow{rr}{=} & & \sheaf{V}_{F_0} \arrow{u}{\gamma} \arrow{uur}[swap]{\gamma}& 
\end{tikzcd}$$
The bisheaf axioms and monotonicity give equality of the following linear maps from $\sheaf{V}_{F_1}$ to $\sheaf{V}_{F_3}$:
\begin{align*}
&\phantom{=} \sheaf{V}_{F_1} \xrightarrow{\delta} \sheaf{V}_{F_0} \xrightarrow{=} \sheaf{V}_{F_0} \xrightarrow{\gamma} \sheaf{V}_{F_3} \\
& = \sheaf{V}_{F_1} \xrightarrow{\delta} \sheaf{V}_{F_2} \xrightarrow{\delta} \sheaf{V}_{F_0} \xrightarrow{\gamma} \sheaf{V}_{F_2} \xrightarrow{\gamma} \sheaf{V}_{F_3}, \\ 
& = \sheaf{V}_{F_1} \xrightarrow{\delta} \sheaf{V}_{F_2} \xrightarrow{=} \sheaf{V}_{F_2} \xrightarrow{\gamma} \sheaf{V}_{F_3} \quad \text{(by monotonicity)},\\  
&= \sheaf{V}_{F_1} \xrightarrow{\delta} \sheaf{V}_{F_2} \xrightarrow{\delta} \sheaf{V}_{F_0} \xrightarrow{\gamma} \sheaf{V}_{F_2} \xrightarrow{\delta} \sheaf{V}_{F_0} \xrightarrow{\gamma} \sheaf{V}_{F_2} \xrightarrow{\gamma} \sheaf{V}_{F_3} \quad \text{(by monotonicity)},\\  
&= \sheaf{V}_{F_1} \xrightarrow{\delta} \sheaf{V}_{F_0} \xrightarrow{\gamma} \sheaf{V}_{F_2} \xrightarrow{\delta} \sheaf{V}_{F_0}  \xrightarrow{\gamma} \sheaf{V}_{F_3}.\\ 
\end{align*}
This verifies the identity.
\end{proof}

If $(F_1, F_2, F_3)$ is a collinear triple in a star neighborhood $\Star(F_0)$, then we say $F_2$ is in \defined{general position} if the dimension of $F_2$ is maximal among those facets collinear between $F_1$ and $F_3$.  

\begin{lemma}
\label{transgen}
A monotonic bisheaf $\sheaf{V}$ on $\Facet$ is transitive if and only if the transitivity axiom holds for collinear triples $(F_1, F_2, F_3)$ with $F_2$ in general position.
\end{lemma}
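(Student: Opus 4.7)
The forward direction is trivial; the work is in the converse. Assuming the transitivity axiom for all collinear triples $(F_1, F_2, F_3)$ with $F_2$ in general position, the plan is to deduce it for an arbitrary collinear triple by downward induction on $\dim(F_2)$. The base case $\dim(F_2) = \dim(E)$ is automatic: a chamber on a segment through it has maximal possible dimension, hence is in general position.

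For the inductive step, take a collinear triple $(F_1, F_2, F_3)$ in $\Star(F_0)$ with $\dim(F_2) = k < \dim(E)$, assuming transitivity known for collinear triples whose middle facet has dimension strictly greater than $k$. If $F_2$ is in general position, the hypothesis applies, so suppose not. Fix $p_i \in F_i$ with $p_2 \in \overline{p_1 p_3}$. Since $\Star(F_0)$ is convex, the whole segment lies in $\Star(F_0)$, and along it the facets alternate between ``open-interval'' facets (each met in a positive interval, all of one common maximal dimension equal to that of the smallest flat of the arrangement containing the segment) and ``crossing'' facets (each met at a single point, of strictly smaller dimension, and lying in the closure of both flanking open-interval facets). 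Since $F_2$ has non-maximal dimension it is a crossing; let $F_2^-$ denote the open-interval facet adjacent to $F_2$ on the $F_1$-side of $p_2$. Then $F_2 \leq F_2^-$, $\dim(F_2^-) > k$, and $F_2^- \in \Star(F_0)$.

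With this $F_2^-$ the conclusion falls out in three steps. Both $(F_1, F_2^-, F_3)$ and $(F_1, F_2^-, F_2)$ are collinear in $\Star(F_0)$ with middle facet $F_2^-$ of dimension greater than $k$, so by the inductive hypothesis each satisfies the transitivity axiom. Separately, $F_0 \leq F_2 \leq F_2^-$ together with the $\delta$-composition axiom and monotonicity gives
$$\phi_{F_2^-, F_3} = \phi_{F_2, F_3} \circ \phi_{F_2^-, F_2},$$
a one-sided identity in the spirit of Lemma~\ref{DUseg} that uses only the ``down'' part of its hypothesis. Chaining the three identities yields
$$\phi_{F_1, F_3} = \phi_{F_2^-, F_3} \circ \phi_{F_1, F_2^-} = \phi_{F_2, F_3} \circ \phi_{F_2^-, F_2} \circ \phi_{F_1, F_2^-} = \phi_{F_2, F_3} \circ \phi_{F_1, F_2},$$
completing the induction.

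The main obstacle will be the geometric preliminary in the second paragraph: verifying carefully that in a locally finite hyperplane arrangement a segment visits facets in the alternating crossing/open-interval pattern, so that a non-general-position facet on the segment is necessarily a single-point crossing flanked by strictly higher-dimensional open-interval facets containing it in their closure. Once that local picture is established, the rest is a short diagram chase resting on the one-sided identity and the general-position hypothesis.
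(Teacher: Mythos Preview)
Your argument is correct, and it is close in spirit to the paper's proof but organized differently.  The paper does not use induction: it takes \emph{both} neighboring facets $F_2^-$ and $F_2^+$ flanking $F_2$ on the segment, observes that each is in general position, and then unwinds
\[
\phi_{F_2,F_3}\circ\phi_{F_1,F_2}
=\phi_{F_2^+,F_3}\circ\phi_{F_2,F_2^+}\circ\phi_{F_2^-,F_2}\circ\phi_{F_1,F_2^-}
\]
using the general-position hypothesis four times together with the full two-sided Lemma~\ref{DUseg} for the middle step.  Your version uses only one flanking facet $F_2^-$ and a one-sided variant of Lemma~\ref{DUseg} (requiring only $F_2\leq F_2^-$, not $F_2\leq F_3$), which is a genuine simplification.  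On the other hand, your inductive framing is unnecessary: by your own geometric description, every open-interval facet on the segment already has the common maximal dimension, so $F_2^-$ is itself in general position and the standing hypothesis applies directly to both triples $(F_1,F_2^-,F_3)$ and $(F_1,F_2^-,F_2)$ without any appeal to an inductive hypothesis.  Dropping the induction would make your proof strictly shorter than the paper's.
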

\begin{proof}
Consider any collinear triple $(F_1, F_2, F_3)$ in a star neighborhood $\Star(F_0)$.  If $F_1 = F_2$ or $F_2 = F_3$, then the transitivity condition follows from the monotonicity condition.  So we are left to consider collinear triples with $F_1 \neq F_2$ and $F_2 \neq F_3$.  

Let $\lambda$ be a line in $E$, continuously and injectively parameterized so that $\lambda(1) \in F_1$ and $\lambda(2) \in F_2$ and $\lambda(3) \in F_3$.  Then, for sufficiently small and positive $\epsilon$, $\lambda(2 - \epsilon)$ and $\lambda(2 + \epsilon)$ belong to facets $F_2^-$ and $F_2^+$, respectively, and $F_2 \leq F_2^+$ and $F_2 \leq F_2^-$.  Moreover, $F_2^-$ and $F_2^+$ are facets in general position (among those collinear between $F_1$ and $F_3$).

Now we may use the transitivity axiom for triples whose middle-term is in general position, in order to prove the transitivity axiom for $(F_1, F_2, F_3)$.  \begin{align*}
\phi_{F_2, F_3} \circ \phi_{F_1, F_2} &= \phi_{F_2^+, F_3} \circ \phi_{F_2, F_2^+} \circ \phi_{F_2^-, F_2} \circ \phi_{F_1, F_2^-}, \text{\quad (by general position of $F_2^\pm$)} \\
&= \phi_{F_2^+, F_3} \circ  \phi_{F_2^-, F_2^+} \circ \phi_{F_1, F_2^-}, \quad \text{ (by Lemma \ref{DUseg})} \\
&= \phi_{F_2^-, F_3} \circ \phi_{F_1, F_2^-}, \quad \text{(by general position of $F_2^+$)} \\
&= \phi_{F_1, F_3}. \quad \text{(by general position of $F_2^-$)} 
\end{align*}
\end{proof}

\subsection{The invertible axiom}

As before, fix a locally finite real hyperplane arrangement $\Hyper \subset E$, with resulting facet decomposition $\Facet$.  A \defined{flat} in $E$ is a nonempty intersection of hyperplanes from $\Hyper$.  If $F \in \Facet$, and $L$ is a flat, we say that $F$ spans $L$ if $L$ is the intersection of all hyperplanes containing $F$.  Note that $\dim(L) = \dim(F)$.  

\begin{definition}
Let $F_1, F_2$ be facets, with $\dim(F_1) = \dim(F_2) = r \geq 1$.  We say that $F_1$ \defined{opposes} $F_2$ if they span the same flat $L$, and there exists a $(r-1)$-dimensional facet $F_0$ such that $F_0 \leq F_1$ and $F_0 \leq F_2$, and $F_1, F_2$ lie on opposite sides of $F_0$.  In particular, there exists a hyperplane $H_0$ containing $F_0$, such that $F_1$ and $F_2$ lie on opposite side of $H_0$.
\end{definition}

When $F_1$ opposes $F_2$ in $\Apart$, and $\dim(F_1) = \dim(F_2) = r$, the intersection $\bar F_1 \cap \bar F_2$ contains a unique $(r-1)$-dimensional facet $F_0$.  Thus $F_1$ and $F_2$ uniquely determine $F_0$, when $F_1$ and $F_2$ are opposite.  We say that $F_1$ opposes $F_2$ \defined{through} $F_0$ in this case, and we write $F_1 \mid_{F_0} F_2$.  See Figure \ref{OppFig}.
\begin{figure}[htbp!]

\begin{tikzpicture}[scale=2.5]
\clip (0,0) circle (1.5);
\pgfmathsetmacro{\e}{0.33333};
\foreach \t in {0,120,240}
\foreach \j in {-3,...,3}
{
\begin{scope}[rotate=\t]
\begin{scope}[yshift = \j cm]
\%draw[cyan!50, very thin] (0:-5) -- (0:5);
\filldraw[fill=gray, opacity=0.2] (-5, \e) -- (5,\e) -- (5,-\e) -- (-5,-\e) -- cycle;
\draw (-5, -\e) -- (5,-\e);
\draw (-5, \e) -- (5,\e);
\end{scope}
\end{scope}
}
\draw (0,0) node {$F_1$};
\draw (30:0.45) node {$F_2$};
\draw (60:0.55) node {$F_3$};

\begin{scope}[xshift = 0.1925cm, yshift = -0.34cm]
\draw[line width = 2pt] (-0.77,0) + (60:0.03) -- +(60:0.36) node[below = 4pt] {$A$};
\draw[line width = 2pt] (-0.77,0.008) + (180:0.03) to node[below] {$C$} +(180:0.36);
\draw[line width = 2pt] (-0.573,0.34) + (60:0.03) -- +(60:0.36) node[below = 4pt] {$B$};
\end{scope}
\end{tikzpicture}

\caption{Among the 2-dimensional facets, $F_1$ opposes $F_2$, and $F_2$ opposes $F_3$, but $F_1$ does not oppose $F_3$.  Among the 1-dimensional facets, $A$ opposes $B$, but $A$ does not oppose $C$ (since $A$ and $C$ do not span the same flat).}

\label{OppFig}

\end{figure}
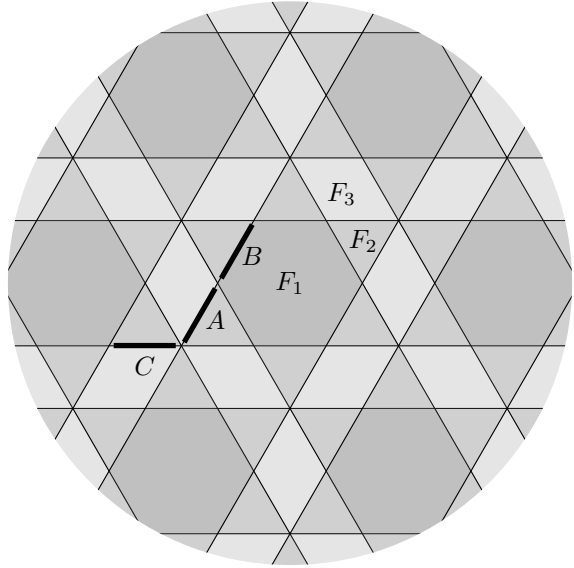

\begin{definition}
A bisheaf $(\sheaf{V}, \gamma, \delta)$ on $\Facet$ is called \defined{invertible} if $F_1 \mid_{F_0} F_2$ implies that the map 
$$\gamma_{F_0,F_2} \circ \delta_{F_1,F_0} \From \sheaf{V}_{F_1} \To \sheaf{V}_{F_2}$$
is an isomorphism.
\end{definition}

\subsection{Perverse sheaves}

Let $\Hyper$ be a locally finite real hyperplane arrangement in the real affine space $E$.  Let $\Facet$ be the resulting facet decomposition.  Let $E_\CC$ be the complexification of $E$, and let $\Hyper_\CC$ be the set of complexifications of the real hyperplanes from $\Hyper$.  Thus $\Hyper_\CC$ is a locally finite complex hyperplane arrangement in the complex affine space $E_\CC$.

Let $\Cat{Perv}(E_\CC, \Hyper_\CC)$ be the abelian category of perverse sheaves on $E_\CC$, smooth with respect to the stratification induced by $\Hyper_\CC$.  We consider sheaves with coefficients in finite-dimensional complex vector spaces, for convenience.  The main theorem of Kapranov and Schechtman \cite[Theorem 8.1, \S 9.B]{KapSch} is the following.
\begin{thm}
\label{kapsch}
The category $\Cat{Perv}(E_\CC, \Hyper_\CC)$ is equivalent to the category of monotonic, transitive, invertible bisheaves of finite-dimensional $\CC$-vector spaces on $\Facet$.  
\end{thm}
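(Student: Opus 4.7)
My plan is to construct the equivalence in Theorem \ref{kapsch} as a geometric functor $\Phi \From \Cat{Perv}(E_\CC, \Hyper_\CC) \To \Cat{BiSh}(\Facet, \Omega)$ together with a combinatorial inverse $\Psi$, and then to verify that the image of $\Phi$ lands in the full subcategory cut out by the three axioms and that $\Psi$ inverts $\Phi$. For a perverse sheaf $\sheaf{P}$, I would define $\Phi(\sheaf{P})_F$ at each facet $F$ to be the stalk at an interior point $p \in F$ of a suitable ``vanishing'' object -- concretely, the cohomology (concentrated in a single degree by perversity) of $\sheaf{P}$ on a small open Milnor-type polydisc transverse to the complexified flat spanned by $F$, relative to its boundary on the side determined by $F$. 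Equivalently, one may pass through a Cousin-type filtration associated to a Morse function adapted to the real arrangement, so that each real facet contributes one graded piece to the cohomology.

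Next I would define the structural morphisms. For $F_1 \leq F_2$ with a generic path from $p_1 \in F_1$ to $p_2 \in F_2$, the transverse polydisc at $p_2$ embeds (after a canonical deformation) into the polydisc at $p_1$, giving a ``generalization'' map that I take to be $\delta_{F_2,F_1}$; the dual cogeneralization furnishes $\gamma_{F_1,F_2}$. With these in hand, I would check the three axioms directly from the geometry: monotonicity $\gamma_{F_1,F_2}\circ\delta_{F_2,F_1} = \Id$ because moving transversally out of $F_1$ and back to $F_2$ is homotopic to the identity on the polydisc of $F_2$; transitivity because along a straight line segment the nested polydiscs deform coherently (using Lemma \ref{DUseg} and Lemma \ref{transgen} to reduce to the generic case); invertibility because crossing through a codimension-one facet $F_0$ between opposing facets $F_1, F_2$ induces a local monodromy that is evidently invertible on the Milnor-fiber cohomology.

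For the inverse $\Psi$, I would build the perverse sheaf from a bisheaf $(\sheaf{V},\gamma,\delta)$ by an explicit Cousin-style complex on $E_\CC$ whose $r$-th term is a direct sum, over $r$-dimensional real facets $F$, of costandard extensions along $F_\CC$ with coefficient $\sheaf{V}_F$, and whose differentials are assembled from compositions $\gamma_{F,F'}\circ\delta_{F',F}$ for adjacent facets with the appropriate signs. Here the transitivity axiom is precisely what forces $d^2 = 0$ (the collinear triple corresponds to the contributing paths in $d^2$), invertibility ensures the resulting complex is constructible with respect to the stratification, and monotonicity makes the inclusion/projection pair splitting work out, giving the right stalks and costalks.

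The main obstacle, and really the heart of the Kapranov--Schechtman theorem, is verifying that $\Psi(\sheaf{V})$ actually lies in the perverse t-structure, i.e., that it satisfies the support and cosupport conditions on each stratum $F_\CC$. This requires showing that the local contribution at $F$ -- computed by taking the hypercohomology of $\Psi(\sheaf{V})$ on a normal slice -- collapses to a single degree, and that the answer is canonically $\sheaf{V}_F$. One shows this by an induction on the dimension of facets, repeatedly using the invertibility axiom to trivialize monodromy on link spheres and the transitivity axiom to contract redundant compositions, so that the local complex becomes quasi-isomorphic to $\sheaf{V}_F$ placed in the correct degree. Once this is established, the natural isomorphisms $\Phi\Psi \cong \Id$ and $\Psi\Phi \cong \Id$ follow by matching their effect facet-by-facet, and the equivalence of Theorem \ref{kapsch} is complete.
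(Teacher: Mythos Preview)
Your proposal is misdirected in a basic way: the paper does not prove Theorem~\ref{kapsch} at all. The statement is introduced with the words ``The main theorem of Kapranov and Schechtman \cite[Theorem 8.1, \S 9.B]{KapSch} is the following,'' and no proof is given in the present paper --- it is simply cited as an external input. Everything downstream (Theorem~\ref{transthm}, Proposition~\ref{invmod}, Theorem~\ref{mainth1}) takes this equivalence as a black box and translates the combinatorial bisheaf conditions into module-theoretic ones over $\AA_W$. So there is nothing to compare your argument against here.

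That said, your sketch is broadly in the spirit of what Kapranov and Schechtman actually do in \cite{KapSch}: they use hyperbolic stalks (your ``Milnor-type polydisc relative to its boundary'') to build the functor $\Phi$, and a Cousin-type complex to go back. But as written your outline is far from a proof. The step you flag as ``the main obstacle'' --- showing that $\Psi(\sheaf{V})$ is perverse --- is genuinely hard and occupies a large portion of \cite{KapSch}; your inductive sentence does not come close to discharging it. Similarly, the claim that ``transitivity is precisely what forces $d^2=0$'' and that ``invertibility ensures the resulting complex is constructible'' are slogans, not arguments: one needs careful sign conventions, a precise formulation of the Cousin complex on the complexified arrangement, and a nontrivial analysis of the local structure near each flat. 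If you want to actually prove this theorem, you should read \cite{KapSch} rather than reinvent it; for the purposes of the present paper, a citation is all that is required.
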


Note that the transitive and invertible properties depend not just on the poset $\Facet$, but on a bit more geometric structure (a notion of collinearity, at least).  In \cite{KapSch}, it is noted that the notions of transitivity and invertibility can be defined if one considers not just the poset $\Facet$ but also the oriented matroid structure arising from $\Hyper$.  

From this theorem, we believe it makes sense to {\em define} the category $\Cat{Perv}(E,\Hyper)$ (without complexification) as the full subcategory of bisheaves $\Cat{BiSh}(\Facet, \Omega)$ whose objects are monotonic, transitive, and invertible.  Here $\Omega$ denotes the category of finite-dimensional complex vector spaces.  With this notation, the main theorem of \cite{KapSch} is the construction of an equivalence of abelian categories,
$$\Cat{Perv}(E_\CC, \Hyper_\CC) \xrightarrow{\sim} \Cat{Perv}(E,\Hyper).$$
The category on the left is of considerable interest to those who study the topology of algebraic varieties (and related representation theory).  The category on the right is amenable to study with linear algebra and combinatorics.

If a group $G$ acts on $E$, and stabilizes (i.e., permutes) the set of hyperplanes $\Hyper$, one may consider $G$-equivariant perverse sheaves.  In the real, combinatorial setting, we define $\Cat{Perv}_G(E,\Hyper)$ to be the full subcategory of $\Cat{BiSh}_G(\Facet, \Omega)$ consisting of monotonic, transitive, invertible bisheaves.  In the complex setting, a $G$-equivariant perverse sheaf is a perverse sheaf $\sheaf{S} \in \Cat{Perv}(E_\CC, \Hyper_\CC)$ endowed with isomorphisms $\iota_g \From \sheaf{S} \To g^\ast \sheaf{S}$, where $g^\ast$ denotes the pullback via $g \From E_\CC \To E_\CC$.  This family of isomorphisms is required to satisfy compatibility conditions with the group structure on $G$, of course.  Note here that we consider $G$ as a discrete group, without any structure as a variety.  Since the $G$-action respects complexification, the equivalence of Kapranov and Schechtman gives an equivalence of abelian categories,
$$\Cat{Perv}_G(E_\CC, \Hyper_\CC) \xrightarrow{\sim} \Cat{Perv}_G(E,\Hyper).$$

\section{$W$-equivariant perverse sheaves on the Coxeter arrangement}

Let us return to the case of a {\em finite} Coxeter system $(W,S)$, with $E$ the (real) reflection representation of $W$ and $\Hyper$ the set of reflection hyperplanes, and $C$ the chamber associated to the set $S$.  Let $\Facet$ be the resulting facet decomposition of $E$.   Recall that $\Lambda$ denotes the set of all subsets of $S$, which is in bijection with the set of facets in $\bar C$.  Here we give a concrete description of the category of $W$-equivariant perverse sheaves on $E_\CC$, smooth with respect to the hyperplane arrangement $\Hyper_\CC$. 

 The case of an {\em infinite affine} Coxeter system should not be much more difficult, but we omit it in order to keep notation a bit simpler.

The theorem of Kapranov and Schechtman gives an equivalence,
$$\Cat{Perv}_W(E_\CC, \Hyper_\CC) \xrightarrow{\sim} \Cat{Perv}_W(E,\Hyper).$$
The category on the right is the full subcategory of $\Cat{BiSh}_W(\Facet, \Omega)$, consisting of those $W$-equivariant bisheaves which are monotonic, transitive, and invertible.  The coefficient category $\Omega$ is the category of finite-dimensional complex vector spaces.

\subsection{The condition of invertibility}

The following result translates the condition of invertibility into a condition on modules over $\AA_W^{\mon}$.  We consider invertibility before transitivity, since the corresponding condition on modules is simpler.  

Recall that $\AA_W^{\mon}$ is the $\CC$-algebra $\CC \langle e_I : I \in \Lambda,\ s : s \in S \rangle$ modulo the relations
\begin{enumerate}
\item
If $I \supset J$ then $e_I e_J = e_J$.
\item
If $s \in I$ then $s e_I = e_I s$.
\item
$s^2 = 1$ for all $s \in S$.
\item
$(s s')^{m_{s,s'}} = 1$ for all $s,s' \in S$.
\end{enumerate}
If $(\sheaf{V}, \gamma, \delta, \eta)$ is a monotonic, $W$-equivariant bisheaf on $\Facet$, then we associated to $\sheaf{V}$ the vector space $V = \sheaf{V}_S$, and $e_I$ acts on $V$ by the composition $\delta_I \circ \gamma_I$, an idempotent endomorphism of $V$.  Each $s \in S$ acts on $V$ by putting $s(v) = \eta(s) v$.

Relations (3) and (4) are those in the Coxeter group $W$.  Thus there is a canonical homomorphism $\CC[W] \To \AA_W^{\mon}$ and we view every element $w \in W$ as an element of $\AA_W^{\mon}$ accordingly.

Proposition \ref{monequiv} demonstrates that this gives an equivalence of categories, from $\Cat{BiSh}_W^{\mon}(\Facet, \Omega)$ to $\Cat{Mod}^{\fd}(\AA_W^{\mon})$.  (We restrict to finite-dimensional modules here.)  To incorporate invertibility, we require a combinatorial lemma.

\begin{lemma}
If $F_1 \mid_{F_0} F_2$ are opposite faces, with $F_1 \subset \bar C$, then there exist $I,J,K \in \Lambda$ and $w \in W_K$ satisfying the following conditions.
\begin{enumerate}
\item
$\# I = \# K - 1 = \# J$ and  $I \cup J \subset K$.
\item
$w \in W_{K}$ satisfies $w J w^{-1} = I$.
\item
$F_1 = F_I$ and $F_0 = F_K$ and $F_2 = w(F_J)$.
\end{enumerate}
\end{lemma}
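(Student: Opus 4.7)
The plan is to pin down $F_2$ via the star of $F_0$, which transversely is the Coxeter complex of the parabolic subgroup $W_K$, and then read off $w$ and $J$ from the opposition involution of $W_K$.

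First, I read off $I$ and $K$. Since $F_1 \subset \bar C$, the poset isomorphism $\Lambda \xrightarrow{\sim} \Facet^+$ provides a unique $I$ with $F_1 = F_I$. Because $F_0 \leq F_1 \subset \bar C$, also $F_0 \subset \bar C$, so $F_0 = F_K$ for a unique $K$; and the isomorphism reverses inclusion, giving $I \subset K$. The dimension formula $\dim F_{I'} = \dim E - |I'|$ combined with $\dim F_0 = \dim F_1 - 1$ forces $|K| = |I|+1$, handling the inclusion and dimension parts of conditions (1) and (3).

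Second, I use the standard transverse picture of the star of $F_0$. The pointwise stabilizer of $F_0$ is $W_K$, and a transverse slice at a point of $F_0$ (equivalently the quotient of $E$ by the affine span of $F_0$) produces a vector space $V$ of dimension $|K|$ on which $W_K$ acts by its reflection representation. Under this identification, facets of $\Facet$ containing $F_0$ in their closure correspond bijectively to facets of the Coxeter complex of $W_K$ on $V$, with $F_0$ sitting at the origin. Since $\dim F_1 = \dim F_0 + 1$, both $F_1$ and $F_2$ descend to rays from the origin in $V$. The hypotheses that $F_1, F_2$ span the same flat in $E$ and are separated by some hyperplane through $F_0$ translate, respectively, into the statements that they lie on the same line through the origin in $V$ and on opposite sides of the origin. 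This pins down $F_2 = -F_I$ in $V$.

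Third, I invoke the opposition involution $\iota_K \colon K \to K$ of the finite Coxeter group $W_K$, characterized by $\iota_K(s) = w_K s w_K^{-1}$, where $w_K$ is the longest element of $W_K$. The standard fact $w_K \bar C = -\bar C$ yields the identity $-F_I = w_K F_{\iota_K(I)}$ in $V$. Setting $J = \iota_K(I)$ and $w = w_K$ immediately gives $F_2 = wF_J$, while $w \in W_K$, $J \subset K$, and $|J| = |I|$ are clear; so conditions (1) and (3) are complete. The conjugation identity $wJw^{-1} = I$ is instantaneous from $w_K^2 = 1$: for each $s \in I$,
$$
w_K \iota_K(s) w_K^{-1} = w_K (w_K s w_K^{-1}) w_K^{-1} = w_K^2 s w_K^{-2} = s,
$$
and ranging $s$ over $I$ gives $w \iota_K(I) w^{-1} = I$.

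The main obstacle is the second step --- formally identifying the star of $F_0$ with the Coxeter complex of $W_K$ on $V$ and checking that ``$F_1$ opposes $F_2$ through $F_0$'' in the text's sense corresponds exactly to $F_1, F_2$ being antipodal rays in $V$. Once that translation is in place, the remainder is routine opposition-involution bookkeeping for finite Coxeter groups.
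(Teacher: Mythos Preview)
Your argument is correct and genuinely different from the paper's. The paper proceeds abstractly: it writes $F_2 = w(F_J)$ for \emph{some} $w \in W$ and $J \in \Lambda$ using that $\bar C$ is a fundamental domain, then whittles down the possibilities---the closure condition $F_0 \subset \bar F_2$ forces $w \in W_K$ and $J \subset K$, while ``same flat'' forces $w W_J w^{-1} = W_I$---and finally invokes a lemma of Solomon to replace $w$ within its $(W_I, W_J)$-double coset so that $w J w^{-1} = I$ holds on the nose (not merely for the generated subgroups). Your approach instead passes to the transverse Coxeter complex of $W_K$ at $F_0$, recognizes that opposition through $F_0$ means $F_2 = -F_1$ there, and then reads off the explicit answer $w = w_K$, $J = \iota_K(I)$ from the standard identity $w_K F_I = -F_{\iota_K(I)}$. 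This is more constructive: you exhibit a canonical $(w,J)$ rather than asserting existence, and you avoid the external citation to Solomon. The paper's route, on the other hand, stays closer to the raw definition of opposition and does not require setting up the link identification or invoking properties of the longest element. Your self-identified ``main obstacle'' (that opposition through $F_0$ corresponds exactly to antipodal rays in the transverse slice) is indeed the only point needing care, but it follows directly from the three ingredients of the definition: same flat gives same line through the origin, codimension one gives rays, and separation by a hyperplane through $F_0$ forces the rays to be opposite.
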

\begin{proof}
Since $F_1 \subset \bar C$, we have $F_1 = F_I$ for a unique $I \in \Lambda$.  The opposition condition, $F_1 \mid_{F_0} F_2$ states that $\dim(F_2) = \dim(F_1)$, $F_2$ and $F_1$ span the same flat, and $\bar F_2$ contains $F_0$.  The condition $\dim(F_2) = \dim(F_1)$ holds if and only if $F_2 = w(F_J)$ for some $J \in \Lambda$ with $\# J = \# I$.  The conditions $\dim(F_1) = \dim(F_0) + 1$ and $F_0 \subset \bar F_1$ imply that $F_0 = F_K$ for some $K \in \Lambda$ satisfying $\# K = \# I + 1$ and $I \subset K$.  The condition that $F_1$ and $F_2$ span the same flat implies that, for all $t \in I$, $t(F_2) = F_2$.  Equivalently, for all $t \in I$, $w^{-1} t w (F_J) = F_J$.  Equivalently, $W_I = w W_J w^{-1}$.

The condition that $\bar F_2$ contains $F_0$ is equivalent to the condition that $\bar F_J$ contains $w^{-1}(F_0)$.  But this occurs if and only if $\bar F_J \supset F_0$ and $w^{-1}(F_0) = F_0$, since the closed chamber $\bar C$ is a fundamental domain for $W$ and $\bar F_J \subset \bar C$.  Thus $\bar F_2 \supset F_0$ if and only if $w \in W_{K} \text{ and } J \subset K$.  

Thus the opposition $F_1 \mid_{F_0} F_2$ gives subsets $I,J,K \in \Lambda$ such that
$$\# I = \# K - 1 = \# J \text{ and } I \cup J \subset K$$
and there exists $w \in W_K$ satisfying $w W_J w^{-1} = W_I$.  The correspondence sends $(I,J,K,w)$ to $F_1 = F_I$ and $F_0 = F_K$ and $F_2 = w (F_J)$, so only the double-coset of $w$ in $W_I \backslash W_K / W_J$ is relevant.  Finally, we note that if $W_I = w W_J w^{-1}$, then there exists $w'$ in the same double-coset as $w$ such that $I = w J w^{-1}$ (see \cite[Lemma 2]{Sol}).  Replacing $w$ by $w'$ gives the result.
\end{proof}

\begin{figure}
\begin{center}
\begin{tikzpicture}
\fill[black!20] (0,0) -- (2,0) arc (0:60:2) -- cycle;
\draw (30:1) node {$\bar C$};
\foreach \a in {0,60,120,180,240,300}
{ \draw (0,0) -- (\a:2); }
\draw[line width = 2pt] (0.2,0) to node[below] {$F_1 = F_{\{t\}}$} (1.9,0);
\draw[line width = 2pt] (60:0.2) to node[left] {$F_{\{s\}}$} (60:1.9);
\fill (0,0) circle (0.1);
\draw (210:0.6) node[rotate=30] {$F_{\{ s, t \}}$};
\draw[line width = 2pt] (-0.2,0) to node[above] {$F_2$} (-1.9,0);

\draw[<->] (2.2, -0.2) arc (-90:90:0.2);
\draw (2.4,0) node[right] {$t$};

\begin{scope}[rotate=60]
\draw[<->] (2.2, -0.2) arc (-90:90:0.2);
\draw (2.4,0) node[right, rotate=60] {$s$};
\end{scope}

\end{tikzpicture}
\end{center}
\caption{Pictured are opposite facets $F_1 \mid_{F_{\{s,t\}}} F_2$.  If $w = st$, then we find that $w(F_{\{s\} }) = F_2$.  The corresponding opposition in $W$ is $\{ t \} \mid_w \{ s \}$.}
\label{oppofig}
\end{figure}
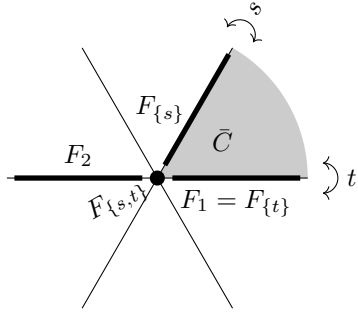

Motivated by this lemma, we write $I \mid_w J$, and say that $I$ and $J$ are {\em opposite via $w$} if $I,J \in \Lambda$, $w \in W_K$ for some $K \in \Lambda$ containing $I \cup J$, and $w J w^{-1} = I$, and $\# I = \# K - 1 = \# J$.  As an important special case, note that $\emptyset \mid_s \emptyset$ for all $s \in S$; this corresponds to the opposition between the Weyl chamber $C$ and its image under any $s \in S$.

\begin{proposition}
\label{invmod}
Let $(\sheaf{V}, \gamma, \delta, \eta)$ be a monotonic, $W$-equivariant bisheaf on $\Facet$, and $V$ the associated $\AA_W^{\mon}$-module.  Then $\sheaf{V}$ is invertible if and only if, for all $I \mid_w J$, $e_J w e_I$ is an isomorphism from $e_I V$ to $e_J V$.
\end{proposition}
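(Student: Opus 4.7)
The plan is to translate the invertibility axiom for the bisheaf $\sheaf{V}$ on $\Facet$ into an algebraic condition on $V = \sheaf{V}_S$. First, I will use the $W$-equivariance to reduce to oppositions $F_1 \mid_{F_0} F_2$ with $F_1 \in \bar{C}$: since the isomorphisms $\eta_{g, F}$ intertwine $\gamma$ and $\delta$ across the $W$-action, the invertibility condition on one opposition is equivalent to that on any $W$-translate. By the preceding lemma, oppositions with $F_1 \in \bar{C}$ are parametrized by quadruples $(I, J, K, w)$ satisfying $I \mid_w J$, with $F_1 = F_I$, $F_0 = F_K$, $F_2 = w(F_J)$. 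The task reduces to showing that $\gamma_{F_K, w F_J} \circ \delta_{F_I, F_K} \colon \sheaf{V}_I \to \sheaf{V}_{w F_J}$ is an isomorphism iff $e_J w e_I \colon e_I V \to e_J V$ is.

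Next, I will express both structure maps using the canonical embeddings $\iota_I = \delta_{F_I, F_S} \colon \sheaf{V}_I \hookrightarrow V$ and surjections $\pi_I = \gamma_{F_S, F_I} \colon V \twoheadrightarrow \sheaf{V}_I$, which satisfy $\pi_I \iota_I = \Id$ and $\iota_I \pi_I = e_I$ by monotonicity. The bisheaf composition axioms then give the identities $\delta_{F_I, F_K} = \pi_K \iota_I$ and $\gamma_{F_K, F_J} = \pi_J \iota_K$ for $I, J \subset K$, while the equivariance axiom (Eq3) applied to $w \in W_K$ (which stabilizes $F_K$) rewrites $\gamma_{F_K, w F_J} = \eta_{w, F_J} \circ \gamma_{F_K, F_J} \circ \eta_{w, F_K}^{-1}$ and $\eta_{w, F_K} = \pi_K \, \eta(w) \, \iota_K$, where $\eta(w) = \eta_{w, F_S}$ is the image of $w$ under the canonical map $\CC[W] \to \AA_W^{\mon}$.

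Finally, identifying the target $\sheaf{V}_{w F_J}$ with $\sheaf{V}_J$ via $\eta_{w, F_J}$, the invertibility map becomes a composition $\tilde\mu \colon \sheaf{V}_I \to \sheaf{V}_J$ which, after substitution, takes the form $\tilde\mu = \pi_J \cdot e_K \cdot w^{\pm 1} \cdot e_K \cdot \iota_I$ on $V$. Two relations of $\AA_W^{\mon}$ close the argument: relation (2) extended from $s \in I$ to all $w \in W_K$ gives $w \cdot e_K = e_K \cdot w$, collapsing the sandwich to $w^{\pm 1} \cdot e_K$; and relation (1) with $K \supset I$ gives $e_K e_I = e_I$, hence $e_K \iota_I = \iota_I$. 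Thus $\tilde\mu = \pi_J \cdot w^{\pm 1} \cdot \iota_I$, which under the identifications $\iota_I, \pi_I$ (and similarly for $J$) is an isomorphism iff $e_J \cdot w^{\pm 1} \cdot e_I \colon e_I V \to e_J V$ is. The opposition symmetry $I \mid_w J \iff J \mid_{w^{-1}} I$ can be used to reconcile the sign choice with the statement of the proposition. The principal technical hurdle is the careful bookkeeping of the equivariance factors $\eta_{w, F_J}$ and $\eta_{w, F_K}^{\pm 1}$ in the rewriting of $\gamma_{F_K, w F_J}$, to ensure that the outer factor exactly cancels the identification $\sheaf{V}_{wF_J} \cong \sheaf{V}_J$ and the inner factor is absorbed by $e_K$ via the commutation and idempotent relations.
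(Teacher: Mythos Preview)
Your approach is essentially identical to the paper's: reduce by $W$-equivariance to oppositions $F_1 \mid_{F_0} F_2$ with $F_1 \subset \bar C$, invoke the preceding lemma to parametrize these by data $(I,J,K,w)$ with $I \mid_w J$, identify $\sheaf{V}_{wF_J}$ with $e_J V$ via $\eta_w$, and read off the map in the $\AA_W^{\mon}$-module. Your explicit use of $\iota_I = \delta_{F_I,F_S}$, $\pi_I = \gamma_{F_S,F_I}$, and of relations (1) and (2) to collapse $e_K w^{\pm 1} e_K$ to $w^{\pm 1} e_K$ and then absorb $e_K$ into $\iota_I$, is more detailed than the paper (which simply appeals to the functor from the restriction theorem), but the substance is the same.

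One caution on your last step. A careful run of your own computation gives $\tilde\mu = \pi_J\, w^{-1}\, \iota_I$, i.e.\ the condition ``$e_J w^{-1} e_I : e_I V \to e_J V$ is an isomorphism for all $I \mid_w J$''. Your appeal to $I \mid_w J \Leftrightarrow J \mid_{w^{-1}} I$ reparametrizes this as ``$e_I w e_J : e_J V \to e_I V$ is an isomorphism for all $I \mid_w J$'', which is not literally the proposition's formula: source and target are swapped along with $I$ and $J$. The paper's proof is terse at exactly this point as well. The reconciliation does go through, but it uses more than the single symmetry you cite: one needs that the opposition relation on \emph{facets} is symmetric (so $F_2 \mid_{F_0} F_1$ is also in the list), together with the non-uniqueness of $w$ in the lemma (only its $W_I$--$W_J$ double coset matters). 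That deserves a sentence rather than a parenthetical ``$\pm 1$''.
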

\begin{proof}
$\sheaf{V}$ is invertible if and only if, for all opposite faces $F_1 \mid_{F_0} F_2$, the linear map 
$$\gamma_{F_0, F_2} \circ \delta_{F_1, F_0} \From \sheaf{V}_{F_1} \To \sheaf{V}_{F_2}$$
is invertible.  By $W$-equivariance, it suffices to look only at such opposite faces $F_1 \mid_{F_0} F_2$ for which $F_1$ is contained in the closed chamber $\bar C$.  In this case, the lemma gives $I \mid_w J$ such that $F_1 = F_I$,  $F_2 = w(F_J)$.

In this setting, $\delta_{F_1, F_0}$ corresponds to the inclusion of $e_I V$ into $V$.  The vector space $\sheaf{V}_{F_2}$ is identified with $\sheaf{V}_{F_J} = e_J V$ by the isomorphism $\eta(w)$.  The functor from $W$-equivariant bisheaves on $\Facet$ to $W$-equivariant bisheaves on $\Lambda$ sends the linear map $\sheaf{V}_{F_0} \xrightarrow{\gamma} \sheaf{V}_{F_2}$ to the linear map
$$(\sheaf{V}_{F_0} \xrightarrow{\gamma} \sheaf{V}_{F_J}) \circ (\sheaf{V}_{F_0} \xrightarrow{\eta(w)} \sheaf{V}_{F_0}).$$
Thus $\gamma_{F_0, F_2}$ corresponds to the linear map $e_J w \From V \To V_J$.

We find that $\sheaf{V}$ is invertible if and only if $e_J w e_I$ is an invertible linear map from $e_I V$ to $e_J V$.
\end{proof}

The invertibility condition of the previous proposition is equivalent to the invertibility of the map $e_I w^{-1} e_J w e_I \From$ $e_I V \To e_I V$.  Since $e_I$ is idempotent, we may decompose $V = e_I V \oplus (1 - e_I) V$.  Hence the invertibility condition is equivalent to the statement:
$$e_I w^{-1} e_J w e_I + (1- e_I) \text{ is invertible in } \End(V), \text{ for all } I \mid_w J.$$

\subsection{Transitive bisheaves}

The condition of transitivity is more complicated to translate into a condition on modules.  This complication is already recognized, and connected to word-lengths in $W$, by Kapranov and Schechtman in \cite[\S 0.5]{KapSch}.  Part of the challenge is that the transitivity condition is {\em non-local} -- collinear triples can be somewhat far apart, and so $W$-equivariance can only be used to move one facet of a triple $(F_1, F_2, F_3)$ into the closed Weyl chamber.  But from there, collinearity has a well-known relation to minimal galleries in the Coxeter arrangement; see, for example \cite[\S VI.6, Remark]{Brown}.  Minimal galleries then relate to reduced words in the Weyl group, e.g., by \cite[Proposition 2.12, Corollary 2.23]{Tits}.  We apply this combinatorial geometry below.

Define $\AA_W^{\trans}$ as the quotient of $\AA_W^{\mon}$ by the set of relations below.  
\begin{enumerate}
\item[(5)]
Let $S = I \sqcup J$ be a partition of $S$, and let $A$ and $B$ be subsets of $I$.  Write $w_A, w_B$ for the longest elements in the Coxeter groups $(W_A, A)$ and $(W_B, B)$, respectively.  Then if $w, w_1, w_2 \in W_I$, and $w = w_2 w_1$, and 
$$\ell(w w_B w_A) = \ell(w w_B w^{-1}) + \ell(w_2) + \ell(w_1) + \ell(w_A),$$
the relation (associated to the data $(I,J,A,B,w_1,w_2)$) is
$$e_{A \cup J} \cdot w_1 \cdot e_J \cdot w_2 \cdot e_{B \cup J} = e_{A \cup J} \cdot w \cdot e_{B \cup J}.$$
\end{enumerate}

Before proceeding, we note two consequences of (5).  First, suppose $w = w_1 = w_2 = 1$, and $A$ and $B$ are disjoint subsets of $I$.  Note that $\ell(w_A w_B) = \ell(w_A) + \ell(w_B)$ in this case (it follows from \cite[Proposition 2.4.4]{BB}, for example), so relation (5) implies
$$e_{A \cup J} e_J e_{B \cup J} = e_{A \cup J} e_{B \cup J}.$$
But the left side is $e_J$ by (1).  Note that $J = (A \cup J) \cap (B \cup J)$ since $A$ and $B$ are disjoint.  Hence (1) and (5) imply a strengthening of (1), denoted (1') below.
\begin{enumerate}
\item[(1')]
If $I \cap J = K$ then $e_I e_J = e_K$.
\end{enumerate}

Instead of taking $w$ trivial and $A,B$ arbitrary, we may consider the opposite case where $A,B = \emptyset$ and $w = w_2 w_1 \in W_I$.  In this case, relation (5) implies the following (with $J = S - I$ as before),
\begin{enumerate}
\item[($5_I$)]
If $w = w_2 w_1 \in W_I$ and $\ell(w) = \ell(w_2) + \ell(w_1)$, then $e_J \cdot w_2 \cdot e_J \cdot w_1 \cdot e_J = e_J \cdot w \cdot e_J$.
\end{enumerate}
 
An important special case of ($5_I$) is the case $I = \emptyset$, described below.  
\begin{proposition}
\label{braidprop}
For $s \in S$, write $s_0 = e_\emptyset s e_\emptyset \in \AA_W^{\trans}$.  Then $\{ s_0 : s \in S \}$ satisfies the braid relations.  Namely, if $s,t \in S$ and $s \neq t$, then
$$\underbrace{s_0 t_0 s_0 \cdots}_{m_{s,t} \text{ factors}} = \underbrace{t_0 s_0 t_0 \cdots}_{m_{s,t} \text{ factors}} .$$
\end{proposition}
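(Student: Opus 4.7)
The plan is to reduce the braid identity inside $\AA_W^{\trans}$ to the braid identity that already holds in the Coxeter group $W$ itself. Concretely, I aim to show that both $\underbrace{s_0 t_0 s_0 \cdots}_{m_{s,t} \text{ factors}}$ and $\underbrace{t_0 s_0 t_0 \cdots}_{m_{s,t} \text{ factors}}$ equal the element $e_\emptyset \cdot w \cdot e_\emptyset \in \AA_W^{\trans}$, where $w \in W_{\{s,t\}}$ is the longest element of this dihedral parabolic, i.e.\ the common value of the two length-$m_{s,t}$ reduced words $sts\cdots$ and $tst\cdots$.

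The key tool is relation $(5_I)$ specialized to $I = S$ (so that $J = \emptyset$ and $W_I = W$): if $w = w_2 w_1 \in W$ with $\ell(w) = \ell(w_2) + \ell(w_1)$, then
\[
e_\emptyset \cdot w_2 \cdot e_\emptyset \cdot w_1 \cdot e_\emptyset \;=\; e_\emptyset \cdot w \cdot e_\emptyset.
\]
Combined with the idempotency $e_\emptyset \cdot e_\emptyset = e_\emptyset$ (which follows from relation (1)), a direct induction on $k$ then yields the following claim: for any reduced expression $w = r_1 r_2 \cdots r_k$ in $W$ with each $r_i \in S$,
\[
r_{1,0} \cdot r_{2,0} \cdots r_{k,0} \;=\; e_\emptyset \cdot w \cdot e_\emptyset, \qquad \text{where } r_{i,0} := e_\emptyset \cdot r_i \cdot e_\emptyset.
\]
The inductive step splits the expression as $w = r_1 \cdot (r_2 \cdots r_k)$, which is a length-additive factorization because every subword of a reduced word is reduced; $(5_I)$ then applies, and the inductive hypothesis identifies $e_\emptyset (r_2 \cdots r_k) e_\emptyset$ with $r_{2,0} \cdots r_{k,0}$, after which the stray inner $e_\emptyset$'s collapse via $e_\emptyset^2 = e_\emptyset$.

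Applying this identity to the two reduced expressions $\underbrace{sts\cdots}_{m_{s,t}}$ and $\underbrace{tst\cdots}_{m_{s,t}}$ for the same element $w \in W_{\{s,t\}}$ collapses each side of the desired braid relation to $e_\emptyset \cdot w \cdot e_\emptyset$, completing the argument. I do not expect a substantial obstacle: the proof is essentially an easy induction unlocked by $(5_I)$. The only thing to watch is the bookkeeping of idempotents, and the verification that the length-additivity hypothesis of $(5_I)$ is preserved at each stage of the induction, both of which are immediate.
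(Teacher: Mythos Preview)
Your proposal is correct and follows exactly the paper's approach: the paper's one-line proof invokes the special case of $(5_I)$ with $J=\emptyset$ (i.e.\ $I=S$) and the braid relations already satisfied by $\{s : s \in S\}$ in $W$, and your argument simply unpacks this by the obvious induction on word length. Note that the paper labels this case ``$(5_\emptyset)$'', apparently indexing by $J$ rather than $I$ (or else a typo), but your identification $I=S$, $J=\emptyset$ is the right one.
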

\begin{proof}
This follows from the relation ($5_\emptyset$) above, since the elements $\{ s : s \in S \}$ satisfy the braid relations.
\end{proof}

Every $\AA_W^{\trans}$-module can be viewed as an $\AA_W^{\mon}$-module by pulling back, and this identifies $\Cat{Mod}^{\fd}(\AA_W^{\trans})$ as a full subcategory of $\Cat{Mod}^{\fd}(\AA_W^{\mon})$.  In parallel, write $\Cat{BiSh}_W^{\trans}(\Facet, \Omega)$ for the full subcategory of $\Cat{BiSh}_W^{\mon}(\Facet, \Omega)$ consisting of monotonic {\em and } transitive bisheaves.  (We never consider transitivity without monotonicity.)  Our main structural theorem here relates transitivity of bisheaves to relation (5) discussed above.

\begin{thm}
\label{transthm}
The equivalence of categories $\Cat{BiSh}_W^{\mon}(\Facet, \Omega) \xrightarrow{\sim} \Cat{Mod}^{\fd}(\AA_W^{\mon})$ restricts to an equivalence $\Cat{BiSh}_W^{\trans}(\Facet, \Omega) \xrightarrow{\sim} \Cat{Mod}^{\fd}(\AA_W^{\trans})$
\end{thm}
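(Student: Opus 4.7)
The proof builds on Proposition \ref{monequiv}, which gives the equivalence $\Cat{BiSh}_W^{\mon}(\Facet, \Omega) \xrightarrow{\sim} \Cat{Mod}^{\fd}(\AA_W^{\mon})$ via $\sheaf{V} \mapsto V = \sheaf{V}_S$. It remains to show that transitive bisheaves correspond precisely to those modules on which the defining relations (5) of $\AA_W^{\trans}$ hold.

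The central task is to set up a dictionary between each instance of relation (5) and the transitivity axiom for a specific collinear triple. Given a valid tuple $(I, J, A, B, w_1, w_2)$ satisfying the length-additivity hypothesis, I would exhibit a collinear triple $(F_1, F_2, F_3)$ in some star $\Star(F_0)$, with $F_2$ in general position, such that $\phi_{F_2, F_3} \circ \phi_{F_1, F_2}$, after unpacking $\phi$ via $\delta, \gamma, \eta$ and inserting the $W$-equivariance isomorphisms, yields the LHS $e_{A \cup J} w_1 e_J w_2 e_{B \cup J}$ of relation (5), while $\phi_{F_1, F_3}$ yields the RHS $e_{A \cup J} w e_{B \cup J}$. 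The triple would be built from $W$-translates of $F_{A \cup J}, F_J, F_{B \cup J}$, with positions prescribed by $w_1, w_2$. The crucial combinatorial step is that the length-additivity condition $\ell(w w_B w_A) = \ell(w w_B w^{-1}) + \ell(w_2) + \ell(w_1) + \ell(w_A)$ corresponds, via the classical dictionary between reduced expressions in $W$ and minimal galleries in the Coxeter complex (see \cite{Tits} and \cite{Brown}), to the geometric condition that the triple is collinear with $F_2$ in general position.

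Given this dictionary, the forward direction (transitive bisheaf yields $\AA_W^{\trans}$-module) is immediate: apply transitivity to the collinear triple associated to each valid tuple. For the reverse direction, Lemma \ref{transgen} reduces verification of transitivity to triples with $F_2$ in general position; by $W$-equivariance one may standardize such a triple so that its facets are $W$-translates of standard facets $F_K \subset \bar{C}$, whereupon the combinatorial data of the standardized triple gives rise to a valid tuple $(I, J, A, B, w_1, w_2)$, and relation (5) applied to this tuple yields the desired equality $\phi_{F_2, F_3} \circ \phi_{F_1, F_2} = \phi_{F_1, F_3}$.

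The main obstacle is making the dictionary rigorous and exhaustive, in particular verifying that the length-additivity hypothesis is precisely the condition for the associated triple to be collinear with $F_2$ in general position. The longest elements $w_A, w_B$ encode the ``opposite-facet'' structures relative to $F_{A \cup J}, F_{B \cup J}$ in their respective stars, while the factorization $w = w_2 w_1$ records the passage of the line segment through the general-position intermediate facet. The parameterization of collinear-in-general-position triples up to $W$-action by valid tuples is the central combinatorial claim requiring verification, and demands careful gallery-theoretic analysis of how line segments in $E$ traverse the Coxeter complex.
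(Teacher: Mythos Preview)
Your proposal is correct and follows essentially the same route as the paper's proof: reduce via Lemma~\ref{transgen} to triples with $F_2$ in general position, use $W$-equivariance to standardize, establish a bijection between such standardized triples and valid tuples $(I,J,A,B,w_1,w_2)$ via the minimal-gallery/reduced-word correspondence, and then compute both sides of the transitivity identity in terms of the module $V$.  One refinement worth noting: the paper's standardization moves not $F_1$ but the adjacent general-position facet $F_1^+$ into $\bar C$, so that $F_1^+ = F_J$; this identifies $J$ and forces the entire triple into the sub-arrangement $E_I = \bigcap_{s\in J} H_s$ with $I = S \setminus J$, where $F_1^\pm, F_2, F_3^\pm$ become chambers and the length-additivity condition is literally the minimal-gallery condition for $W_I$.
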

\begin{proof}
Let $(\sheaf{V}, \gamma, \delta, \eta)$ be a monotonic, $W$-equivariant bisheaf on $\Facet$, and $V$ the associated $\AA_W^{\mon}$-module.  We must show that $\sheaf{V}$ is transitive if and only if the $\AA_W^{\mon}$-module $V$ respects the relations of (5).

By Lemma \ref{transgen}, $\sheaf{V}$ is transitive if and only if for every collinear triple of facets $(F_1, F_2, F_3)$, with $F_2$ in general position, we have
$$\phi_{F_2, F_3} \circ \phi_{F_1, F_2} = \phi_{F_1, F_3}.$$
Parameterize a line $\lambda$ through $F_1, F_2, F_3$, so that $\lambda(1) \in F_1$, $\lambda(2) \in F_2$, and $\lambda(3) \in F_3$.  Define $F_1^\pm$ to be the facet containing $\lambda(1 \pm \epsilon)$ and $F_3^\pm$ the facet containing $\lambda(3 \pm \epsilon)$, for sufficiently small positive $\epsilon$.  See Figure~\ref{TransFig}.
\begin{figure}[htbp!]
\begin{tikzpicture}
	[dotty/.style={
         circle,
         fill=black!80,
         thick,
         inner sep=2pt,
         minimum size=0.15cm}]
\clip (-5,-3) rectangle (5,3);
\fill[black!20] (1,3) -- (0,0) -- (6,3) -- cycle;
\filldraw (0,0) circle (0.05);
\draw (-6,-3) -- (6,3);
\draw (-5, 2) -- (5,-2);
\draw (-1, -3) -- (1,3);
\draw (-1.5, 3) -- (1.5,-3);

\node[dotty] (F1m) [label=below:{$p_1^-$}] at (2.5,1) {};
\node[dotty] (F1) [label=below:{$p_1$}] at (2,1) {};
\node[dotty] (F1p) [label=above:{$p_1^+$}] at (1.5,1) {};
\node[dotty] (F2) [label=above:{$p_2$}] at (-1.75,1) {};
\node[dotty] (F3m) [label=below:{$p_3^-$}] at (-3.5,1) {};
\node[dotty] (F3) [label=below:{$p_3$}] at (-4,1) {};
\node[dotty] (F3p) [label=below:{$p_3^+$}] at (-4.5,1) {};

\draw (2,2.5) node {$F_1^+ \subset \bar C$};

\draw[dotted, <->] (-4.9, 1) -- (4,1) node[right] {$\lambda$};
\end{tikzpicture}
\caption{A 2-plane containing $\lambda$ and the origin.  Intersections of root hyperplanes and the 2-plane are drawn here; note that multiple root hyperplanes may intersect in a single line in the picture above.  Here $p_j = \lambda(j)$ for $j = 1,2,3$, and $p_1^\pm = \lambda(1 \pm \epsilon)$ and $p_3^\pm = \lambda(3 \pm \epsilon)$.  The entire 2-plane is contained in the space~$E_I$.}
\label{TransFig}
\end{figure}
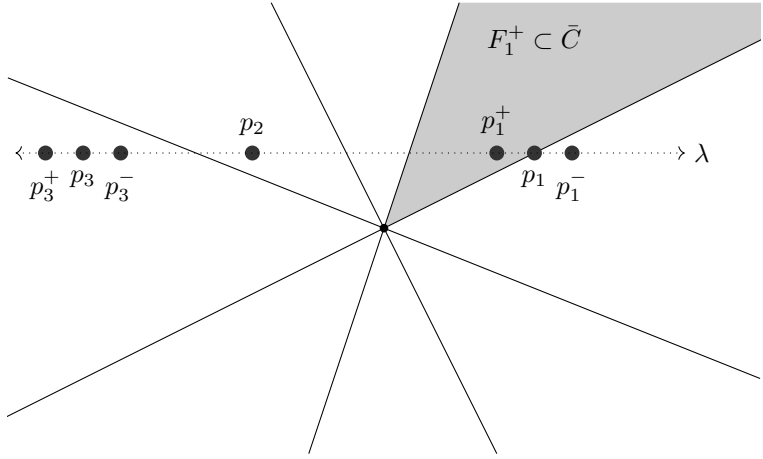

By $W$-equivariance, transitivity is equivalent to transitivity for such triples $(F_1, F_2, F_3)$ satisfying $F_1^+ \in \bar C$.  Let $J \in \Lambda$ be the subset for which $F_1^+ = F_J$.  Let $I = S - J$.  Since $F_1^+$ is in general position along the line through $F_1, F_2, F_3$, we find that
$$F_1, F_2, F_3 \subset E_I = \bigcap_{s \in J} H_s.$$
In other words, the roots in $J$ vanish identically on $F_1, F_2, F_3, F_1^\pm, F_3^\pm$.  Note that 
$$\dim(F_1^+) = \dim(E_I) = \dim(E) - \# J = \# I.$$

Now we work within the induced hyperplane arrangement $(E_I, \Hyper_I)$, where $\Hyper_I = \{ H \cap E_I : H \in \Hyper \}$.  We find that $F_1^+$ and $F_3^-$ are chambers in $E_I$, and so there is a unique element $w \in W_I$ satisfying 
$$w F_1^+ = F_3^-.$$
As $F_1 \leq F_1^+$, and $F_3 \leq F_3^-$, there exist unique subsets $A, B \subset I$ such that
$$F_1 = F_{A \cup J} \text{ and } F_3 = w \left( F_{B \cup J} \right).$$
Moreover, $F_1^-  = w_A F_1^+$ and $F_3^+ = w w_B w^{-1} F_3^-$.  Indeed, crossing the wall from $F_1^+$ to $F_1^-$ negates precisely the roots from $A$.  Looking at the ``link'' of the Coxeter complex $(E_I, \Hyper_I)$ at the facet $F_1$ yields the Coxeter complex associated to $(W_A, A)$, and thus the longest element $w_A$ of $W_A$ relates the opposite facets $F_1^+$ and $F_1^-$.  A similar analysis applies to $F_3^\pm$.

Let $w_1, w_2$ be the unique elements of $W_I$ satisfying
$$w_1 F_1^+ = F_2, \quad w_2 F_2 = F_3^-.$$
Thus $w_2 w_1 = w$.

The line $\lambda$, passing through $F_1^-, F_1^+, F_2, F_3^-, F_3^+$ (chambers in $E_I$), corresponds to a decomposition,
$$\ell( w w_B w^{-1} \cdot w_2 \cdot w_1 \cdot w_A ) = \ell(w w_B w^{-1}) + \ell(w_2) + \ell(w_1) + \ell(w_A).$$

This construction gives a bijection between two sets.
\begin{itemize}
\item
The set of collinear triples of distinct facets $(F_1, F_2, F_3)$, with $F_1^+ \subset \bar C$ and $F_2$ in general position.
\item
The set of data $(I,J,A,B, w_1, w_2)$ in which $S = I \sqcup J$, $A,B \subset I$, $w_1, w_2 \in W_I$, $w = w_2 w_1$, and
$$\ell(w w_B w_A ) = \ell(w w_B w^{-1}) + \ell(w_2) + \ell(w_1) + \ell(w_A).$$
\end{itemize}
The map in one direction has been unambiguously defined, since the triple $(F_1, F_2, F_3)$ determines $F_1^\pm$ and $F_2^\pm$, indepently of the choice of line $\lambda$.

In the other direction, given $(I,J,A,B, w_1, w_2)$, we may choose reduced words for $w w_B w^{-1}, w_2, w_1, w_A$, and concatenate them to give a reduced word for $w w_B w_A$.  Define $F_1^+ = F_J$, $F_1 = F_{A \cup J}$.  Define $F_2 = w_1 F_1^+$ and $F_3^- = w_2 F_2$.  Define $F_3 = w_2 w_1 F_{B \cup J}$.  By construction, the gallery through $F_1, F_1^+, F_2, F_3^-, F_3$ is minimal, and so we find a collinear triple of distinct facets $(F_1, F_2, F_3)$ with $F_1^+ \subset \bar C$ and $F_2$ in general position.  The reader may check that this gives a bijection.

We have now proven that $\sheaf{V}$ is transitive if and only if, for all data $(I,J,A,B, w_1, w_2)$ as above, there is an equality of maps,
\begin{equation}
\label{transphi}
\phi_{w_1 F_J, w_2 w_1 F_{B \cup J}} \circ \phi_{F_{A \cup J}, w_1 F_J} = \phi_{F_{A \cup J}, w F_{B \cup J}}.
\end{equation}

We translate this into conditions on the $\AA_W^{\mon}$-module $V = \sheaf{V}_{F_S}$ in a few steps.  First, note that we have $\sheaf{V}_{F_{A \cup J}} = e_{A \cup J} V$.  Thus the right side of \eqref{transphi}, applied to a general element $e_{A \cup J} v \in \sheaf{V}_{F_{A \cup J}}$, becomes
$$\phi_{F_{A \cup J}, w F_{B \cup J}} ( e_{A \cup J} v) = \gamma_{\circ, w F_{B \cup J}} (e_{A \cup J} v) = e_{B \cup J} w \cdot e_{A \cup J} v.$$

For the left side of \eqref{transphi}, we compute
\begin{align*}
\phi_{w_1 F_J, w_2 w_1 F_{B \cup J}} \circ \phi_{F_{A \cup J}, w_1 F_J} ( e_{A \cup J} v) 
&= \phi_{w_1 F_J, w_2 w_1 F_{B \cup J}} \left(  e_J w_1 \cdot e_{A \cup J} v \right), \\
&= \gamma_{\circ, w_2 w_1 F_{B \cup J}} \delta_{w_1 F_J, \circ}  \left(  e_J w_1 \cdot e_{A \cup J} v \right), \\
&= \gamma_{\circ, w_2 w_1 F_{B \cup J}}  \left( w_1^{-1} \cdot e_J w_1 \cdot e_{A \cup J} v \right), \\
&= e_{B \cup J} w_2 w_1 \cdot w_1^{-1} \cdot  e_J w_1 \cdot e_{A \cup J} v, \\
&= e_{B \cup J} \cdot w_2 \cdot e_J \cdot w_1 \cdot e_{A \cup J} v.
\end{align*}

Therefore, $\sheaf{V}$ is a transitive bisheaf if and only if the $\AA_W^{\mon}$-module $V$ satisfies relation (5) for all appropriate $(I,J,A,B,w_1,w_2)$.
\end{proof}

\subsection{Perverse sheaves}
Together with Kapranov and Schechtman's theorem (Theorem \ref{kapsch}), Theorem \ref{transthm} and Proposition \ref{invmod} imply the following.
\begin{thm}
\label{mainth1}
The category $\Cat{Perv}_W(E_\CC, \Hyper_\CC)$ is equivalent to the category of finite-dimensional modules over the $\CC$-algebra $\AA_W$ generated by $\{ e_I : I \in \Lambda \}$ and $\{ s : s \in S \}$, subject to the relations,
\begin{enumerate}
\item[\rm (1')]
$e_I e_J = e_{I \cap J}$ for all $I,J \in \Lambda$;
\item[\rm (2)]
If $s \in I$ then $s e_I = e_I s$;
\item[\rm (3)]
$s^2 = 1$ for all $s \in S$;
\item[\rm (4)]
$(s s')^{m_{s,s'}} = 1$ for all $s,s' \in S$ with $m_{s,s'} < \infty$;
\item[\rm (5)]
Let $S = I \sqcup J$ be a partition of $S$, and let $A$ and $B$ be subsets of $I$.  Write $w_A, w_B$ for the longest elements in the Coxeter groups $(W_A, A)$ and $(W_B, B)$, respectively.  Then if $w, w_1, w_2 \in W_I$, and $w = w_2 w_1$, and 
$$\ell(w w_B w_A) = \ell(w w_B w^{-1}) + \ell(w_2) + \ell(w_1) + \ell(w_A),$$
the relation is
$$e_{A \cup J} \cdot w_1 \cdot e_J \cdot w_2 \cdot e_{B \cup J} = e_{A \cup J} \cdot w \cdot e_{B \cup J},$$
\end{enumerate}
then localized at the multiplicative subset generated by
$$ \{ e_I w^{-1} e_J w e_I + (1- e_I) \text{ for all } I \mid_w J. \} $$
\end{thm}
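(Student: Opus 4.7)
The plan is to assemble the four preceding results into a chain of equivalences and then reinterpret the invertibility condition as a localization.

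First, I would invoke the Kapranov--Schechtman equivalence (Theorem \ref{kapsch}) in its equivariant form to identify $\Cat{Perv}_W(E_\CC, \Hyper_\CC)$ with $\Cat{Perv}_W(E, \Hyper)$, i.e., the full subcategory of $\Cat{BiSh}_W(\Facet, \Omega)$ consisting of bisheaves that are simultaneously monotonic, transitive, and invertible. Next, I would apply the restriction-to-a-chamber theorem (established in the Coxeter arrangement section) to replace $\Facet$ by the poset $\Lambda$ of finitary subsets of $S$, noting that this equivalence preserves the monotonicity condition by Proposition \ref{monores}. Since $\Lambda$ is the power set of $S$ when $W$ is finite, Proposition \ref{monequiv} then gives an equivalence between $\Cat{BiSh}_W^{\mon}(\Lambda, \Omega)$ and $\Cat{Mod}^{\fd}(\AA_W^{\mon})$.

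Second, I would incorporate transitivity by invoking Theorem \ref{transthm}, which refines the previous equivalence to $\Cat{BiSh}_W^{\trans}(\Facet, \Omega) \xrightarrow{\sim} \Cat{Mod}^{\fd}(\AA_W^{\trans})$. At this point the remaining task is to simplify the presentation of $\AA_W^{\trans}$ to match the one stated in the theorem. The two consequences of relation (5) derived in the text do exactly this: the specialization with $w = w_1 = w_2 = 1$ and $A, B$ disjoint subsets of $I$ strengthens relation (1) to relation (1'), $e_I e_J = e_{I \cap J}$. Thus the algebra with relations (1), (2), (3), (4), (5) agrees with the algebra with relations (1'), (2), (3), (4), (5), and I would record this as a brief algebraic verification.

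Third, I would translate the invertibility axiom into a localization. By Proposition \ref{invmod}, a monotonic, transitive, $W$-equivariant bisheaf $\sheaf{V}$ is invertible if and only if, for every opposition $I \mid_w J$, the map $e_J w e_I \From e_I V \To e_J V$ is an isomorphism. As noted just after that proposition, this is equivalent to requiring that the element
\[
e_I w^{-1} e_J w e_I + (1 - e_I) \in \End(V)
\]
act invertibly on $V$, since $V = e_I V \oplus (1 - e_I) V$ and $e_I w^{-1} e_J w e_I + (1 - e_I)$ is block-diagonal with identity in the second block. A finite-dimensional $\AA_W^{\trans}$-module lies in $\Cat{Mod}^{\fd}(\AA_W)$ (with $\AA_W$ the localization at the multiplicative set generated by these elements) if and only if every one of these elements acts invertibly on it, so the localized module category is precisely the image of invertible bisheaves under the equivalence.

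The main obstacle will be the bookkeeping in the final step: one must confirm that localizing at this (non-central) multiplicative set behaves correctly on finite-dimensional modules, i.e., that a module on which each listed element acts invertibly extends uniquely to an $\AA_W$-module, and conversely. This is straightforward because in finite dimensions invertibility of a linear operator is equivalent to the existence of a two-sided inverse inside $\End(V)$, so the Ore conditions never need to be checked against the module --- every relevant inverse already exists inside the endomorphism algebra, and the universal property of the localization gives a canonical action. Combining the three equivalences then yields $\Cat{Perv}_W(E_\CC, \Hyper_\CC) \xrightarrow{\sim} \Cat{Mod}^{\fd}(\AA_W)$, as claimed.
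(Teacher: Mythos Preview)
Your proposal is correct and follows essentially the same route as the paper: the paper simply states that Theorem~\ref{mainth1} follows by combining Kapranov and Schechtman's equivalence (Theorem~\ref{kapsch}), Theorem~\ref{transthm}, and Proposition~\ref{invmod}. Your write-up is in fact more detailed than the paper's one-line justification, spelling out the replacement of relation~(1) by~(1') and the passage from invertibility of $e_J w e_I$ to invertibility of $e_I w^{-1} e_J w e_I + (1-e_I)$, both of which the paper records in the text preceding the theorem rather than in a formal proof.
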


At least a few words are in order about the algebra described above.  We are localizing the finitely-generated $\CC$-algebra $\AA_W^{\trans}$ generated by $\{ e_I : I \in \Lambda, s : s \in S \}$, modulo relations (1-5) above.  Let $M$ be the multiplicative subset of $\AA_W^{\trans}$ generated by $\{ e_I w^{-1} e_J w e_I + (1- e_I) \text{ for all } I \mid_w J. \}$.  The localization which we call $\AA_W$ is the universal $\CC$-algebra, endowed with a homomorphism $\iota \From \AA_W^{\trans} \To \AA_W$, such that
\begin{itemize}
\item
If $m \in M$, then $\iota(m)$ is invertible.
\item
If $\alpha \From \AA_W^{\trans} \To R$ is another $\CC$-algebra homomorphism such that $\alpha(m)$ is invertible for all $m \in M$, then there exists a unique homomorphism $\AA_W \To R$ making the obvious diagram commute.
\end{itemize}

Such a universal $M$-inverting ring $\AA_W$ exists, for purely abstract reasons (the Cohn localization).  In general, noncommutative localization can behave badly (e.g., it could yield the zero ring).  In this article, we do not attempt to prove that this localization behaves well, e.g., we do not attempt to prove the Ore conditions.  But the localization does not behave {\em too} badly since $\AA_W$ contains some interesting nontrivial subrings.

By construction (see relations (3), (4) above), there is a canonical injective ring homomorphism 
$$\CC[W] \Into \AA_W,$$ 
sending each generator $s$ in the group ring to the corresponding element of $\AA_W$.  Injectivity follows from the following observation:  the composition
$$\CC[W] \To \AA_W \Onto \AA_W / (e_I : I \neq S)$$ 
is an isomorphism.

There is also an injective ring homomorphism 
$$\frac{\CC[e_I : I \in \Lambda]}{e_I e_J = e_{I \cap J}} \Into \AA_W.$$
The algebra on the left is commutative, generated by the idempotents $e_{S - \{s \} }$ for all $s \in S$.  In this way, every $\AA_W$-module $V$ can be viewed as a $W$-module, graded by the power set of $S$.  The $W$-module structure and gradation partially commute:  the graded piece $V_I = e_I V$ becomes a module for the parabolic subgroup $W_I$.

We have assumed that $W$ is a finite Coxeter group in this theorem.  In the affine setting, we expect a similar result, with the power set $\Lambda$ replaced by the set of ``finitary subsets'' of $S$ -- subsets of $S$ which generate a finite subgroup of $W$.  This may complicate relation (5) in the algebra $\AA_W$.

\subsection{Representations of $W$ and the braid group}
The simplest $W$-equivariant perverse sheaves on $E_\CC$ are those supported at $\{ 0 \}$.  Such a perverse sheaf corresponds to a $\CC[W]$-module $V$.  The corresponding module over $\AA_W$ is the one with the same underlying $\CC[W]$-module $V$ and on which every non-identity idempotent $e_I$ (i.e., those with $I \neq S$) acts as the zero endomorphism.

More interesting are the perverse sheaves arising from local systems on the open stratum.  Let us denote $E_\CC^\circ = \left( E_\CC - \bigcup_{H \in \Hyper} H_\CC \right)$ the open stratum in $E_\CC$.  Write $j \From E_\CC^\circ \Into E_\CC$ for the inclusion.  Choose a point $c \in C$ (the open chamber in $E$) to serve as a base point in $E_\CC^\circ$.  

If $\sheaf{V}$ is a perverse sheaf on $E_\CC$, smooth with respect to $\Hyper_\CC$, then the pullback $j^\ast \sheaf{V}$ is a local system (up to shifting by dimension) on $E_\CC^\circ$.  Write $\sheaf{L} = j^\ast \sheaf{V}[d]$ for this local system.  When $\sheaf{V}$ is $W$-equivariant, $\sheaf{L}$ is a $W$-equivariant local system.  In the other direction, if $\sheaf{L}$ is a $W$-equivariant local system on $E_\CC^\circ$, then the intermediate extension functor defines a $W$-equivariant perverse sheaf $j_{!\ast} \sheaf{L}[d]$ on $E_\CC$.

Since $W$ acts freely on $E_\CC^\circ$, the category of $W$-equivariant local systems on $E_\CC^\circ$ is equivalent to the category of local systems on the quotient $X_W = E_\CC^\circ / W$.  This, in turn, is equivalent to the category of finite-dimensional complex representations of the {\em braid group}
$$\Gamma_W = \pi_1(X_W, x),$$
where $x$ is the image of $c \in E_\CC^\circ$ in $X_W$.  The topology of $X_W$ and structure of the braid group $\Gamma_W$ is by now a classical story.  The space $X_W$ is the ``space of regular orbits'' for the (complex) reflection group $W$, and its fundamental group $\Gamma_W$ was studied by Brieskorn \cite{Bri}.  By construction, there is a surjective homomorphism $\Gamma_W \Onto W$.  Brieskorn \cite[Satz]{Bri} proves that $\Gamma_W$ is generated by elements $\{ \gamma_s : s \in S \}$, with $\gamma_s \in \Gamma_W$ lying over $s \in W$, subject to the braid relations:
$$\underbrace{\gamma_s \gamma_t \gamma_s \cdots}_{m_{s,t} \text{ terms}} = \underbrace{\gamma_t \gamma_s \gamma_t \cdots}_{m_{s,t} \text{ terms}}.$$
In this way, a $W$-equivariant local system on $E_\CC^\circ$ is given by a finite-dimensional representation $M$ of $\Gamma_W$, which is the same as a space $M$ endowed with invertible monodromy operators $\mu_s \in \mathrm{GL}(M)$ for all $s \in S$, satisfying the braid relations above.

\begin{proposition}
Let $\sheaf{V}$ be a $W$-equivariant perverse sheaf on $E_\CC$, smooth with respect to $\Hyper_\CC$.  Write $V$ for the resulting $\AA_W$-module.  Write $(j^\ast \sheaf{V})_W$ for the local system on $X_W = E_\CC^\circ / W$ obtained from the $W$-equivariant local system $j^\ast \sheaf{V}$ on $E_\CC^\circ$.  Then $(j^\ast \sheaf{V})_W$ corresponds to the vector space $M = e_\emptyset V$ with monodromy operators
$$\mu_s = e_\emptyset s e_\emptyset.$$
\end{proposition}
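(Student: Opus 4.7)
The plan is to unwind the Kapranov--Schechtman dictionary and then apply Proposition \ref{invmod}. First, since $c \in C$ lies in the open stratum $E_\CC^\circ$, the stalk of $j^\ast \sheaf{V}[d]$ at $c$ agrees with the stalk of $\sheaf{V}[d]$ there; by Theorem \ref{kapsch}, this stalk is the vector space $\sheaf{V}_C$ attached to the chamber $C = F_\emptyset$, which corresponds under Proposition \ref{monequiv} to $\sheaf{V}_{F_\emptyset} = e_\emptyset V$. Since $W$ acts freely on $E_\CC^\circ$, the covering map $\pi \From E_\CC^\circ \To X_W$ identifies this with the stalk of $(j^\ast\sheaf{V})_W$ at $x = [c]$, giving $M = e_\emptyset V$.

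For the monodromy, a standard Brieskorn generator $\gamma_s \in \Gamma_W$ lifts to a path $\tilde\gamma_s$ in $E_\CC^\circ$ from $c$ to $s(c)$ which crosses the complexified wall $H_{s,\CC}$ via a small semicircle in the imaginary direction and meets the real strata only at its two endpoints. Under Kapranov--Schechtman's equivalence, parallel transport of the local system $j^\ast\sheaf{V}[d]$ along such a path is the $\phi$-map
$$\phi_{C,s(C)} = \gamma_{F_{\{s\}}, s(C)} \circ \delta_{C, F_{\{s\}}} \From \sheaf{V}_C \To \sheaf{V}_{s(C)}$$
attached to the opposite pair $C \mid_{F_{\{s\}}} s(C)$, which is invertible by the invertibility axiom. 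Descending to $X_W$ then amounts to identifying $\sheaf{V}_{s(C)}$ with $\sheaf{V}_C$ via the inverse $\eta_{s,C}^{-1}$ of the equivariant isomorphism, so
$$\mu_s = \eta_{s,C}^{-1} \circ \phi_{C,s(C)} \in \End(\sheaf{V}_C).$$

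Finally, the opposite pair $C \mid_{F_{\{s\}}} s(C)$ corresponds, in the notation preceding Proposition \ref{invmod}, to the data $I = J = \emptyset$, $K = \{s\}$, $w = s$, i.e.\ the opposition $\emptyset \mid_s \emptyset$. Proposition \ref{invmod} identifies precisely the composition $\eta_s^{-1} \circ \gamma_{F_{\{s\}}, s(C)} \circ \delta_{C, F_{\{s\}}}$, viewed as an endomorphism of $e_\emptyset V$, with the operator $e_\emptyset \cdot s \cdot e_\emptyset$, so $\mu_s = e_\emptyset s e_\emptyset$ as claimed.

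The main obstacle is the second step: identifying the geometric monodromy along $\tilde\gamma_s$ with the algebraic $\phi$-map $\phi_{C,s(C)}$. This is a special case of Kapranov--Schechtman's main theorem, and for a path crossing a single real wall it reduces to the classical statement that the composition of the canonical map (up to a chamber) with the variation map (down to its bounding wall) realizes the monodromy around that wall in the complement of the hyperplane arrangement.
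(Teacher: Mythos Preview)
Your argument is correct and follows essentially the same route as the paper: identify the stalk at $c \in C$ with $e_\emptyset V$, use the Kapranov--Schechtman half-monodromy across the wall $F_{\{s\}}$ to transport from $\sheaf{V}_C$ to $\sheaf{V}_{sC}$, and descend to $X_W$ via the equivariant identification. The only cosmetic differences are that the paper cites \cite[\S 9.A]{KapSch} directly for the half-monodromy recipe and carries out the module translation by hand, whereas you describe the path $\tilde\gamma_s$ explicitly and invoke the computation inside the proof of Proposition~\ref{invmod}; these are the same ingredients arranged slightly differently.
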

\begin{proof}
Write $\sheaf{V}$ also for the $W$-equivariant bisheaf on $\Facet$ associated to $\sheaf{V}$.  Then, following the comment of \cite[\S0.3 (d)]{KapSch}, the stalk of $j^\ast \sheaf{V}$ at the base point $c \in C$ (an open Weyl chamber) coincides with the space $M \defeq \sheaf{V}_C = e_\emptyset V$.    

The monodromy operators $\mu_s$ are determined by the ``half-monodromy'' operators on the bisheaf $\sheaf{V}$, according to a recipe found in \cite[\S 9.A]{KapSch}.  Namely, if $s \in S$, then we have opposite facets,
$$s F_\emptyset \mid_{F_{ \{s \} } } F_\emptyset.$$
Following loc.~cit., define $E_+ = \sheaf{V}_{F_\emptyset}$, $E_- = \sheaf{V}_{s F_{\emptyset}}$, $E_0 = \sheaf{V}_{F_{ \{ s \} } }$.  Write $\gamma_{\pm} \From E_0 \To E_{\pm}$ and $\delta_{\pm} \From E_{\pm} \To E_0$ for the uppers and downers between $E_0$ and $E_\pm$.  

In terms of $\AA_W$-modules, $E_+ = E_- = e_\emptyset V$, and $E_0 = e_{ \{ s \} } V$.  The half-monodromy, in the fundamental groupoid of $E_\CC^\circ$, is the operator $\gamma_- \delta_+ \From E_+ \To E_-$.  In terms of $\AA_W$-modules, this is the operator $e_\emptyset s$ from $e_\emptyset V \To e_\emptyset V$.  It is more convenient to write this as the operator $e_\emptyset s e_\emptyset$.  

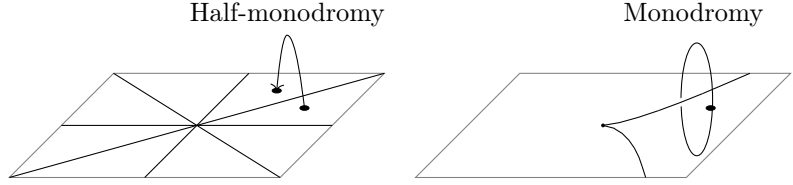
\begin{figure}
\begin{tikzpicture}[scale=0.6]
\draw[gray] (-3,0,-3) -- (3,0,-3) -- (3,0,3) -- (-3,0,3) -- cycle;
\draw (-3,0,0) -- (3,0,0);
\draw (0,0,-3) -- (0,0,3);
\draw (-3,0,-3) -- (3,0,3);
\draw (-3,0,3) -- (3,0,-3);

\coordinate (A) at (2,0,-1);
\coordinate (B) at (1,0,-2);
\filldraw (A) ellipse [x radius = 0.1, y radius = 0.05];
\filldraw (B) ellipse [x radius = 0.1, y radius = 0.05];
\draw[->] (A) parabola bend (2,2,0) (B);
\draw (2,2,0) node[above] {Half-monodromy};

\begin{scope}[xshift = 9cm]
\draw[gray] (-3,0,-3) -- (3,0,-3) -- (3,0,3) -- (-3,0,3) -- cycle;
\draw (0,0,0)
\foreach \x/\y in {0/0, 0.3/0.16, 0.6/0.46, 0.9/0.85, 1.2/1.31, 1.5/1.83, 1.8/2.41, 2.1/3}
{
-- (\x, 0, \y)
}
;
\draw (0,0,0)
\foreach \x/\y in {0/0, 0.3/0.16, 0.6/0.46, 0.9/0.85, 1.2/1.31, 1.5/1.83, 1.8/2.41, 2.1/3}
{
-- (\x, 0, -\y)
}
;
\filldraw (0,0,0) circle (0.03);
\coordinate (A) at (2,0,-1);
\coordinate (B) at (1,0,-2);

\filldraw (A) ellipse [x radius = 0.1, y radius = 0.05];
\draw (1.5,0,-1.5) ellipse [x radius = 0.35, y radius = 1.25];

\draw[line width = 3pt, white] (0.9,0,-0.85)
\foreach \x/\y in {1.2/1.31,1.5/1.83 }
{
-- (\x, 0, -\y)
}
;
\draw (0.9,0,-0.85)
\foreach \x/\y in {1.2/1.31,1.5/1.83 }
{
-- (\x, 0, -\y)
}
;

\draw (2,2,0) node[above] {Monodromy};

\end{scope}
\end{tikzpicture}
\caption{The half-monodromy operator is an element of the fundamental groupoid of $E_\CC^\circ$, while the monodromy operator is an element of the fundamental group of $X_W = E_\CC^\circ / W$.}
\end{figure}

The half-monodromy in the fundamental groupoid of $E_\CC^\circ$ gives the monodromy in the fundamental group of $X_W = E_\CC^\circ / W$.  Thus $\mu_s = e_\emptyset s e_\emptyset$ as claimed. 
\end{proof}

Note, from Theorem \ref{mainth1}, that $e_\emptyset s e_\emptyset$ is invertible as an operator on $e_\emptyset V$, and Proposition \ref{braidprop} implies that the operator $\mu_s = e_\emptyset s e_\emptyset$ satisfies the braid relations in accordance with Brieskorn's theorem.  These results together can be rephrased as the following.

\begin{proposition}
The map $\gamma_s \mapsto e_\emptyset s e_\emptyset$ gives a ring homomorphism from the braid group ring $\CC[\Gamma_W]$ to the subalgebra $e_\emptyset \AA_W e_\emptyset$.
\end{proposition}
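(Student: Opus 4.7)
The plan is to verify three things: (a) each $\mu_s := e_\emptyset s e_\emptyset$ is a unit in the ring $e_\emptyset \AA_W e_\emptyset$ (whose unit element is $e_\emptyset$, by $e_\emptyset^2 = e_\emptyset$ from relation (1')); (b) the family $\{\mu_s\}_{s \in S}$ satisfies the braid relations of $\Gamma_W$; and (c) by Brieskorn's presentation of $\Gamma_W$, these two facts force the assignment $\gamma_s \mapsto \mu_s$ to extend uniquely first to a group homomorphism $\Gamma_W \to (e_\emptyset \AA_W e_\emptyset)^\times$ and then, linearly, to the desired $\CC$-algebra homomorphism $\CC[\Gamma_W] \to e_\emptyset \AA_W e_\emptyset$. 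Step (b) is essentially Proposition \ref{braidprop}, which verifies the braid identities for $s_0 = e_\emptyset s e_\emptyset$ already in $\AA_W^{\trans}$; their images in the localization $\AA_W$ continue to satisfy them.

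The real content is step (a). Here I would exploit the opposition $\emptyset \mid_s \emptyset$ (realized geometrically by $C$ and $sC$ opposing through the wall $F_{\{s\}}$), which produces one of the generators of the multiplicative set inverted in $\AA_W$. By Theorem \ref{mainth1}, using $s^{-1} = s$, the element
$$u \;:=\; e_\emptyset s^{-1} e_\emptyset s e_\emptyset + (1 - e_\emptyset) \;=\; \mu_s^{\,2} + (1 - e_\emptyset)$$
is invertible in $\AA_W$. Let $v$ denote its two-sided inverse. Conjugating the identity $u v = v u = 1$ by $e_\emptyset$ and using the orthogonality relations $e_\emptyset(1-e_\emptyset) = (1-e_\emptyset)e_\emptyset = 0$ together with $e_\emptyset \mu_s^{\,2} e_\emptyset = \mu_s^{\,2}$, one checks that $Z := e_\emptyset v e_\emptyset$ lies in $e_\emptyset \AA_W e_\emptyset$ and satisfies $\mu_s^{\,2} Z = Z \mu_s^{\,2} = e_\emptyset$. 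Consequently $\mu_s Z$ is a right inverse and $Z \mu_s$ a left inverse of $\mu_s$ in $e_\emptyset \AA_W e_\emptyset$; by associativity they coincide, and $\mu_s$ is a unit.

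With (a) in hand, the $\mu_s$ live in the unit group $(e_\emptyset \AA_W e_\emptyset)^\times$, and (b) ensures they satisfy the defining braid relations of $\Gamma_W$. Brieskorn's presentation of $\Gamma_W$ by generators $\{\gamma_s\}$ modulo these braid relations then produces a unique group homomorphism $\Gamma_W \to (e_\emptyset \AA_W e_\emptyset)^\times$ sending $\gamma_s \mapsto \mu_s$, which the universal property of the group ring promotes to the claimed $\CC$-algebra homomorphism.

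The one expected obstacle is purely a bookkeeping issue inside step (a): the inverse $v$ of $\mu_s^{\,2} + (1 - e_\emptyset)$ is only a priori an inverse in the whole algebra $\AA_W$, and one has to argue carefully that its ``corner'' $e_\emptyset v e_\emptyset$ does invert $\mu_s^{\,2}$ inside the subring $e_\emptyset \AA_W e_\emptyset$ (with respect to its local unit $e_\emptyset$), rather than merely in $\AA_W$. This is a routine idempotent manipulation, but it is the only assertion that is not already covered verbatim by Proposition \ref{braidprop} and Brieskorn's theorem.
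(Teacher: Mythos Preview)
Your proposal is correct and follows the same approach as the paper: invoke Proposition~\ref{braidprop} for the braid relations, use the localization in Theorem~\ref{mainth1} (via the opposition $\emptyset \mid_s \emptyset$) for invertibility of $\mu_s$, and conclude by Brieskorn's presentation. The paper states the proposition as an immediate rephrasing of these facts and gives no further argument; your step (a) supplies the honest corner-idempotent computation that passes from invertibility of $\mu_s^2 + (1-e_\emptyset)$ in $\AA_W$ to invertibility of $\mu_s$ in $e_\emptyset \AA_W e_\emptyset$, which the paper leaves implicit.
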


The pullback of $W$-equivariant perverse sheaves on $(E_\CC, \Hyper_\CC)$ to local systems on the open stratum corresponds to applying $e_\emptyset$, and then pulling back from $e_\emptyset \AA_W e_\emptyset$-modules to $\CC[\Gamma_W]$-modules.  It would be interesting to study the {\em intermediate extension} functor, in the other direction.

\subsection{Examples}

\subsubsection{The rank one case}

Begin with the case $W = \{ 1, s \}$, acting on $E = \RR$ by $\sigma(x) = -x$.  Then $E_\CC = \CC$, and the unique hyperplane is $H_\CC = \{ 0 \}$.  Write $i \From \{ 0 \} \Into \CC$ and $j \From \CC^\times \Into \CC$ for the inclusions of the closed and open strata, respectively.

The algebra $\AA_W$ is then the $\CC$-algebra generated by two elements $\{ s, e_0 \}$, modulo the relations $s^2 = 1$, $e_0^2 = e_0$, and localized by inverting $e_0 s e_0 + (1 - e_0)$.  The category $\Cat{Perv}_W(E_\CC, \Hyper_\CC)$ of $W$-equivariant perverse sheaves on $\CC$, smooth with respect to the stratification $\{ 0 \} \subset \CC$, is equivalent to the category of finite-dimensional $\AA_W$-modules.  If $\sheaf{V}$ is such a sheaf, we write $V$ for the $\AA_W$-module.

We describe some sheaf-theoretic properties in terms of modules here.  For example, the pullback $j^\ast \sheaf{V}[1]$ is a local system on $\CC^\times$.  The $W$-equivariance of $\sheaf{V}$ implies the $W$-equivariance of the local system, and gives a {\em square root} of the monodromy (from the half-monodromy in the fundamental groupoid).  Explicitly, the local system $j^\ast \sheaf{V}[1]$ corresponds to the vector space $V_0 \defeq e_0 V$, with monodromy operator $(e_0 s e_0)^2$ with its evident square root $\mu = e_0 s e_0$.

A $W$-equivariant perverse sheaf $\sheaf{V}$ is supported on $\{ 0 \}$ if and only if $V = \Ker(e_0)$.  Such sheaves can be constructed from any $\CC[s] / (s^2)$-module $V$ by having $e_0$ act by zero.  

More interesting is the functor of intermediate extension.  Consider a {\em rank one} $W$-equivariant local system $\sheaf{L}$ on $\CC^\times$, with monodromy $\mu^2$ ($W$-equivariance provides a square root of monodromy, which we call $\mu$).  The intermediate extension $j_{!\ast} \sheaf{L}$ is a $W$-equivariant perverse sheaf on $\CC$, extending $\sheaf{L}$, and having no subobject or quotient object supported on $\{ 0 \}$.

If $\mu \neq \pm 1$, then $j_{! \ast} \sheaf{L}$ corresponds to an $\AA_W$-module $V$.  Choosing a suitable basis, i.e., choosing bases of stalks in the two Weyl chambers, we may identify $V = \CC^2$ with module structure given by
$$e_0 = \Matrix{1}{0}{0}{0} \text{ and } s = \Matrix{\mu}{1+\mu}{1 - \mu}{-\mu}.$$

If $\mu = \pm 1$, the intermediate extension corresponds to a smaller $\AA_W$-module $V$.  In this case, we have $V \isom \CC$ with module structure given by
$$e_0 = \Id \text{ and } s = \mu.$$

\subsubsection{Type $A_2$}

Now consider the case $W = \{ 1, s, t, st, ts, sts \}$ acting on $E = \{ (x,y,z) \in \RR^3 : x + y + z = 0 \}$ by the usual reflection representation.  We set $s(x,y,z) = (x,z,y)$ and $t(x,y,z) = (z,y,x)$.  Then $E_\CC \isom \CC^2$, and there are three lines of reflection $H_x, H_y, H_z$ corresponding to the conditions $x = 0$, $y = 0$, $z = 0$.  Note that $s$ fixes $H_x$ and $t$ fixes $H_y$.

In this case, the algebra $\AA_W$ can be described as $\CC \langle s, t, e_x, e_y \rangle$ modulo the following relations (and then localized).
\begin{enumerate}
\item
$e_x^2 = e_x$, $e_y^2 = e_y$, and $e_x e_y = e_y e_x$.  Write $e_0 = e_x e_y$.
\item
$s e_x = e_x s$ and $t e_y = e_y t$.
\item
$s^2 = t^2 = 1$.
\item
$sts = tst$.
\item
Write $s_0 = e_0 s e_0$ and $t_0 = e_0 t e_0$.  Then
$s_0 t_0 s_0 = e_0 sts e_0 =  e_0 tst e_0 = t_0 s_0 t_0$.
\end{enumerate}
These relations fully capture the relations (1-5) of $\AA_W^{\trans}$, which simplify considerably in low-rank cases.

\begin{remark}
The category of modules over this algebra is equivalent to the category of $S_3$-equivariant perverse sheaves on $E_\CC \isom \CC^2$, smooth with respect to the stratification by coordinate hyperplanes.  The quotient scheme $\CC^2 / S_3$ can be identified with $\CC^2$, with stratification by a nodal cubic $(y^2 = x^3)$ and $\{ 0 \}$, by classical invariant theory.  The category of $\AA_W$-modules is equivalent to the category of perverse sheaves on the quotient {\em stack}, $[\CC^2 / S_3]$, which captures the left-over equivariance on the 0- and 1-dimensional strata.
\end{remark}

\section{Perverse sheaves on buildings}

If $\Hyper$ is a hyperplane arrangement in a real affine space $E$, with resulting facet decomposition $\Facet$, recall that $\Cat{Perv}(E, \Hyper)$ denotes the category of monotonic, transitive, invertible bisheaves on $\Facet$.  This is the category which Kapranov and Schechtman prove equivalent to the category $\Cat{Perv}(E_\CC, \Hyper_\CC)$ of perverse sheaves on the complex space $E_\CC$, smooth with respect to the hyperplane stratification given by $\Hyper_\CC$.

As affine buildings are built from real hyperplane arrangements (apartments), we extend the category $\Cat{Perv}(E, \Hyper)$ to give a category of ``perverse sheaves on a building''.  Buildings themselves do not seem to have a natural complexification, glued from complex hyperplane arrangements.  But {\em categories} of perverse sheaves on each apartment can be glued nonetheless.  

\subsection{Perverse sheaves on hyperplane complexes}

Before specializing to buildings, we study perverse sheaves on more general spaces glued from hyperplane arrangements.  We define a \defined{hyperplane complex} to be a structure $(\Build, \{ E_i, \Hyper_i \}_{i \in I})$, where $\Build$ is a topological space endowed with a topological decomposition $\Build = \bigcup_{i \in I} E_i$ into closed subspaces $E_i$ called \defined{apartments}, and each $E_i$ is endowed with the structure of a real affine space together with a locally finite set of affine hyperplanes $\Hyper_i$.  Write $\Facet_i$ for the set of facets for the hyperplane arrangement $\Hyper_i$ in $E_i$.  For this triple to be a hyperplane complex, we require the following axioms. 
\begin{enumerate}
\item
For all $F_i \in \Facet_i$ and $F_j \in \Facet_j$, $F_i \cap F_j$ is empty or $F_i = F_j$ (they are equal as subspaces of $\Build$).  In this way, it makes sense to write $\Facet = \bigcup_{i \in I} \Facet_i$ for the poset of facets of $\Build$.  We write $F_1 \leq F_2$ in $\Build$ if there exists an apartment $E$ such that $F_1 \leq F_2$ in $E$.
\item
For any two facets $F_1, F_2 \in \Facet$, there exists an apartment $E$ such that $F_1, F_2 \subset E$.
\item
Let $E_i$ and $E_j$ be two apartments in $\Build$, and let $F_1, F_2$ be two facets in $\Build$.  If $F_1, F_2 \subset E_i$ and $F_1, F_2 \subset E_j$, then there exists an isomorphism of real affine spaces $\phi \From E_i \To E_j$ such that $\phi(\Hyper_i) = \Hyper_j$ and $\phi$ fixes $F_1$ and $F_2$ pointwise.
\end{enumerate}



Let $\Build$ be a hyperplane complex, and $p_1, p_2 \in \Build$ two points.  Then $p_1, p_2$ belong to unique facets $F_1, F_2$ of $\Build$, and by (2) there exists an apartment $E$ containing both $p_1$ and $p_2$.  Write $\overline{p_1 p_2}$ for the geodesic (line segment) in the affine space $E$ joining $p_1$ and $p_2$.  Axiom (3) implies that the geodesic $\overline{p_1 p_2}$ depends only on $p_1$ and $p_2$, and not on the choice of apartment.

Every hyperplane complex $\Build$ yields a poset $\Facet$ of facets.  Thus we may consider the abelian category of (monotonic) bisheaves on $\Build$, with coefficients in finite-dimensional complex vector spaces as usual.  A monotonic bisheaf $\sheaf{V}$ on $\Build$ is called \defined{transitive} if its restriction to every apartment is transitive.  Similarly, it is called \defined{invertible} if its restriction to every apartment is invertible.  In this way, we define the category $\Cat{Perv}(\Build)$ of \defined{perverse sheaves} on the hyperplane complex $\Build$, as the category of monotonic, transitive, and invertible bisheaves on $\Build$.

\subsection{Bisheaves from depth zero representations}

In \cite{SS}, Schneider and Stuhler begin with a smooth representation $\pi$ of a $p$-adic group $G$, and construct an equivariant sheaf and an equivariant cosheaf on the building.  An irreducible smooth representation $\pi$ can be recovered in the cohomology or homology thereof.  Using their sheaves, Schneider and Stuhler give a beautiful recipe to understand the character of an irreducible representation $\pi$ (on elliptic elements) and shed light on a mysterious duality of Bernstein, Zelevinski, and Aubert.

Here we demonstrate that Schneider and Stuhler's construction gives not just a sheaf and cosheaf on the building, but  when $\pi$ has {\em depth zero}, their construction gives a perverse sheaf on the building.  

We fix a bit of notation for what follows.  Let $k$ be a nonarchimedean local field.  Let $\OO$ be the ring of integers in $k$, $\mm$ the maximal ideal in $\OO$, and $\FF_q$ the residue field $\OO / \mm$ (of cardinality $q$).  Let $\alg{G}$ be a connected reductive group over $k$, and $G = \alg{G}(k)$.  Let $\Build$ be the enlarged (accounting for the center) Bruhat--Tits building of $G$.  If $\alg{S}$ is a maximal $k$-split torus in $\alg{G}$, we write $\Apart(\alg{S})$ for the associated apartment, $\Phi(\alg{S})$ for the set of roots of $\alg{G}$ with respect to $\alg{S}$ (over $k$), and $\Psi(\alg{S})$ for the set of affine roots.  

Each apartment $\Apart$ in the building $\Build$ is (after a base point is chosen) a real vector space, endowed with an affine hyperplane arrangement $\Hyper$ from the vanishing loci of affine roots.  Let $(\Facet, \leq)$ be the poset of facets in the building $\Build$.  The building is an example of a hyperplane complex, in the terminology of the previous section.

For every facet $F \in \Facet$, write $G_F$ for the corresponding parahoric subgroup, following the convention that $G_F = \sch{G}_F(\OO)$ for the Bruhat--Tits group scheme $\sch{G}_F$ over $\OO$ with {\em connected} special fibre $\alg{\bar G}_F$.  Write $G_F^+$ for the pro-unipotent radical of $G_F$.  If $F \leq F'$ then 
$$G_F^+ \subset G_{F'}^+ \subset G_{F'} \subset G_{F}.$$

Now let $(\pi, V)$ be a smooth representation of $G = \alg{G}(k)$.  For every $F \in \Facet$, define $\sheaf{V}_F = V^{G_F^+}$, the space of $G_F^+$-fixed vectors.  For $F_1 \leq F_2$, inclusion gives a map,
$$\delta_{F_2,F_1} \From \sheaf{V}_{F_2} = V^{G_{F_2}^+} \Into V^{G_{F_1}^+} = \sheaf{V}_{F_1}.$$
On the other hand, projection gives a map,
$$\gamma_{F_1,F_2} \From \sheaf{V}_{F_1} = V^{G_{F_1}^+} \Onto V^{G_{F_2}^+} = \sheaf{V}_{F_2}.$$
Explicitly,
$$\gamma_{F_1,F_2}(v) = \frac{1}{\Vol(G_{F_2}^+)} \int_{G_{F_2}^+} \pi(g) v ~ dg.$$
The axioms for a bisheaf are easily checked in this case.  In fact, $(\sheaf{V}, \gamma, \delta)$ is a monotonic bisheaf.  For if $K_1 \subset K_2$ are compact open subgroups of $G$, then inclusion followed by projection -- from $K_2$-fixed vectors to $K_1$-fixed vectors to $K_2$-fixed vectors -- is the identity map.

This can be extended to an exact functor,
$$\Cat{Rep}^{\sm}(G) \To \Cat{BiSh}^{\mon}(\Build).$$
Here $\Cat{Rep}^{\sm}(G)$ denotes the category of smooth representations of $G$ and $G$-intertwining maps, and $\Cat{BiSh}^{\mon}(\Facet)$ denotes the category of monotonic bisheaves of complex vector spaces.  The functor is exact, as the operation of taking fixed vectors under a profinite group is exact.

After Moy and Prasad, \cite{MP1}, an {\em irreducible} smooth representation $(\pi, V)$ has \defined{depth zero} if $V^{G_F^+} \neq 0$ for some $F \in \Facet$; otherwise, we say it has positive depth.  More generally, any smooth representation $(\pi, V)$ is said to have depth zero if all irreducible subquotients of $(\pi, V)$ have depth zero.  The abelian category $\Cat{Rep}^{\sm}(G)$ splits as a direct product $\Cat{Rep}^{\sm}(G) = \Cat{Rep}^{\sm}_0(G) \times \Cat{Rep}^{\sm}_{>0}(G)$, where the first factor denotes the full subcategory of depth zero representations.  An object of the second factor $\Cat{Rep}^{\sm}_{>0}(G)$ is a smooth representation, all of whose irreducible subquotients have positive depth.  See \cite[\S 1.2(c), 6.2]{BKY} for more details on splitting the category.  The {\em depth zero projector} sends every smooth representation to its maximal summand of depth zero.    

The functor $\Cat{Rep}^{\sm}(G) \To \Cat{BiSh}^{\mon}(\Build)$ sends irreducible depth zero representations to nonzero bisheaves.  If an irreducible representation $(\pi, V)$ has positive depth, then it has no $G_F^+$-fixed vectors at any facet $F$, and hence the functor sends $(\pi, V)$ to zero.  Hence the functor $\Cat{Rep}^{\sm}(G) \To \Cat{BiSh}^{\mon}(\Build)$ factors through the depth zero projector (cf.~the recent work of Barbasch, Ciubotaru, and Moy \cite{BCM}).

The bisheaf $(\sheaf{V}, \gamma, \delta)$ associated to a smooth representation $(\pi, V)$ has a natural $G$-equivariant structure.  Indeed, if $g \in G$, then $\pi(g)$ induces an isomorphism,
$$\eta_{g,F} := \pi(g) \From \sheaf{V}_F = V^{G_F^+} \To V^{g G_F^+ g^{-1}} = V^{G_{g(F)}^+} = \sheaf{V}_{g(F)}.$$
The axioms for a $G$-equivariant structure are straightforward to check, and we find that $(\sheaf{V}, \gamma, \delta, \eta)$ is a $G$-equivariant monotonic bisheaf on $\Facet$.  This gives a functor,
$$\Cat{Rep}^{\sm}(G) \To \Cat{BiSh}_G^{\mon}(\Build).$$

This entire construction is the same as the construction of Schneider and Stuhler \cite{SS}, in a slightly different language.  The primary difference is that we consider the sheaf and cosheaf structure simultaneously, and we verify further axioms on the bisheaf below.

\subsection{Invertibility}

The proof of invertibility is a bit easier than the proof of transitivity, so this is where we begin.  On the other hand, the proof given here of invertibiliy relies on deeper results in the representation theory of finite groups of Lie type.  In particular, it is not clear how the condition of invertibility, and thus the construction of perverse sheaves, might be adapted beyond depth zero.

\begin{proposition}
If $(\pi, V)$ is a smooth representation of $G$, then the associated bisheaf $(\sheaf{V}, \gamma, \delta)$ on $\Build$ is invertible.
\end{proposition}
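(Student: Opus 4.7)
The plan is to reduce invertibility to a classical statement about finite reductive groups via Moy--Prasad theory.  Fix an opposite pair $F_1 \mid_{F_0} F_2$ and, by axiom (2) of a hyperplane complex, an apartment $\Apart$ containing both.  Since $F_0 \leq F_1, F_2$, we have $G_{F_0}^+ \subseteq G_{F_i}^+ \subseteq G_{F_0}$, so $W := V^{G_{F_0}^+}$ is a smooth module for the finite reductive group $\bar G := G_{F_0}/G_{F_0}^+$ and $V^{G_{F_i}^+} = W^{\bar U_i}$, where $\bar U_i$ denotes the image of $G_{F_i}^+$ in $\bar G$.  Unwinding the definitions (using that the integral defining $\gamma_{F_0, F_2}$ collapses to a finite average once one has restricted to $G_{F_0}^+$-invariants), the composition $\gamma_{F_0, F_2} \circ \delta_{F_1, F_0}$ becomes the $\bar U_2$-averaging map
$$\mu \From W^{\bar U_1} \To W^{\bar U_2}, \quad v \mapsto \frac{1}{\abs{\bar U_2}} \sum_{u \in \bar U_2} u \cdot v.$$

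Next I would appeal to Moy--Prasad theory to describe $\bar U_1, \bar U_2$ and the common quotient $\bar L := G_{F_i}/G_{F_i}^+$ explicitly inside $\bar G$.  Because $F_1$ and $F_2$ span the same flat and lie on opposite sides of a wall of $\Apart$ passing through $F_0$, the standard parahoric dictionary identifies $\bar U_1, \bar U_2$ with the unipotent radicals of two opposite parabolic subgroups $\bar P_1, \bar P_2 \subset \bar G$ sharing a common Levi factor $\bar L$.  Thus $\mu$ is the $\bar L$-equivariant averaging map between the Jacquet modules of $W$ for the opposite parabolics $\bar P_1, \bar P_2$.

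The conclusion then rests on a classical fact for complex representations of finite reductive groups:  Harish--Chandra restriction depends only on the Levi and not on the parabolic, and for opposite parabolics the canonical natural isomorphism between the two Jacquet modules is induced precisely by unipotent averaging.  This is proved using the Bruhat decomposition and the fact that only the dense big cell $\bar U_1 \bar L \bar U_2$ contributes nontrivially to the composition.  The main obstacle I anticipate is the Moy--Prasad identification in the intermediate step:  matching the filtration groups $G_{F_i}^+$ with unipotent radicals of opposite parabolics in $\bar G_{F_0}$ requires some delicate Bruhat--Tits bookkeeping, particularly for non-split groups.  Once that dictionary is in hand, invertibility of $\mu$ is standard finite-group representation theory.
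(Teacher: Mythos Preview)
Your proposal is correct and follows essentially the same route as the paper: reduce to the finite reductive quotient $\bar G_0 = G_{F_0}/G_{F_0}^+$, identify $\gamma_{F_0,F_2}\circ\delta_{F_1,F_0}$ with the $\bar N_2$-averaging map between unipotent invariants for two parabolics sharing a Levi, and invoke the classical isomorphism of Jacquet modules.  The paper phrases the parabolics as ``associate'' rather than ``opposite'' (in this codimension-one situation they are in fact opposite, as you say) and, for the final step you sketch via the Bruhat decomposition, simply cites Moy--Prasad \cite[Proposition~6.1(2)]{MP2} and Howlett--Lehrer \cite{HowLeh}.
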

\begin{proof}
The proof is the same as \cite[Proposition 6.1 (2)]{MP2}, and we review the structural details following their treatment.  Let $F_1, F_2$ be facets of the same dimension $r$, lying in a common $r$-dimensional flat in some apartment, on opposite sides of an $(r-1)$-dimensional face $F_0$.  

Let $\alg{\bar G}_0$ be the reductive group over $\FF_q$ which occurs as the reductive quotient of the special fibre for the parahoric at $F_0$.  Then there exist {\em associate} parabolic subgroups $\alg{\bar P}_1 = \alg{\bar M} \alg{\bar N}_1, \alg{\bar P}_2 = \alg{\bar M} \alg{\bar N}_2$ in the reductive group $\alg{\bar G}_0$, with identifications
$$G_{F_1} / G_{F_0}^+ = \alg{\bar P}_1(\FF_q), \quad G_{F_2} / G_{F_0}^+ = \alg{\bar P}_2(\FF_q), \text{ and } G_{F_1} / G_{F_1}^+ = G_{F_2} / G_{F_2}^+ = \alg{\bar M}(\FF_q).$$

Define $V_i = \sheaf{V}_{F_i} = V^{G_{F_i}^+}$, for $i = 0,1,2$.  Then $V_0$ is a representation of $G_{F_0}$, and $V_1, V_2$ are the fixed vectors in $V_0$ for the actions of $G_{F_1}^+, G_{F_2}^+$ respectively.  These representations are trivial on $G_{F_0}^+$ however, so we may instead say that $V_0$ is a representation of $\alg{\bar G}_0(\FF_q)$, and $V_1, V_2$ are the fixed vectors for $\alg{\bar N}_1(\FF_q) = G_{F_1}^+ / G_{F_0}^+$ and $\alg{\bar N}_2(\FF_q) = G_{F_2}^+ / G_{F_0}^+$, respectively.

The map $\gamma_{F_0, F_2} \delta_{F_1, F_0} \From V_1 \To V_2$ can now be identified with the map
$$v \mapsto \frac{1}{\# \alg{\bar N}_2(\FF_q) } \cdot \sum_{n \in \alg{\bar N}_2(\FF_q)} \pi(n) \cdot v.$$
By \cite[Proposition 6.1 (2)]{MP2}, following Howlett and Lehrer \cite{HowLeh}, this map is an isomorphism from $V_1$ to $V_2$.
\end{proof}

\subsection{The transitive property}

The following proposition was proven in collaboration with Jessica Fintzen.
\begin{proposition}
If $(\pi, V)$ is a smooth representation of $G$, then the associated monotonic bisheaf $(\sheaf{V}, \gamma, \delta)$ on $\Build$ is transitive.
\end{proposition}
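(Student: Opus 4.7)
The plan is to fix a collinear triple $(F_1, F_2, F_3)$ inside the star $\Star(F_0)$ of some facet $F_0$ in an apartment $\Apart$, and verify the identity $\phi_{F_2,F_3} \circ \phi_{F_1,F_2} = \phi_{F_1,F_3}$ by producing a bijective set-theoretic factorization inside $G_{F_2}^+$.  Since $F_0 \leq F_i$ forces $G_{F_0}^+ \subset G_{F_i}^+$, each subspace $\sheaf{V}_{F_i}$ lies inside $\sheaf{V}_{F_0} = V^{G_{F_0}^+}$, the downers $\delta_{F_i,F_0}$ are inclusions, and the uppers $\gamma_{F_0,F_i}$ act as finite averages $e_{F_i}$ over the quotients $G_{F_i}^+/G_{F_0}^+$, which are the $\FF_q$-points of the unipotent radicals $\alg{\bar N}_{F_i}$ of parabolic subgroups $\alg{\bar P}_{F_i}$ of the reductive quotient $\alg{\bar G}_0 = G_{F_0}/G_{F_0}^+$.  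The transitivity identity thereby reduces to
\[
e_{F_3}(e_{F_2}(v)) = e_{F_3}(v), \qquad v \in V^{G_{F_1}^+}.
\]

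The key input is an affine-root observation.  For a facet $F \subset \Apart$, write $\Psi_F^+ = \{\psi \in \Psi : \psi(F) > 0\}$; by Moy--Prasad, $G_F^+$ is generated by $T(k)_{0^+}$ together with the affine root subgroups $U_\psi$ for $\psi \in \Psi_F^+$.  Collinearity provides points $p_i \in F_i$ with $p_2 \in \overline{p_1 p_3}$, and the affine-linear function $\psi$ satisfies $\psi(p_2) = t\psi(p_1)+(1-t)\psi(p_3)$ for some $t \in [0,1]$, so $\psi(p_2) > 0$ forces $\psi(p_1) > 0$ or $\psi(p_3) > 0$; this yields
\[
\Psi_{F_2}^+ \subset \Psi_{F_1}^+ \cup \Psi_{F_3}^+,
\]
which, passing to the reductive quotient at $F_0$, becomes the inclusion $\Phi_{F_2} \subset \Phi_{F_1} \cup \Phi_{F_3}$ of the sets of roots of $\alg{\bar G}_0$ appearing in the respective $\alg{\bar N}_{F_i}$.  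I would then partition $\Phi_{F_2} = \Phi^A \sqcup \Phi^B$ with $\Phi^A = \Phi_{F_2} \cap \Phi_{F_3}$ (closed under root sums, hence yielding a subgroup) and $\Phi^B = \Phi_{F_2} \setminus \Phi_{F_3} \subset \Phi_{F_1}$, and invoke the ordered-product decomposition of the unipotent radical of a parabolic to obtain a set bijection
\[
\alg{\bar N}_{F_2}(\FF_q) = N^A \cdot N^B, \qquad N^A \subset \alg{\bar N}_{F_3}(\FF_q),\ N^B \subset \alg{\bar N}_{F_1}(\FF_q).
\]

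The final computation is then formal.  Given $v \in V^{G_{F_1}^+}$, the fact that $N^B$ fixes $v$ collapses $e_{F_2}(v)$ to the uniform average of $v$ over $N^A$; since $N^A \subset \alg{\bar N}_{F_3}(\FF_q)$, the substitution $m' = ma$ exhibits $(m,a) \mapsto m'$ as a uniform $|N^A|$-to-one cover of $\alg{\bar N}_{F_3}(\FF_q)$, which reshuffles the iterated average to give $e_{F_3}(e_{F_2}(v)) = e_{F_3}(v)$, as required.  The main obstacle I anticipate is precisely the factorization $\alg{\bar N}_{F_2}(\FF_q) = N^A \cdot N^B$ of the previous paragraph: while the closedness of $\Phi^A$ makes $N^A$ a subgroup, $\Phi^B$ need not be closed, so the bijection must be extracted from a single ordered-product decomposition of $\alg{\bar N}_{F_2}$ in which the roots of $\Phi^A$ are listed before those of $\Phi^B$, with the Bruhat--Tits / Moy--Prasad bookkeeping ensuring uniqueness and compatibility with Haar measure.
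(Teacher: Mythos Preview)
Your proposal is correct and follows essentially the same route as the paper: reduce to the reductive quotient $\alg{\bar G}_0$ at $F_0$, identify $\phi_{i,j}$ with averaging over $\bar N_j(\FF_q)$, and exploit the affine-root inclusion $\Psi_{F_2}^+ \subset \Psi_{F_1}^+ \cup \Psi_{F_3}^+$ that comes from collinearity and the affine-linearity of $\psi$. The paper states exactly this inclusion as the key step, simply asserting that ``it suffices'' without writing out the resulting identity of averaging operators; your factorization $\bar N_{F_2}(\FF_q) = N^A \cdot N^B$ and the subsequent absorption argument are precisely what make that ``it suffices'' explicit. Your anticipated obstacle is not a real problem: the ordered-product decomposition of a unipotent radical is bijective for \emph{any} ordering of its roots, so placing $\Phi^A$ before $\Phi^B$ gives the required set bijection, and closedness of $\Phi^A$ (as the intersection of two parabolic root sets) ensures $N^A$ is a genuine subgroup of $\bar N_{F_3}(\FF_q)$, which is all the final reshuffle needs.
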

\begin{proof}
In order to prove that $\sheaf{V}$ is transitive, we must consider a collinear triple $(F_1, F_2, F_3)$ of facets, all contained in a star neighborhood $\Star(F_0)$ in the building $\Build$.  Thus $F_1, F_2, F_3$ correspond to a triple of parabolic subgroups 
$$\alg{\bar P_1} = \alg{\bar M}_1 \alg{\bar N}_1, \quad \alg{\bar P_2} = \alg{\bar M}_2 \alg{\bar N}_2, \quad \alg{\bar P_3} = \alg{\bar M}_3 \alg{\bar N}_3 \subset \alg{\bar G}_0.$$
In what follows, we write $\bar N_j = \alg{\bar N}_j(\FF_q)$.  

Write $V_0 = \sheaf{V}_{F_0} = V^{G_{F_0}^+}$, and similarly for $V_1, V_2, V_3$.   As in the previous section, $V_0$ is naturally a representation of $\bar G_0 = \alg{\bar G}_0(\FF_q)$.  All computations will take place within subspaces of $V_0$, since indeed,
$$V_j = \sheaf{V}_{F_j} = V_0^{\bar N_j^+} \text{ for } j = 1,2,3.$$

For $1 \leq i < j \leq 3$, the map $\phi_{i,j} \defeq \phi_{F_i, F_j}$ is simply averaging over $\bar N_j$.  
$$\phi_{i,j}(v) = \frac{1}{\# \bar N_j } \cdot \sum_{n \in \bar N_j } \pi(n) \cdot v \text{ for all } v \in V_i.$$

Choose an apartment containing $F_0, F_1, F_2, F_3$.  This is possible, since any apartment containing $F_1$ and $F_3$ will contain their closure, hence $F_0$, and any apartment containing $F_1$ and $F_3$ will also contain facets along geodesics joining points of $F_1$ to $F_3$, and hence will contain $F_2$ along the way.

The choice of apartment determines a maximal $k$-split torus $\alg{S} \subset \alg{G}$, and a corresponding torus $\alg{\bar S} \subset \alg{\bar G}_j$, so that the parabolics $\alg{\bar P}_j$ are standard.

For any affine root $\psi$, let $U_\psi$ be the affine root subgroup as defined in \cite[\S 2.4]{MP1}.  In particular, note that $U_\psi \supset U_{2 \psi}$ when the gradient of $\psi$ is multipliable.  Following \cite[\S 2.6]{MP1}, each group $G_{F_i}^+$ is generated by $\{U_\psi : \psi(F_i) > 0 \}$ together with a subset of $Z(S)$, the centralizer of $\alg{S}(k)$.  Write $\bar U_\psi$ for the image of $U_\psi$ modulo $G_{F_0}^+$.

To prove transitivity, $\phi_{1,3} = \phi_{2,3} \circ \phi_{1,2}$, it suffices to prove the following:
\begin{center}
If $\bar U_\psi \subset \bar N_2$ then $\bar U_\psi \subset \bar N_1 \text{ or } \bar U_\psi \subset \bar N_3$.
\end{center}
From the structure theory above, this is equivalent to proving
\begin{center}
If $\psi \vert_{F_2} > 0$ then $\psi \vert_{F_1} > 0$ or $\psi \vert_{F_3} > 0$.
\end{center}
This is clear, since $(F_1, F_2, F_3)$ is a collinear triple and $\psi$ is an affine-linear function.
\end{proof}

\subsection{Perversity and conclusion}

As we have proven monotonicity, invertibility, and transitivity, we have proven the following result.

\begin{thm}
\label{depthzeroPS}
If $(\pi, V)$ is an admissible representation of $G$, then $(\sheaf{V}, \gamma, \delta, \eta)$ is a $G$-equivariant perverse sheaf on the Bruhat--Tits building $\Build$.
\end{thm}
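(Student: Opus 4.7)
The plan is to assemble the three axioms defining an object of $\Cat{Perv}(\Build)$ together with the $G$-equivariant structure, all of which have been established (or are immediate) in the preceding subsections. First, the bisheaf $(\sheaf{V}, \gamma, \delta)$ is monotonic: this was observed directly from the construction, since for compact open subgroups $K_1 \subset K_2$ of $G$, the composition $V^{K_2} \Into V^{K_1} \Onto V^{K_2}$ (inclusion followed by averaging) is the identity on $V^{K_2}$. Second, the $G$-equivariant structure $\eta_{g,F} = \pi(g) \From V^{G_F^+} \To V^{G_{gF}^+}$ was constructed and its axioms are routine to verify. Third, invertibility of the restriction to each apartment is precisely the content of the first of the two preceding propositions (an application of Howlett--Lehrer). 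Fourth, transitivity of the restriction to each apartment is precisely the preceding proposition (an affine-root computation).

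The one new ingredient, which is where the admissibility hypothesis enters, is the finite-dimensionality of the stalks. The category $\Cat{Perv}(\Build)$ was defined using bisheaves valued in finite-dimensional complex vector spaces, while the propositions establishing monotonicity, invertibility, and transitivity were stated for arbitrary smooth representations. To land in $\Cat{Perv}(\Build)$, I would therefore observe that since $G_F^+$ is a compact open subgroup of $G$ for every facet $F$, admissibility of $(\pi, V)$ gives $\dim_\CC V^{G_F^+} < \infty$; hence $\sheaf{V}_F \in \Omega$ for the required coefficient category.

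There is no real obstacle: this theorem is a summary of the preceding work, and the proof amounts to citing the three propositions together with the admissibility-implies-finite-dimensional-stalks remark. If any subtlety remains, it is only a notational one of confirming that the axiom ``restriction to every apartment is transitive / invertible'' is interpreted correctly as a hypothesis per-apartment, which matches exactly what the preceding propositions established (they fix an apartment containing the relevant facets $F_0, F_1, F_2, F_3$ before carrying out their arguments).
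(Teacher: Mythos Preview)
Your proposal is correct and matches the paper's own proof essentially verbatim: the paper simply states that monotonicity, invertibility, and transitivity have been established in the preceding subsections, and then remarks that the admissibility hypothesis is imposed precisely to ensure that each $\sheaf{V}_F = V^{G_F^+}$ is finite-dimensional. Your identification of the admissibility-implies-finite-stalks point as the only new ingredient here is exactly right.
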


Note that we placed the hypothesis of admissibility, to guarantee that the spaces $\sheaf{V}_F$ are finite-dimensional.  This gives an exact functor, factoring through the depth zero projector, from the category of admissible representations of $G$ to the category of $G$-equivariant perverse sheaves on $\Build$.  Moreover, a depth zero admissible representation $(\pi, V)$ is naturally identified (as a smooth representation of $G$) with the cosheaf homology $H_0(\Build, \sheaf{V})$ according to \cite[Theorem II.3.1]{SS} (with a minor restriction removed by \cite[Theorem 2.4, Lemma 2.6]{MeySol}).  In particular, $(\pi, V)$ can be recovered from the $G$-equivariant perverse sheaf.

The restriction of such a perverse sheaf to an apartment becomes a perverse sheaf on an affine hyperplane arrangement, equivariant for an affine Weyl group.  We have given an explicit description of the category of such perverse sheaves, at least for finite $W$, in terms of modules over an algebra $\AA_W$.  It would be interesting to connect such algebras to the Hecke algebras of a $p$-adic group, e.g., a full depth zero Hecke algebra $C_c^\infty(G_F^+ \backslash G / G_F^+)$ for $F$ a vertex in the building.

The ingredients used to demonstrate perversity (especially the invertibility condition) are not so different from the ingredients used by Moy and Prasad \cite[\S 1, \S 6]{MP2} in order to demonstrate that unrefined minimal K-types are, in fact ``refined''.  Therefore it seems important to determine whether one can functorially associate $G$-equivariant perverse sheaves on $\Build$ to higher-depth representations of $G$.  But a simple approach of taking fixed vectors at higher-depth (e.g., for Moy-Prasad subgroups) cannot by itself lead to a perverse sheaf -- the invertibility condition is not satisfied, nor are K-types refined by such a process.

\bibliographymark{References}

\providecommand{\bysame}{\leavevmode\hbox to3em{\hrulefill}\thinspace}
\providecommand{\arXiv}[1]{\href{https://arxiv.org/abs/#1}{arXiv:#1}}
\providecommand{\MR}{\relax\ifhmode\unskip\space\fi MR }
\providecommand{\MRhref}[2]{%
  \href{http://www.ams.org/mathscinet-getitem?mr=#1}{#2}
}
\providecommand{\href}[2]{#2}

\end{document}